\newtheorem{theorem}{Theorem}
\newtheorem{lemma}[theorem]{Lemma}
\newtheorem{proposition}[theorem]{Proposition}
\newtheorem{corollary}[theorem]{Corollary}
\newtheorem{remark}[theorem]{Remark}
\numberwithin{equation}{section}
\numberwithin{theorem}{section}
\newcommand{\R}{{\mathbb R }}
\newcommand{\C}{{\mathbb C }}
\newcommand{\T}{\mathbb{T}}
\newcommand{\Z}{{\mathbb Z }}
\newcommand{\dbar}{\bar \partial}
\newcommand{\qbar}{\bar q}
\newcommand{\rbar}{\bar r}
\newcommand{\ubar}{\bar u}
\newcommand{\dnuz}{dz}
\newcommand{\ol}{\overline}
\newcommand{\scrT}{\mathscr{T}}
\newcommand{\calB}{\mathcal{B}}
\newcommand{\calF}{\mathcal{F}}
\newcommand{\calM}{\mathcal{M}}
\newcommand{\calS}{\mathcal{S}}
\newcommand{\dotarg}{\, \cdot	\, }
\newcommand{\diff}{\partial}
\newcommand{\bu}{{\bar u}}
\newcommand{\norm}[2][ ]{\left\Vert #2 \right\Vert_{#1}}
\newcommand{\DX}{D\dot{X}}
\begin{document}
	\title[Large data global well-posedness for the mNV system]{Large data global well-posedness for the modified Novikov-Veselov system}
	\author{Adrian Nachman}
	\address{Department of Mathematics, University of Toronto, Toronto ON MS5-2E4, Canada}
    \email{nachman@math.toronto.edu}
	\author{Peter Perry}
	\address{Department of Mathematics, University of Kentucky, Lexington KY 40506-0027, USA}
    \email{pperr0@uky.edu}
	\author{Daniel Tataru}
	\address{Department of Mathematics, University of California, Berkeley, CA 94720, USA}
    \email{tataru@math.berkeley.edu}

\begin{abstract}
The modified Novikov-Veselov system (mNV) is a cubic  third order dispersive evolution in two space dimensions. It is also completely integrable, belonging to the same hierarchy as the defocusing Davey-Stewartson~II (DS II) system. 

The mNV system is $L^2$ critical. Some time ago, Schottdorf proved that for small $L^2$ initial data, the mNV equation is globally well-posed. In this article, we consider instead the large data problem, using inverse scattering methods. Our main result asserts that
the mNV system is globally well-posed for large
$L^2$ data, with the solutions scattering as time goes to $\pm \infty$.  One key
ingredient in the proof, which is of independent interest, is a new nonlinear Gagliardo-Nirenberg inequality  for the associated scattering
transform.

As a byproduct of our main result, we are also able to prove a global well-posedness result for the closely related Novikov-Veselov problem at the critical $\dot H^{-1}+L^1$ level, for a range of data which can heuristically be described as soliton-free. Here we use the associated Miura map to connect the mNV and the NV flows. 
In order to characterize the range of the Miura map, we prove another result of independent interest, namely a sharp, scale invariant form of the  Agmon–Allegretto–Piepenbrink principle in the critical case of two space dimensions.

\end{abstract}

\subjclass{Primary:  	37K15   
Secondary: 35P25  
}
\keywords{inverse scattering, dispersive equations, large data}

\maketitle

\setcounter{tocdepth}{1}    
\tableofcontents

\section{Introduction}
The Novikov-Veselov equation (NV) and the modified Novikov-Veselov equation (mNV) are third order dispersive flows in two space dimensions,
with quadratic, respectively cubic nonlinearities. They are both completely integrable, and one may think of them as two-dimensional counterparts of the KdV, respectively the mKdV flow. In particular, 
there is a Miura map connecting mNV and NV,
allowing one to transfer results between the two flows.

Our primary interest here is in the mNV equation, which belongs to the same integrable hierarchy as the Davey-Stewartson II (DS II) flow, which is second order, still with cubic 
nonlinearity. This is exactly akin to the connection between the mKdV flow and the cubic NLS flow in one space dimension.
In particular both mNV and DS II are $L^2$ critical, so a very natural question is to study their $L^2$ well-posedness. In this context, there is a fundamental difference between the small and the large data problem.

For the DS II problem, small $L^2$ data global well-posedness was proved early on by Ghidaglia-Saut~\cite{GS1990}, using perturbative methods. However, the large data result is highly nonperturbative, and large $L^2$ data global well-posedness was only proved recently in \cite{NRT2020}. Turning our attention to our primary objective here, namely the mNV flow, small $L^2$ data global well-posedness turned out to be more complex 
than in the DS II case, and was proved by Schottdorff~\cite{Schottdorf13}, still using perturbative methods but with respect to more
complex function spaces and estimates. But the large $L^2$ data problem
has remained open until present.

This is the main question we address in this work, where we prove that global well-posedness holds for mNV for large $L^2$ data. Further, our approach also provides a global description of the solutions, as follows (see Theorem~\ref{thm:mNV} for the precise statements):
\begin{itemize}
    \item the solutions satisfy global dispersive bounds, depending only on the $L^2$ size of the data.
\item the solutions satisfy pointwise bounds, depending on the  solutions of a corresponding linear flow. 
    \item the solutions scatter classically at infinity; moreover, the wave operators are 
    global smooth diffeomorphisms of $L^2$.
\end{itemize}

Once we have the sought after mNV result, in the last section of the paper we also explore the consequences for the NV problem, using the 
corresponding Miura map. This yields a global result for initial data in a specified subset of the space $(\dot H^{-1} + L^1)(\R^2)$, which we are able to fully characterize in terms of the spectral properties of a Schr\"{o}dinger operator, which is  the Lax  operator associated to the NV flow.

\subsection{ The NV and mNV flows}
 We denote the coordinates in $\R^2$ by $(x,y)$, and the 
corresponding complex differentiation operators 
are given by 
\[
\diff=\frac12(\diff_x - i \diff_y),  \qquad \dbar = \frac12(\diff_x+i \diff_y).
\]
Then the Cauchy problem for NV 
for a real valued function $q$ in $\R \times \R^2$
is
\begin{equation}
    \label{NV}
\left\{
\begin{aligned}
   & q_t + (\partial^3 + \dbar^3) q =  N_{NV}(q),\\
    \nonumber
    & \left. q \right|_{t=0} = q_0,
 \end{aligned}
\right.
\end{equation}   
where the quadratic nonlinearity is given by
\begin{equation}
    \label{NNV}
    \frac43 N_{NV}(q) = \diff \left( q \dbar^{-1} \diff q  \right) + \dbar \left( \qbar \diff^{-1} \dbar \qbar \right).
\end{equation}
The NV equation is one of a hierarchy of completely integrable, dispersive equations discovered by Novikov and Veselov \cite{NV1986,NV1984}: the scattering transform which linearizes the NV flow is determined by the two-dimensional Schr\"{o}dinger operator $-\Delta + q$ at fixed energy $E$, which is the Lax operator for the hierarchy. Up to trivial rescalings, our equation is the $E=0$ case of the system
\begin{align*}
q_t &= 4 \, \mathrm{Re} \, \left( 4 \diff^3 q + qw + \diff(qw) - E\diff q \right)  \\
\dbar w &= \diff q
\end{align*}
at a fixed energy $E$. 

\bigskip

With similar notations, the Cauchy problem for mNV is 
\begin{equation}
	\label{mNV}
\left\{
\begin{aligned}    
	&	u_t + (\partial^3  + \dbar^3) u =  N_{mNV}(u), 
	\\
    & u(0) = u_0 \in L^2(\R^2)
\end{aligned}
\right.
\end{equation}
and the cubic nonlinearity is given by 
\begin{align}
    \label{NmNV}
	\frac43 N_{mNV}(u) 
		&=  u \dbar^{-1} (\diff(\ubar \diff u)) 
                +  \diff u \cdot \dbar^{-1} (\diff (|u|^2)) \\
        \nonumber
            &\quad + u \diff^{-1}(\dbar(\ubar \dbar u)) 
                +  (\dbar u) \cdot \diff^{-1} (\dbar (|u|^2))
\end{align}

The mNV equation was introduced by Bogdanov \cite{Bogdanov1987} as a counterpart to the Novikov-Veselov equation in analogy to the mKdV equation as a counterpart to the KdV equation.  Our equation agrees with Bogdanov's up to a $-$ sign and a scale factor in the nonlinearity. The formula given here corrects an error in Perry's paper \cite{Perry2014}, where an incorrect formula was given in equation (1.10) for the nonlinearity in the mNV equation. Details of the correct computation are given in \cite{Perry2025}.
The sign difference in the nonlinearity, compared with Bogdanov,
corresponds to a $\pi/2$ rotation of the coordinates in $\R^2$.

 Like their one-dimensional counterparts, 
 the NV and mNV equations
 are related by a Miura transform
\begin{equation}
    \label{Miura.NV}
    \calM(u) = 2 \diff u + |u|^2.
\end{equation}
The Miura transform \eqref{Miura.NV} maps solutions of mNV 
which satisfy the algebraic condition
\begin{equation}\label{u-constraint}
\diff u = \overline{\diff u}
\end{equation}
to solutions of NV, much as the one-dimensional Miura transform $\calM_1(u) = 2u_x + u^2$ maps solutions of mKdV to solutions of KdV. As we will see, the Miura transform \eqref{Miura.NV} maps Cauchy data for \eqref{mNV} to Cauchy data $q=\calM(u)$ for the NV equation for which $-\Delta+q \geq 0$, suitably interpreted, see Section~\ref{sec:nv}.

Another key part of our story is the fact that the mNV equation \eqref{mNV} is a completely integrable dispersive equation in the hierarchy of the Davey-Stewartson II (DS II) equation, 
which has the form 
\begin{equation}
	\label{DS}
	\left\{
	\begin{aligned}
&		i\diff_t u + (\diff^2 + \dbar^2)u + u(r+ \rbar) = 0\\
&		\dbar r + \diff(|u|^2) =0\\
&		u(0,z) = u_0(z)
	\end{aligned}
	\right.
\end{equation}
meaning that the same scattering transform $\calS$ that linearizes \eqref{DS} also linearizes \eqref{mNV}. We will  recall the definition of $\calS$ in Section~\ref{sec:gns}. This scattering transform can be thought of as a nonlinear Fourier transform, and turns out to satisfy precisely analogous bounds (see Perry \cite{Perry2016}, Brown, Ott, and Perry \cite{BOP2016}, Nachman, Regev, and Tataru \cite{NRT2020}, and earlier references given there).  In particular, the Plancherel theorem and a novel pointwise bound for $\calS$ was proved in \cite{NRT2020}, and used to obtain global existence for solutions to \eqref{DS}. Here, as we will see, for global solvability of mNV we will additionally need an analogue of the Gagliardo-Nirenberg inequality for $\calS$ (see Theorems \ref{t:nonlinGNS} and \ref{t:xtGNS}).

\subsection{ Scaling and criticality}

The mNV equation is invariant with respect to the scaling law 
\[
u(x,y,t) \to \lambda u(\lambda x,\lambda y,\lambda^3 t).
\]
By contrast, the scaling law for DS II is 
\[
u(x,y,t) \to \lambda u(\lambda x,\lambda y,\lambda^2 t).
\]
While different overall, these scaling laws coincide at fixed time, which is consistent
with the fact that they belong to the same 
integrable hierarchy. The Sobolev space 
for the initial data whose norm is invariant 
with respect to this scaling is $L^2(\R^2)$,
which is why we call both of these problems \emph{$L^2$ critical}. This indicates that global
$L^2$ well-posedness  for both of these problems
would be optimal, if possible.

One immediate difficulty which arises in the study of $L^2$ solutions for both mNV and DS II 
is that the nonlinearity is not even well-defined at fixed time for $q \in L^2$. Hence in order to properly interpret the equations
one needs to work in a suitable space-time setting, taking advantage of dispersive type 
norms, such as Strichartz spaces.  This is just borderline for DS II, but much worse in the mNV case. In connection to this issue, we remark that the fact that the two time scalings above are different suggests that the classes of space-time dispersive bounds 
(i.e. Strichartz estimates) for the solutions to mNV, respectively DS II should be different.

A common feature of both problems is the range of Strichartz exponents, which corresponds more generally to two-dimensional 
dispersive flows,
\begin{equation}
	\label{Strichartz.pr}
	\frac{1}{p}+\frac{1}{r} = \frac12, \qquad 2 < p \leq \infty.
\end{equation}
We call such exponents admissible.

However, the number of derivatives differs, due to the different orders in the spatial operators.
For the linear homogeneous DS evolution we have the linear Strichartz estimate 
\begin{equation}
	\label{Strichartz-DS}
			\norm[L^p_t L^r_x]{ u} 
					\lesssim \|u_0\|_{L^2}
\end{equation}
for all admissible pairs $(p,r)$. On the other hand, for the linear homogeneous NV (or mNV) evolution,  we have 
(\cite[Proposition 6.4]{Schottdorf13})
\begin{equation}
	\label{Strichartz-NV}
			\norm[L^p_t L^r_x]{|D|^\frac1p u} 
					\lesssim \|u_0\|_{L^2}
\end{equation}
For the above space we introduce the notation  $S^p$,
\begin{equation}
	\label{Sp}
		\norm[S^p]{u} = \norm[L^p_tL^r_z]{|D|^\frac1p u}, \qquad \frac1p + \frac1r = \frac12, \quad p>2.
\end{equation}

Moving on to the NV system, its scaling law is 
\[
q(x,y,t) \to \lambda^2 q(\lambda x,\lambda y,\lambda^3 t).
\]
Formally this corresponds to the initial data
space $\dot H^{-1}(\R^2)$. However, this space
appears to be too narrow for a well-posedness theory, and in particular would require a formal cancellation condition for the integral of $q$. u
Instead, the Miura map \eqref{Miura.NV} indicates
that a better space should be $\dot H^{-1}(\R^2)+ L^1(\R^2)$. This discussion is expanded in the last section of the paper. The corresponding Strichartz norms are one derivative lower than $S^p$, and are denoted by $DS^p$.

\subsection{ The main theorem }

We begin by reviewing the state of the art 
prior to the present work, beginning with the 
DS-II model. The definitive global well-posedness   result for the DS-II problem, proved in \cite{NRT2020}, is as follows:

\begin{theorem}[Nachman-Regev-Tataru~\protect{\cite[Theorems 1.4 and 1.6]{NRT2020}}]
 \label{thm:NRT}
\ The DS-II \\ problem \eqref{DS} is globally well-posed for 
large initial data $u_0 \in L^2$, with the solution depending (locally)  smoothly on the initial data, and satisfying a global Strichartz bound 
\begin{equation}\label{DS-L4}
\|u\|_{L^4_{xt}} \lesssim    C(\norm[L^2]{u_0}). 
\end{equation}
\end{theorem}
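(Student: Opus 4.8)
The plan is to run the inverse-scattering program: use the scattering transform $\calS$ of Section~\ref{sec:gns} --- the ``nonlinear Fourier transform'' linearizing \eqref{DS} --- to conjugate the large-data Cauchy problem to an explicit linear flow, so that the only genuinely nonperturbative work is in controlling $\calS$ and its inverse on large $L^2$ data. I would organize it in four steps.

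\emph{Step 1: mapping properties of $\calS$.} Recall that $\calS u$ is read off from the asymptotics of the complex-geometric-optics solutions $\mu(\cdot,k)$ of the Dirac--Beals--Coifman system with potential $u$, and that for the defocusing DS II this system has no eigenvalues or exceptional points. First one needs the nonlinear Plancherel identity $\norm[L^2]{\calS u} = \norm[L^2]{u}$, together with the fact that $\calS$ is a smooth, essentially involutive, diffeomorphism of $L^2(\R^2)$ onto itself. The decisive new ingredient --- and the step I expect to be the main obstacle --- is a \emph{pointwise} bound for the scattering transform of the schematic form
\[
\bigl| \calS u(k) \bigr| \;\lesssim\; M\bigl[\,\widehat{\bar u}\,\bigr](k),
\]
with $M$ a Hardy--Littlewood-type maximal function, valid for all $u \in L^2$ (up to harmless factors depending only on $\norm[L^2]{u}$). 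Proving this is where the large-data difficulty concentrates: the $\dbar$-equation for $\mu(\cdot,k)$ cannot be treated perturbatively once $\norm[L^2]{u}$ is large, so one must exploit the off-diagonal sign structure of the defocusing Dirac system to obtain a priori bounds for $\mu$ in suitable weighted $L^p$ spaces, uniformly in $k$, and only then extract the pointwise estimate for $\calS u$.

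\emph{Step 2: linearization of the flow.} Next one verifies --- first for Schwartz data, by a direct computation with the Lax pair, then for general $L^2$ data by the continuity supplied in Step 1 --- that $\calS$ conjugates \eqref{DS} to the explicit linear evolution, $\calS u(t)(k) = e^{it(k^2 + \kbar^2)}\,\calS u_0(k)$ up to normalization. Combined with the Plancherel identity this reproves conservation of $\norm[L^2]{u(t)} = \norm[L^2]{u_0}$ and shows that the scattering data retains a fixed $L^2$ profile for all time.

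\emph{Step 3: local theory, and Step 4: globalization.} A standard contraction argument built on the linear Strichartz estimate \eqref{Strichartz-DS} gives local well-posedness in $L^2$, with the solution map smooth on small time intervals and a blow-up alternative governed by a Strichartz norm, say $\norm[L^4_{xt}]{u}$. To globalize, observe that by Step 1 the function $e^{it(k^2+\kbar^2)}\calS u_0$ lies in the range of $\calS$ for every $t$, so $u(t) := \calS^{-1}\bigl(e^{it(k^2+\kbar^2)}\calS u_0\bigr)$ is defined; by Step 2 it coincides with the local solution. Applying the pointwise bound of Step 1 to $\calS^{-1}$ (which has the same structure), $|u(t,z)|$ is dominated by a maximal function of a free DS II solution whose data has $L^2$ norm $\norm[L^2]{u_0}$; hence, by the $L^4$-boundedness of $M$, Minkowski's inequality, and the linear Strichartz bound \eqref{Strichartz-DS},
\[
\norm[L^4_{xt}]{u} \;\lesssim\; \norm[L^2]{\calS u_0} \;=\; \norm[L^2]{u_0}.
\]
This global finiteness of the Strichartz norm defeats the blow-up alternative, so the solution is global, and \eqref{DS-L4} follows with $C(\norm[L^2]{u_0})$. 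Finally, differentiating the representation $u = \calS^{-1}\circ(\text{linear flow})\circ\calS$ and using that $\calS^{\pm 1}$ are smooth on $L^2$ with differentials obeying the same transfer estimates, while the linear flow is a bounded (hence smooth) operator, yields the asserted local smooth dependence of $u$ on $u_0$, in the Strichartz topology as well as in $C_t L^2_x$.
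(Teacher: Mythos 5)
This theorem is imported from \cite{NRT2020} rather than proved in the paper, but your four-step plan --- nonlinear Plancherel and the pointwise maximal-function bound for $\calS$, linearization of the flow by $\calS$, a local contraction in Strichartz spaces, and globalization by transferring the linear $L^4_{xt}$ bound through $\calS^{-1}$ --- is exactly the strategy of that reference as the paper itself summarizes it (Theorem~\ref{thm:NRT-S}, the pointwise bound \eqref{point-calS}, and the chain of estimates following \eqref{L4}). Your proposal is correct and takes essentially the same approach.
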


This followed a much earlier 
small  $L^2$ data result of Ghidaglia-Saut~\cite[Theorem 2.1]{GS1990}, which was obtained by treating the nonlinearity perturbatively in Strichartz spaces. 

What is of interest to us here is not so much the result itself but the method of proof, which uses 
the scattering transform $\calS$ associated to DS-II in order to prove the critical $L^4$ 
Strichartz bound in the above theorem.
The scattering transform has the key property that 
it diagonalizes the DS-II flow. Precisely, 
$u$ solves the nonlinear DS-II problem if and only if 
 $\calS u$ solves the  linear equation \eqref{S_DSflow}.
A detailed definition of $\calS$ is provided in Section~\ref{sec:gns}.

Given the above property, and the fact that $\calS$ is an involution,  the large data $L^2$ well-posedness of  DS-II can be obtained as a consequence 
of properties of $\calS$, some of which we recall:
\begin{theorem}[Nachman-Regev-Tataru~\protect{\cite[Theorem 1.2]{NRT2020}}]
 \label{thm:NRT-S}
The scattering transform $\calS$ is a global smooth diffeomorphism on $L^2$, with $\mathcal S^{-1}=\mathcal S$, which also satisfies the scale invariant $L^4$ 
bound
\begin{equation}\label{S-L4}
\| \calS u\|_{L^4} \lesssim  C(\|u\|_{L^2}) \| \hat u\|_{L^4}  
\end{equation}we
whenever $\hat u \in L^2 \cap L^4$.
\end{theorem}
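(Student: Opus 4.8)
The transform $\mathcal{S}$ is constructed by solving, for each spectral parameter $k\in\C$, the $\bar\partial$ (Dirac-type) system
\[
\bar\partial_z \mu(\cdot,k) = e_{-k}\, u\, \overline{\mu(\cdot,k)}, \qquad \mu(\cdot,k)-1 \in L^p_z,\quad 2<p<\infty,
\]
with $e_k(z)=e^{i(kz+\bar k\bar z)}$, and setting $\mathcal{S}u(k)=\tfrac{1}{\pi i}\int_\C e_{-k}(z)\,u(z)\,\overline{\mu(z,k)}\,dA(z)$; when $\mu\equiv 1$ this is the linear Fourier transform of $u$ up to a fixed constant and a reflection. The first point is structural: because of the defocusing (real-linear, positivity-preserving) character of the system, the integral operator $v\mapsto \bar\partial_z^{-1}(e_{-k}\,u\,\overline v)$ on $L^p_z$ has trivial fixed-point set for every $u\in L^2$---there are no exceptional points---so the Fredholm alternative produces a unique $\mu(\cdot,k)$ for \emph{all} $u\in L^2$, with a uniform bound $\|\mu(\cdot,k)\|_{L^\infty_z}+\|\mu(\cdot,k)-1\|_{L^p_z}\le C(\|u\|_{L^2})$ independent of $k$. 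Granting this, the involution identity $\mathcal{S}^2=\mathrm{Id}$ and the Plancherel identity $\|\mathcal{S}u\|_{L^2}=\|u\|_{L^2}$ follow from the symmetry of the system by a $\bar\partial$-computation, as in earlier work, and together they show $\mathcal{S}$ is a bijection of $L^2$. Smoothness of $\mathcal{S}$, hence of $\mathcal{S}^{-1}=\mathcal{S}$, is then soft: the defining equation is affine in $(\mu,\overline\mu)$ and polynomial in $u$, the linearization $\mathrm{Id}-\bar\partial_z^{-1}(e_{-k}\,u\,\overline{\,\cdot\,})$ is invertible precisely by the no-exceptional-points property, so the analytic implicit function theorem gives that $u\mapsto\mu\mapsto\mathcal{S}u$ is real-analytic from $L^2$ to $L^2$. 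Thus the diffeomorphism and involution assertions reduce to the structural facts above, and the real content is the scale-invariant $L^4$ bound \eqref{S-L4}.

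For \emph{small} data this is perturbative. Expand $\mu=\sum_{n\ge 0}\mu_n$ with $\mu_0\equiv 1$ and $\mu_{n+1}=\bar\partial_z^{-1}(e_{-k}\,u\,\overline{\mu_n})$, so that $\mathcal{S}u=\sum_{n\ge 0}c_n$ with $c_0=\hat u$ up to the fixed normalization and $c_n$ an explicit $(n+1)$-linear form in $u$. One then needs multilinear estimates $\|c_n\|_{L^4_k}\lesssim \delta^{\,n}\|\hat u\|_{L^4}$ whenever $\|u\|_{L^2}\le\delta$; these follow from the mapping properties of $\bar\partial_z^{-1}$ composed with multiplication by the unit-modulus oscillatory factor $e_{-k}$, which is the two-dimensional analogue of the bilinear/Strichartz estimates for the Schr\"{o}dinger flow, the oscillation supplying a transversality gain. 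Summing the geometric series gives \eqref{S-L4} with $C$ uniformly bounded on $\{\|u\|_{L^2}\le\eps_0\}$.

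The main obstacle is the \emph{large} data regime, where the Neumann series diverges and a genuinely nonperturbative a priori estimate is needed. The target is a pointwise bound comparing $\mathcal{S}u$ to $\hat u$, schematically
\[
|\mathcal{S}u(k)| \le C(\|u\|_{L^2})\,\bigl(M\hat u\bigr)(k)\qquad\text{for a.e. }k,
\]
with $M$ a Hardy--Littlewood-type maximal operator; then \eqref{S-L4} follows from the $L^4$-boundedness of $M$, and the $L^2$ Plancherel identity reappears as a consistency check. Establishing such a bound is the crux and cannot be done term by term: it must exploit the completely integrable structure. The ingredients I would use are (i) the uniform-in-$k$ a priori bound $\|\mu(\cdot,k)\|_{L^\infty_z}\le C(\|u\|_{L^2})$ coming from the defocusing positivity; (ii) the \emph{dual} $\bar\partial$-equation satisfied by $\mu$ in the spectral variable, $\bar\partial_k \mu(z,k)= c\,\overline{\mathcal{S}u(k)}\,e_k(z)\,\overline{\mu(z,k)}$, which encodes $\mathcal{S}^2=\mathrm{Id}$ and makes the roles of $(z,u)$ and $(k,\mathcal{S}u)$ symmetric; and (iii) a Cauchy--Pompeiu, dyadic-in-frequency decomposition of $u$, controlling the interaction of $\mu$ with each frequency piece via (i) and resumming via (ii). The defocusing sign is essential throughout---in the focusing case exceptional points (solitons) occur and the statement is false. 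Finally, one can organize the passage from small to large data as a continuity argument in the parameter $\|u\|_{L^2}$: the set of radii for which \eqref{S-L4} holds with a locally bounded constant is open by perturbation off the small-data range and closed by the a priori estimate, and the resulting constant $C(\|u\|_{L^2})$ genuinely grows, reflecting the nonperturbative nature of the theorem.
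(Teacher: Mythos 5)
First, note that the paper does not prove this statement at all: Theorem~\ref{thm:NRT-S} is quoted verbatim from \cite[Theorem 1.2]{NRT2020} and used as a black box, so there is no internal proof to compare against. What you have written is a sketch of how the proof in \cite{NRT2020} is organized, and while you correctly identify the architecture (Jost functions, involution and Plancherel via the dual $\bar\partial$-equation in $k$, a perturbative small-data regime, and a pointwise maximal-function bound as the nonperturbative crux), the sketch has genuine gaps at precisely the hard points, so it does not constitute a proof.

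The most concrete problem is your claimed uniform bound $\|\mu(\cdot,k)\|_{L^\infty_z}\le C(\|u\|_{L^2})$. For $u$ merely in $L^2$ this is not available: the Jost functions satisfy $m_\pm(\cdot,k)-1\in L^4_z$, and the sharp estimates (recalled in Lemma~\ref{l:m} of this paper) are $\|m^\pm(\cdot,k)-1\|_{L^p}\lesssim_{\|u\|_{L^2}}(M\hat u(k))^{2/p}$ only for $4\le p<\infty$; the endpoint $p=\infty$ fails, and the bounds are not uniform in $k$ but depend on $M\hat u(k)$. Since your ingredient (i) for the large-data pointwise bound is exactly this unavailable $L^\infty$ estimate, the crux of your argument rests on a false premise. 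Second, your appeal to the Fredholm alternative to rule out exceptional points is not legitimate here: for $u\in L^2$ the operator $v\mapsto\bar\partial^{-1}(e_{-k}u\bar v)$ is not compact on $L^4$, and the unique solvability of the Jost system is itself a deep nonperturbative theorem in \cite{NRT2020} (the invertibility $\|L_u^{-1}f\|_{\dot H^{1/2}}\lesssim C(\|u\|_{L^2})\|f\|_{\dot H^{-1/2}}$, recalled as Theorem~\ref{t:Lq}), proved by concentration-compactness rather than soft functional analysis. Third, your open-closed continuity argument in $\|u\|_{L^2}$ is circular as stated: closedness of the set of radii for which \eqref{S-L4} holds requires exactly the a priori estimate you are trying to establish. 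In \cite{NRT2020} the actual route is to treat $\calS$ as a pseudodifferential operator with a nonsmooth amplitude built from the Jost functions and to prove the pointwise bound \eqref{point-calS} by a Littlewood--Paley analysis of that amplitude, with all the nonperturbative input concentrated in the $L_u^{-1}$ theorem; none of that is reproduced or replaced in your proposal.
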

In effect \cite{NRT2020} establishes a more accurate pointwise bound, which has the form
\begin{equation}\label{point-calS}
 |\calS u(k)| \leq C(\norm[L^2]{u_0})M\hat u(k),  
\end{equation}
where $M$ denotes the Hardy-Littlewood maximal function. For the Fourier transform $\hat u(k)$ we use the normalization \eqref{FT} (see
Section~\ref{sec:gns}).

\bigskip

Turning our attention to our main target, namely the mNV system, our starting point is the small $L^2$ data result
stated\footnote{His result applies in effect not directly to mNV but to a related model problem, see Remark~\ref{r:tobias}.} in Schottdorf \protect{\cite[Theorem 6.1]{Schottdorf13}}
:

\begin{theorem}
	 \label{thm:Schottdorf}
	The mNV system \eqref{mNV} is globally well-posed and scatters for small initial data in $L^2(\R^2)$.
\end{theorem}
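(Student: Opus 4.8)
The plan is a perturbative fixed point argument, dictated by the fact that \eqref{mNV} is $L^2$-critical with a \emph{derivative} cubic nonlinearity --- schematically $N_{mNV}(u)\sim\diff(u^3)$, see below --- and by the fact that $N_{mNV}(u)$ is not even meaningful at fixed time for $u\in L^2$, so the whole argument must take place in a space-time resolution space $X$ that is dispersive enough both to give meaning to, and to absorb, the lost derivative. I would start from the Duhamel formula
\[
u(t)=e^{-t(\diff^3+\dbar^3)}u_0-\int_0^t e^{-(t-s)(\diff^3+\dbar^3)}N_{mNV}(u)(s)\,ds,
\]
and solve it by contraction on a small ball of $X$, obtaining $\norm[X]{u}\lesssim\norm[L^2]{u_0}$ once $\norm[L^2]{u_0}$ is small; uniqueness then holds in $X$, the Duhamel map is polynomial so smooth dependence on $u_0$ comes for free, and global well-posedness is immediate.

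For the space $X$ I would take the intersection of: the full scale of Strichartz norms $\norm[S^p]{\,\cdot\,}$ from \eqref{Strichartz-NV}--\eqref{Sp} together with $\norm[L^\infty_tL^2_x]{\,\cdot\,}$; a Kato-type local smoothing norm carrying a \emph{full} derivative, of the form $\sup_\ell\norm[L^\infty_\ell L^2_t]{|D|\,\cdot\,}$ over lines $\ell$; and, if needed for pointwise-in-time products, a maximal-function norm $\norm[L^r_xL^\infty_t]{|D|^\sigma\,\cdot\,}$ for suitable $r,\sigma$. (An equivalent route replaces these by $U^p$/$V^p$ spaces adapted to the symbol $\diff^3+\dbar^3$.) The needed linear estimates --- Strichartz \eqref{Strichartz-NV} and the local-smoothing/maximal-function bounds for $e^{-t(\diff^3+\dbar^3)}$ --- follow from stationary phase and $TT^*$ for this two-dimensional third-order symbol; the bound on the retarded integral then follows from the homogeneous estimates plus the Christ--Kiselev lemma, the relevant Strichartz exponents being off-diagonal.

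The crux is the trilinear bound $\norm[Y]{N_{mNV}(u)}\lesssim\norm[X]{u}^3$ for a suitable predual $Y$ (a sum of dual-Strichartz and $L^1_xL^2_t$-type norms). Here I would first observe that $\dbar^{-1}\diff$ and $\diff^{-1}\dbar$ are Calder\'on--Zygmund multipliers of order zero (the Beurling--Ahlfors transform and its conjugate), bounded on every $L^r$, $1<r<\infty$, so that each of the four terms of \eqref{NmNV} has the schematic form $u\cdot R(\bar u\,\diff u)$ or $(\diff u)\cdot R(|u|^2)$ with $R$ of order zero: one derivative on a cubic. That derivative is routed onto the factor measured in the local-smoothing norm, which recovers it in full, while the remaining two factors go into the Strichartz and maximal-function norms by H\"older in mixed Lebesgue spaces; the failure of $R$ to commute with multiplication is handled by a paraproduct decomposition, so that in each frequency interaction the derivative can be moved onto a single, appropriately placed factor. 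Multilinearity yields the corresponding Lipschitz bound for $N_{mNV}(u)-N_{mNV}(v)$, closing the contraction.

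Once a global $u\in X$ is in hand, scattering follows by applying the Duhamel estimate on $[T,\infty)$: $\int_T^\infty e^{s(\diff^3+\dbar^3)}N_{mNV}(u)(s)\,ds\to 0$ in $L^2$ as $T\to\infty$, so $e^{t(\diff^3+\dbar^3)}u(t)$ has an $L^2$ limit $u_+$ with $\norm[L^2]{u(t)-e^{-t(\diff^3+\dbar^3)}u_+}\to 0$, and similarly as $t\to-\infty$. I expect the main obstacle to be precisely this trilinear estimate, together with the compatibility of $X$: the Strichartz gain in \eqref{Strichartz-NV} is only $|D|^{1/p}$ with $p>2$, strictly less than a full derivative, so the local-smoothing component of $X$ is indispensable; but one must then verify all at once that $X$ is preserved by the propagator, that the local-smoothing and maximal-function bounds genuinely hold for this rather degenerate two-dimensional third-order symbol, and that after the paraproduct splitting \emph{every} term of \eqref{NmNV} can offload its derivative onto a local-smoothing factor with no $L^2$-subcritical loss. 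Reconciling these three requirements is what makes the small-data mNV theory more delicate than that of DS~II, and is why Schottdorf works through a model problem (see Remark~\ref{r:tobias}).
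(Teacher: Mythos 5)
Your overall strategy --- contraction on the Duhamel formula in a dispersive resolution space, with smooth dependence and scattering falling out of the global space-time bound --- is the right one and is indeed how Schottdorf (and Section~\ref{sec:lwp} of this paper) proceed. You have also correctly identified the central difficulty: one derivative on a cubic nonlinearity, with the Strichartz gain $|D|^{1/p}$, $p>2$, from \eqref{Strichartz-NV} strictly insufficient. However, the concrete function space you build the argument on is not the one that works, and the route you relegate to a parenthesis (``an equivalent route replaces these by $U^p/V^p$ spaces'') is in fact the essential one. Two specific gaps. First, the global local smoothing norm $\sup_\ell \norm[L^\infty_\ell L^2_t]{|D|\,u}$ is \emph{not} controlled by $\norm[L^2]{u_0}$ for the flow $S_{NV}(t)$: for every direction $e$ the symbol $\omega_{NV}(k)=k^3+\bar k^3$ has a cone of frequencies on which the group velocity component $\nabla\omega_{NV}\cdot e$ degenerates, so the full-derivative lateral energy bound \eqref{to-energy} holds only after restricting the data to the cone $C_e$ of \eqref{Ce}; one must decompose into finitely many cones and pair each piece with a compatible direction. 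This conical restriction is structural, not technical, and is absent from your proposal. Second, at the $L^2$-critical level the trilinear estimate does not close in pure mixed Lebesgue norms: the dyadic summation in the paraproduct needs the off-diagonal gain $(\lambda_{\min}/\lambda_{\max})^c$ supplied by the bilinear $U^2_{NV}\times U^2_{NV}\to L^2_{t,x}$ bound \eqref{bi-UU} (with the logarithmic $V^2_{NV}$ variant \eqref{bi-UV}) together with the $\ell^2$ Besov structure of $\dot X$, and the transfer of these bounds to the retarded integral is done through the duality $(DU^2_{NV})'=V^2_{NV}$, not through Christ--Kiselev --- which is unavailable for the diagonal $L^2_t$-against-$L^2_t$ pairings that your local-smoothing component forces.

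A smaller but real issue is the Beurling transform. Its $L^r$-boundedness lets you discard it only when the two frequencies on one side of it are unbalanced, so that it localizes to a single dyadic shell with integrable kernel. In the balanced case $\lambda_1\approx\lambda_2$, $\lambda_3\approx\lambda$ it cannot be commuted past the product, and one needs its boundedness on the mixed lateral norms $L^p_e L^q$ together with the interpolation between ordinary and lateral Strichartz bounds carried out in case~B of the proof of Lemma~\ref{lemma:tri-S}; your ``paraproduct plus move the derivative onto the smoothing factor'' step glosses over precisely this configuration. This is also the point where Schottdorf's own published argument is incomplete (Remark~\ref{r:tobias}): his estimates are carried out only for the model nonlinearity $u^2\diff u$, and the reduction of the full nonlinearity \eqref{NmNV} to that model fails. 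So the correct proof is: contraction in $\dot X=\ell^2 U^2_{NV}$ using \eqref{L2-solve-X} for the linear part and the trilinear bound \eqref{tri-dual} --- proved via \eqref{bi-UU}, \eqref{bi-UV} and, for the Beurling-balanced interactions, the lateral Strichartz spaces $S^p_e$ --- rather than a Kenig--Ponce--Vega style scheme in Lebesgue mixed norms.
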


The proof of this result is still perturbative, 
but it can no longer be done directly using Strichartz spaces, due to an unfavourable balance of derivatives. Instead, Schottdorf's approach relies on the more refined $U^p$ and $V^p$ type spaces associated to the linear mNV flow, see Section~\ref{sec:func} for their definition. This is discussed more extensively in Section~\ref{sec:lwp}.

Our goal here is to study the large data problem, for which a perturbative approach no longer works. Our main result is as follows:

\begin{theorem}
	\label{thm:mNV} 
The modified Novikov-Veselov system is globally well-posed for all initial data $u_0$ in $L^2$, in the following sense:

\begin{description}
    \item[(i) Existence] Given $u_0$ in $L^2$, there exists a global solution 
    \[
    u \in U^2_{NV} L^2(\R^2) \subset C(\R; L^2(\R^2)).
    \]

\item [(ii) Uniqueness] The solution $u$ is unique 
in $U^2_{NV} L^2(\R^2)$.

\item [(iii) Lipschitz dependence] The flow map 
\[
L^2\ni u_0 \longrightarrow u \in U^2_{NV} L^2(\R^2)
\]
is smooth, uniformly on bounded sets in $L^2$.
In particular we have a global bound
\begin{equation}
\| u \|_{ U^2_{NV} L^2} \leq C(\|u_0\|_{L^2})\|u_0\|_{L^2}.   
\end{equation}

\item [(iv) Higher regularity] If in addition $u_0 \in H^s(\R^2)$ for some $s > 0$, then we have $u \in 
U^2_{NV} H^s(\R^2)$, with a global bound
\begin{equation}\label{mNV-U2}
\| u\|_{U^2_{NV} H^s(\R^2)} \leq C(\|u_0\|_{L^2}) \|u_0\|_{H^s(\R^2)}.  
\end{equation}

\item [(v) Scattering] For each $u_0 \in L^2(\R^2)$ there exist $u_\pm \in L^2(\R^2)$, which are expressed in terms of the scattering transform as $u_{\pm} = \widecheck{\calS(u_0)}$, so that
	\begin{equation}
	  \lim_{t \to \pm \infty} \norm[L^2]{u(t) - S_{NV}(t)u_\pm} = 0,   
	\end{equation}
where $S_{NV}(t)$ denotes the corresponding linear flow (see \eqref{mNV.lin}).
 \\
 Furthermore, the wave operators $u_0 \to u_{\pm}$ are global smooth diffeomorphisms of $L^2$.
 \item [(vi) Pointwise bound]
 \begin {equation} \label{ptws-bound}
|u(t,x)| \leq C(\|u_0\|_{L^2})Mu^{\text{lin}}(t,x),
 \end{equation}
where $u^{\text{lin}}(t,.)=S_{NV}(t)u_\pm$, and $M$ is the Hardy-Littlewood maximal function.

\end{description}
\end{theorem}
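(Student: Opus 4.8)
The plan is to reduce the large-data well-posedness for mNV to properties of the scattering transform $\calS$, exactly in the spirit of how \cite{NRT2020} handles DS II, but with the extra difficulty that the function spaces must now be the $U^p_{NV}$, $V^p_{NV}$ spaces adapted to the third-order linear flow $\diff_t + \diff^3 + \dbar^3$ rather than the Strichartz-type spaces used for DS II. The backbone is the identity that $u$ solves mNV if and only if $\calS u$ solves the linear flow on the Fourier side; since $\calS$ is an involution (Theorem \ref{thm:NRT-S}), one can \emph{define} the solution by $u(t) = \calS^{-1}\!\left( e^{it(\cdots)}\calS u_0 \right) = \calS\!\left( \widehat{S_{NV}(t)}\,\widehat{u_0} \right)$ and then work to show that this object genuinely lives in $U^2_{NV}L^2$, is the unique such solution, and enjoys the stated bounds. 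The first step is therefore to set up carefully the definition of $\calS$ (Section~\ref{sec:gns}), record that it diagonalizes the mNV flow and is a smooth involutive diffeomorphism of $L^2$, and note the scale-invariant pointwise estimate $|\calS u(k)| \le C(\|u\|_{L^2})\, M\hat u(k)$ from \eqref{point-calS}.

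The heart of the argument is then the \emph{new} nonlinear Gagliardo--Nirenberg inequality for $\calS$ (Theorems \ref{t:nonlinGNS}, \ref{t:xtGNS}): a bound of the form $\||D|^{1/p}\calS u\|_{L^p_t L^r_x} \lesssim C(\|u\|_{L^2})\, \||D|^{1/p}\hat u\|_{L^p_t L^r_x}$ (in the appropriate space-time setup), i.e.\ $\calS$ is bounded on the $S^p$ spaces of \eqref{Sp} modulo the harmless $L^2$-dependent constant. Granting this, the Strichartz estimate \eqref{Strichartz-NV} for the linear mNV flow transfers to the nonlinear solution, giving the global dispersive bound in (iii), and the $U^2_{NV}$ membership follows by combining the $U^p$/$V^p$ duality machinery (Section~\ref{sec:func}) with the linear $U^2_{NV}L^2 \hookrightarrow C(\R;L^2)$ embedding. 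For (iv), one either propagates $H^s$ regularity directly through the $\calS$ representation using that $\calS$ preserves weighted/higher-regularity spaces, or runs the local theory of Section~\ref{sec:lwp} with $H^s$ data and uses the now-global $L^2$ control to iterate; the key point is that the $H^s$ bound is \emph{linear} in $\|u_0\|_{H^s}$ with constant depending only on $\|u_0\|_{L^2}$, which is precisely what the scattering-transform approach delivers. The pointwise bound (vi) is then immediate: apply $\calS^{-1}=\calS$ to $\widehat{u^{\mathrm{lin}}(t)}$ and invoke \eqref{point-calS}, since $u^{\mathrm{lin}}(t) = S_{NV}(t)u_\pm$ has Fourier transform $e^{it(\cdots)}\widehat{u_\pm} = e^{it(\cdots)}\calS(u_0)$ and $|\calS(\text{that})| \le C\, M(\text{its transform}) = C\, Mu^{\mathrm{lin}}$.

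For uniqueness (ii) and Lipschitz/smooth dependence (iii), I would first establish local well-posedness in $U^2_{NV}L^2$ by a perturbative fixed-point argument in the $U^p$/$V^p$ spaces — this is essentially Schottdorf's small-data scheme of Section~\ref{sec:lwp}, which gives a unique local solution and smooth local flow map on any ball in $L^2$ — and then patch local intervals together using the global a priori bound coming from the scattering transform to rule out blow-up and extend both the solution and its smooth dependence globally, uniformly on bounded sets. Scattering (v) then falls out of the construction: $\calS u_0 = \widehat{u_\pm}$ by definition, so $u(t) - S_{NV}(t)u_\pm = \calS^{-1}(e^{it(\cdots)}\calS u_0) - \calS^{-1}(e^{it(\cdots)}\calS^{-1}u_\pm)$ converges to zero in $L^2$ as $t\to\pm\infty$ by the decay built into the linear flow on the scattering side together with continuity of $\calS$ on $L^2$; that the wave operators $u_0 \mapsto u_\pm = \widecheck{\calS(u_0)}$ are smooth diffeomorphisms of $L^2$ is just a restatement of Theorem \ref{thm:NRT-S}, since $u_0 \mapsto u_\pm$ is the composition of $\calS$ with the (linear, unitary, hence smooth) inverse Fourier transform. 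The main obstacle, and the genuinely new analytic content, is proving the nonlinear Gagliardo--Nirenberg inequality for $\calS$ in the $S^p$ spaces: unlike the $L^4$ bound of \cite{NRT2020}, this requires controlling $|D|^{1/p}\calS u$, i.e.\ a \emph{fractional derivative} of the nonlinear Fourier transform, which forces one to track how $\calS$ interacts with frequency localization and to exploit the $\dbar$-problem structure defining $\calS$ far more delicately than the pointwise maximal-function bound alone permits; everything else is a matter of assembling known $U^p$/$V^p$ technology around this estimate.
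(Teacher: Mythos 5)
Your overall architecture---represent the solution via the scattering transform, transfer linear Strichartz bounds through $\calS$ to obtain a global control norm, and combine this with a large-data local theory in $U^2_{NV}$ to construct, continue, and propagate smooth dependence---is the same as the paper's. However, two of your key steps are stated in a form that either cannot currently be proved or conceals the actual difficulty. The estimate you place at the heart of the argument, namely $\||D|^{1/p}\calS u\|_{L^p_tL^r_x}\lesssim C(\|u\|_{L^2})\,\||D|^{1/p}\hat u\|_{L^p_tL^r_x}$ with the \emph{same} exponents on both sides, is precisely the bound \eqref{W4} (and its mixed-norm counterparts) that the paper explicitly identifies as an open problem. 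What is actually proved (Theorems \ref{t:nonlinGNS} and \ref{t:xtGNS}) is an unbalanced interpolation inequality,
\[
\|\calS u\|_{\ell^2 S^p}\lesssim_{\|u\|_{L^2}} \|\hat u\|_{\ell^2 L^\infty L^2}^{1/2}\,\|\hat u\|_{\ell^2 S^{p_1}}^{1/2},
\]
with $(p_1,r_1)$ a \emph{different} admissible pair chosen so that $(p,r)$ sits midway between $(p_1,r_1)$ and the energy exponent $(\infty,2)$. This weaker statement still closes the global argument, because the right-hand side is controlled by $\|u_0\|_{L^2}$ via linear Strichartz estimates applied to $e^{-it\omega_{NV}(D)}\calS u_0$; but a proof that invokes the balanced $S^p$ bound as written has a gap.

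Second, you describe the local theory as ``essentially Schottdorf's small-data scheme'' followed by patching of time intervals. The obstruction, which the paper singles out, is that the $U^2_{NV}$ norms in Schottdorf's trilinear estimate \eqref{tri} are \emph{not time-divisible}: they cannot be made small by shrinking the interval, so his fixed-point argument yields nothing for large data no matter how finely you subdivide time. The paper's fix is Lemma~\ref{lemma:tri-S}, a modified trilinear estimate in which one factor is measured in the time-divisible $\ell^\infty S^p$ norm; this is what simultaneously produces the large-data local existence, the a priori bound \eqref{uSp}, and the continuation criterion relative to the $S^p$ control norm in Theorem~\ref{thm:mNV.global}(b), and it is the hinge connecting the local theory to the global $S^p$ bound furnished by the scattering transform. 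Without it the patching step does not go through. A smaller omission: you should also justify why the inverse-scattering solution and the PDE solution coincide; the paper does this by approximating $L^2$ data with Schwartz data, for which both constructions agree, and passing to the limit on $[0,T]$ for $T$ below the maximal time, so that the $T$-independent scattering bound then forces $T_{max}=\infty$.
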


We remark that, as a consequence of the $U^2$ bound \eqref{mNV-U2}, the solutions also satisfy the expected Strichartz bounds
\begin{equation}
\|u\|_{S^p}\leq C(\|u_0\|_{L^2(\R^2)})\|u\|_{L^2}.
\end{equation}
and in effect a stronger $\ell^2$ Besov version of this.

We also remark that one can easily show that 
$\hat{\calS}$ is a smooth diffeomorphism in all $H^s$ for $s \geq 0$,
as a direct consequence of the DS II results in \cite{NRT2020}.
\subsection{ The Novikov-Veselov equation}

Given  the mNV result in Theorem \ref{thm:mNV},  
we can use the Miura map \eqref{Miura.NV} in order to study the NV equation for initial data in the range\footnote{with a slight abuse of notation as we only consider $L^2$
functions satisfying the constraint \eqref{u-constraint}.}
$\calM(L^2) \subset \dot H^{-1} + L^1$
of the Miura map,  a far larger class of initial data than that considered in \cite{Perry2014}.  
Here we think of $\calM(L^2)$ as a subset of $\dot H^{-1}(\R^2) + L^1$, algebraically and topologically.
We note that the embedding $L^1 \subset \dot H^{-1}$
barely fails, and the $L^1$ component is in effect essential in $\calM(L^2)$. Indeed, if $q = \calM(u)$
with $u \in L^2$ then we have the algebraic identity 
\begin{equation}
\| u\|_{L^2}^2 =  \int_{\R^2} q \, dx    
\end{equation}
where the integral makes sense for all $q \in \dot H^{-1} + L^1$. This immediately shows that within the Miura range the size of $q$ is controlled by its integral,
\begin{equation}
\|q\|_{  \dot H^{-1} + L^1} \lesssim \int q \, dx \ C(\int q \, dx)  \end{equation}
As the Miura map is not surjective, we also seek to describe its range. Heuristically, this range corresponds to the soliton free NV solutions. In Section~\ref{sec:nv} we will prove (see Theorem~\ref{t:AAP}) that a potential $q \in \dot H^{-1} + L^1$ is in the range of $\calM$ if and only if the Schr\"odinger operator $H_q=-\Delta + q$ is nonnegative. This theorem is of independent interest as it yields a sharp novel result of Agmon–Allegretto–Piepenbrink type in a scale invariant homogeneous Sobolev space.
In particular, it will also follow that the condition $H_q=-\Delta + q \geq 0$ is preserved by the NV flow. 
\begin{theorem}
    \label{thm:NV}
The Novikov-Veselov equation is globally well-posed for 
initial data in $\calM(L^2) \subset H^{-1}(\R^2) + L^1(\R^2)$,
in the sense that the data to solution map 
\[
L^2 \cap \calM(L^2) \ni q_0 \to q \in C(\R,L^2\cap \calM(L^2) )
\]
admits a continuous extension 
\begin{equation}
  \calM(L^2) \ni q_0 \to q \in C(\R, \calM(L^2)).  
\end{equation}    

Further, the solutions satisfy global dispersive bounds,
\begin{equation}\label{NV-Str}
\| q\|_{DS^p} \lesssim C(\int q_0\, dx ) \int q_0 \, dx, \qquad 2 <p <\infty,   
\end{equation}
also with continuous dependence on the initial data.
\end{theorem}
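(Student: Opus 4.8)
The plan is to transport the entire mNV theory through the Miura map $\calM(u) = 2\diff u + |u|^2$. First I would set up the algebraic correspondence at the level of initial data: by Theorem~\ref{t:AAP} (to be proved in Section~\ref{sec:nv}), a potential $q_0 \in \dot H^{-1}+L^1$ lies in $\calM(L^2)$ precisely when $H_{q_0} = -\Delta + q_0 \geq 0$, and one recovers $u_0 \in L^2$ (satisfying the constraint \eqref{u-constraint}) with $\|u_0\|_{L^2}^2 = \int q_0\,dx$. Given $q_0 \in \calM(L^2)$, let $u_0 = \calM^{-1}(q_0)$, run the global mNV flow from Theorem~\ref{thm:mNV} to get $u \in U^2_{NV}L^2$, and define $q(t) = \calM(u(t))$. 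The key structural fact to verify is that $\calM$ intertwines the two flows: if $u$ solves mNV and satisfies \eqref{u-constraint}, then $q = \calM(u)$ solves NV. This is a direct (if tedious) computation using \eqref{NmNV}, \eqref{NNV}, and the fact that the constraint $\diff u = \overline{\diff u}$ is propagated by the mNV flow — I would either cite \cite{Perry2025} for the intertwining identity or carry it out by matching the cubic terms after applying $2\diff + |\cdot|^2$.

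Next I would establish the quantitative mapping properties of $\calM$ between the relevant function spaces. On the data side, the identity $\|u_0\|_{L^2}^2 = \int q_0\,dx$ together with the elementary bound $\|\calM(u)\|_{\dot H^{-1}+L^1} \lesssim \|u\|_{L^2}^2 + \|u\|_{L^2}$ (the $2\diff u$ term lands in $\dot H^{-1}$, the $|u|^2$ term in $L^1$) gives the claimed control $\|q_0\|_{\dot H^{-1}+L^1} \lesssim \int q_0\,dx\, C(\int q_0\,dx)$. At the level of space-time norms, I would show $\calM$ maps $S^p$ (or its $\ell^2$ Besov refinement, and $U^2_{NV}L^2$) into $DS^p$: since $DS^p$ is designed to be one derivative below $S^p$, the linear part $2\diff u$ transfers an $S^p$ bound into a $DS^p$ bound directly, while the quadratic part $|u|^2$ is handled by a bilinear Strichartz / Hölder estimate in the NV-adapted spaces, using $2 < p < \infty$ and the admissibility relation \eqref{Strichartz.pr}. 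Combined with the global mNV bound $\|u\|_{U^2_{NV}L^2} \leq C(\|u_0\|_{L^2})\|u_0\|_{L^2} = C(\int q_0\,dx)\sqrt{\int q_0\,dx}$, this yields \eqref{NV-Str}.

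For the continuity and extension statement, I would first note that for $q_0 \in L^2 \cap \calM(L^2)$ the solution $q = \calM(u)$ is genuinely $L^2$-valued and continuous in time (here $u_0 \in H^s$-type regularity from part (iv) of Theorem~\ref{thm:mNV} and Sobolev embedding give $\calM(u) \in C(\R; L^2)$ after checking the $|u|^2$ term lands in $L^2$ when $u$ has a little extra regularity — alternatively work by density). Then I would use the smoothness, uniformly on bounded sets, of the mNV flow map $u_0 \mapsto u$ from Theorem~\ref{thm:mNV}(iii), composed with the continuity of $\calM: L^2 \to \dot H^{-1}+L^1$ and of $\calM^{-1}: \calM(L^2) \to L^2$ (the latter is the content of Theorem~\ref{t:AAP}, which must also supply continuity of the inverse, not just bijectivity), to conclude that $q_0 \mapsto q$ is continuous from $\calM(L^2)$ to $C(\R;\calM(L^2))$ and extends the $L^2 \cap \calM(L^2)$ map.

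The main obstacle I anticipate is twofold. The softer difficulty is the function-space bookkeeping: one must show $\calM$ and $\calM^{-1}$ are continuous (and the flow well-defined) in the exact topology of $\calM(L^2) \subset \dot H^{-1}+L^1$, which is delicate because $L^1 \not\subset \dot H^{-1}$ and the decomposition $q = 2\diff u + |u|^2$ is not canonical — the norm on $\calM(L^2)$ has to be the one inherited as a subset, and controlling $\calM^{-1}$ requires the sharp scale-invariant AAP principle rather than a crude estimate. The genuinely hard step, however, is hidden in Theorem~\ref{t:AAP}: proving that $H_q \geq 0$ is \emph{exactly} the range condition, in the critical two-dimensional homogeneous Sobolev setting where the usual Agmon–Allegretto–Piepenbrink arguments (which produce a positive solution $\psi$ of $H_q\psi = 0$ and set $u = \nabla\psi/\psi$, or rather the complex analogue) are borderline and the positive solution need not have the right integrability for $\nabla\psi/\psi \in L^2$. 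Everything downstream — continuity of $\calM^{-1}$, the invariance of $\{H_q \geq 0\}$ under the NV flow — leans on getting that sharp characterization right.
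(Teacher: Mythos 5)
Your proposal follows essentially the same route as the paper: transport data and solutions through the Miura map using the global mNV theory, obtain continuity from the continuity of $\calM$ and $\calM^{-1}$ (the paper proves the latter separately in Theorem~\ref{T-Minverse} rather than as part of Theorem~\ref{t:AAP}, but the substance is the same), get density by regularizing $u_0$, and derive \eqref{NV-Str} from $\|\partial u\|_{DS^p}\lesssim\|u\|_{S^p}$ plus a bilinear Littlewood--Paley estimate for $\||u|^2\|_{DS^p}$. Your identification of the sharp AAP characterization and the continuity of the inverse Miura map as the genuinely hard inputs matches where the paper places the difficulty.
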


We make the following remark on the continuous dependence on the initial data, comparatively for the mNV and the NV flows.
In the mNV case, our result gives Lipschitz, indeed even analytic dependence of the solution on the initial data, which indicates a semilinear nature 
of the flow in $L^2$. However, in the NV case we only obtain continuous dependence, which stems from the 
mere continuity of the inverse of the Miura map
from $\dot H^{-1}+L^1$ into $L^2$. This may possibly indicate a quasilinear nature of the NV evolution in the critical space.

\subsection{Prior Results}
Historically the NV and the mNV equations have been considered from two essentially disjoint perspectives, (i) the completely integrable approach
and (ii) the nonlinear dispersive pde perspective. It is part of the goal of this paper to bring these two aspects together toward the study of the large data problems.

On the integrable side,  modified Novikov-Veselov equation was introduced by Bogdanov \cite{Bogdanov1987}, and belongs to the integrable hierarchy of the DS II equation. In \cite{Perry2014}, Perry showed that the direct and inverse scattering maps for the DS II equation (as formulated in \cite{Perry2016}) could be used to solve the mNV equation for initial data $u_0 \in S(\R^2)$.

The completely integrable structure of the  NV equation was elucidated by Manakov \cite{Manakov76}. Grinevich \cite{Grinevich2000}, Grinevich and Manakov \cite{GM1986}, and Grinevich and S.\ P.\ Novikov, 
\cite{GN1998a,GN1988b} developed inverse scattering for the NV equation at nonzero energy (see also their comments about the zero-energy case in \cite[Supplement 1.I]{GN1988b} and \cite[\S 7.3]{Grinevich2000} ). 

Inverse scattering for the NV equation at zero energy was studied by Boiti, Leon, Manna and Pempinelli \cite{BMP1987}, Tsai \cite{Tsai1993,Tsai1994}, Lassas, Mueller, and Siltanen \cite{LMS2007}, Lassas, Mueller, Siltanen and Stahel \cite{LMSS2012b,LMSS2012a}, Perry \cite{Perry2014}, and Music-Perry \cite{MP2018}. Perry \cite{Perry2014} exploits the connection between the mNV and NV equations via the Miura map \eqref{Miura.NV} (see Bodganov \cite{Bogdanov1987} and also Dubrovsky-Gramolin \cite{DG2008} ) to construct weak solutions of the NV equation for certain initial data of ``conductivity type''--namely those that arise in the transformation of the divergence form operator $\nabla\cdot(\sigma\nabla)$ (with $\sigma$ modeling an electric conductivity) to a Schr\"{o}dinger operator. Such potentials take the form $q=\Delta \psi/\psi$ for a positive function $\psi$, and also arise in other related problems, such as the inverse problem of Calder\'{o}n. For instance, Nachman \cite{Nachman96} showed that such potentials are precisely those with a non-singular scattering transform, and used this to prove unique reconstruction of the conductivity on a bounded domain in $\R^2$ from the corresponding Dirichlet to Neumann map. 

Perry's approach to the NV equation exploits the fact that the mNV equation belongs to the completely integrable hierarchy of the Davey-Stewartson II equation and draws on Perry's analysis of inverse scattering for the DS II equation for initial data in $H^{1,1}(\R^2)$ \cite{Perry2016}. In a similar spirit, Theorem \ref{thm:mNV} of the current paper exploits the more recent results of \cite{NRT2020}, which analyzes the DS II scattering transform as a nonlinear diffeomorphism on $L^2(\R^2)$ and proves global well-posedness of the DS II equation on $L^2(\R^2)$.

There are also a number of results on existence and uniqueness for the mNV and the NV equation from the PDE point of view. 
Angelopoulos \cite{Angelopoulos16} proved local wellposedness of NV in $H^s(\R^2)$ for $s>\frac12$, and local wellposedness for mNV in $H^s(\R^2)$ for $s>1$. 
More recently, Adams and Gr\"{u}nrock \cite{AG2023} proved local well-posedness for the NV equation on $\T^2$ for initial data in $H^s(\T^2)$, $s > -\frac15$, and for the NV equation on $\R^2$ for initial data in $H^s(\R^2)$, $s> -\frac34$. On the global side, Schottdorf~\cite{Schottdorf13} proved 
small data global well-posedness for mNV in the critical space $L^2$;
as noted earlier, this represents a starting point for the present work, where we prove large data global well-posedness. 
On the other hand for NV there are explicit rational solutions blowing up in finite time \cite{TT}, so a general large data global result cannot hold; instead, here we prove global well-posedness for solutions of NV, but in a specified subset of $\dot{H}^{-1}(\R^2) + L^1(\R^2)$. 

\subsection{ An outline of the paper}

The discussion of our results and of our strategy for the proof naturally begins 
with the scattering transform $\calS$;
for a full definition see Section~\ref{sec:gns}.
This linearizes both the DS II and the mNV flows, as follows: 
\begin{itemize}
    \item If $u$ solves the DS II equation \eqref{DS} then $\calS u$ solves
  \begin{equation}\label{S_DSflow}
  i \partial_t (\calS u)(k) = -\omega_{DS}(k) (\calS u)(k),
  \qquad 
  \omega_{DS}(k) := k^2+\bar k^2       
  \end{equation}  
  
   \item If $u$ solves the mNV equation \eqref{mNV} then $\calS u$ solves
  \begin{equation}\label{S_mNVflow}
  i \partial_t (\calS u)(k) = 
  \omega_{NV}(k) (\calS u)(k),
  \qquad 
  \omega_{NV}(k) := k^3+ \bar k^3       
  \end{equation}    
\end{itemize}
With a scattering transform and its inverse at our disposal, one may formally solve both equations globally by setting 
\begin{equation}\label{DS-calS}
u_{DS}(t) = \calS^{-1} e^{it \omega_{DS}(D)} \calS u_0
\end{equation}
for DS-II, respectively
\begin{equation}\label{mNV-calS}
u_{mNV}(t) = \calS^{-1} e^{-it \omega_{NV}(D)} \calS u_0
\end{equation}
for mNV. 

A fundamental result proved in \cite{NRT2020}
asserts that $\calS$ is a global smooth diffeomorphism in $L^2$. This makes the two 
formulas above \eqref{DS-calS} and \eqref{mNV-calS} meaningful for $L^2$ data. However, it is far from the end of the story, as it does not even show in what sense the functions $u_{DS}$ and $u_{mNV}$ defined above solve the DS II, respectively the mNV equations.

The next clue comes from the small data results, which in both cases are proved perturbatively using the Duhamel formulation of the equations, in well chosen function spaces associated to the corresponding linear 
dispersive flows. In the case of the DS II
flow, the key spacetime norm for solutions is the Strichartz $L^4_{xt}$ norm. Controlling this norm was  achieved in  Theorem~\ref{thm:NRT-S}, which in particular implies
 the 
fixed time bound 
\begin{equation}\label{L4}
\| u \|_{L^4_x} \leq C(\|u\|_{L^2_x}) \| \widehat{\calS u}\|_{L^4_x}    
\end{equation}
which by linear Strichartz estimates implies 
the space-time bound
\[
\begin{aligned}
\| u\|_{L^4_{xt}} \leq & \  C(\|u\|_{L^\infty L^2}) \| \widehat{\calS u}\|_{L^4_{xt}} 
\leq C(\|u_0\|_{L^2}) \| e^{it \omega_{DS}(D)}\widehat{\calS u_0}\|_{L^4_{xt}} 
\\
\leq & \ C(\|u_0\|_{L^2}) \|  u(0)\|_{L^2_{x}}.
\end{aligned}
\]
This in turn suffices in order to justify  the Duhamel formulation of the DS II equation 
in $L^4_{xt}$, with global bounds depending only on the $L^2$ size of the data.

Now we turn our attention to our main subject in this paper, namely the mNV equation. 
Since the dispersion relation is different, 
so are the corresponding Strichartz norms. 
In particular, the $L^4_{xt}$ norm should be replaced by $S^4:=L^4_t \dot W^{\frac14,4}$,
and \eqref{L4} should be replaced by 
\begin{equation}\label{W4}
\| u \|_{\dot W^{\frac14,4}_x} \leq C(\|u\|_{L^2_x}) \| \widehat{\calS u}\|_{\dot W^{\frac14,4}_x}    
\end{equation}
At this point we have arrived at the two main difficulties we are facing in this paper, and which did not exist in the DS II case studied in \cite{NRT2020}:

\begin{enumerate}[label=(\roman*)]
\item We do not know whether \eqref{W4} holds,
as well as its counterparts using other mixed norms of Strichartz type.

\item Even if \eqref{W4} were true, this 
would not suffice in order to close the 
fixed point argument in Schottdorff's thesis
\cite{Schottdorf13}; instead, his argument is carried out in  $U^2_{NV}$ type spaces, 
which seem completely hopeless from the perspective of estimates of the form \eqref{W4}. Furthermore, the $U^2_{NV}$ spaces lack a key property, namely \emph{time divisibility}, which is essential in the small data to large data transition.
\end{enumerate}
Now that it is clear what issues need to be addressed, we are ready to proceed with the 
outline of the paper.

In Section~\ref{sec:func} we begin by setting up our notations. More importantly, we review the linear theory for both the DS II and the NV case, and in particular we define the Strichartz spaces $S^p$ as well as the $U^2_{NV}$ and the $V^2_{NV}$ spaces.

In Section~\ref{sec:lwp} we revisit and provide
a key refinement to Schottdorff's small data argument, which roughly resolves the difficulty (ii) above. We achieve there 
two objectives: 
\begin{itemize}
    \item we first prove a large data local well-posedness result
    \item we show that the $U^2_{NV}$ size of the local solutions is controlled by the 
    $L^2$ data size with implicit constants depending only on its Strichartz $S^p$  
    norm, for well chosen $p$. We will refer 
    to this norm as the \emph{control norm}.
\end{itemize}
In particular, the lifespan of the local solutions is shown to depend only on the 
$S^p$ norm distribution for the solution to the corresponding linear flow. This in turn guarantees that solutions can be continued for as long as their $S^p$ norm remains finite.

In Section~\ref{sec:gns} we turn our attention to the scattering transform, whose 
definition we recall. Our primary objective 
there is to find a suitable replacement 
for the (unknown) bound \eqref{W4}. Our solution is to prove a slightly weaker 
interpolation bound, which we call a nonlinear Gagliardo-Nirenberg inequality. In the context of \eqref{W4},
this has the form
\begin{equation}\label{W4-GNS}
\| u \|_{\dot W^{\frac18,\frac83}_x} \leq C(\|u\|_{L^2_x})\|u\|_{L^2}^\frac12 \| \widehat{\calS u}\|_{\dot W^{\frac14,4}_x}^\frac12,
\end{equation}
though our complete result covers a fuller range of indices. As it turns out, this suffices 
in order to yield global in time bounds 
for our control norm discussed earlier, and thus resolves the difficulty (i) noted above.

Armed with the two ingredients above, in Section~\ref{sec:gwp} we use the time divisibility of $S^p$ norms in order to complete the proof of our main global well-posedness result for the mNV flow in Theorem~\ref{thm:mNV}.

In the final section of the paper, we explore 
the consequences of our mNV result for the NV
flow, by exploiting the Miura map and it inverse. To characterize the range of the Miura map, we provide a proof of a sharp, scale invariant form of the  Agmon–Allegretto–Piepenbrink principle.

\subsection{ Acknowledgements}
The first author was supported by the NSERC Discovery Grant RGPIN-06235
.The second author was supported in part by
a grant from the Simons Foundation (359431, PAP).
The third author was supported by the NSF grant DMS-2054975 as well as by a Simons Investigator grant from the Simons Foundation
 and a Simons Fellowship. The authors thank Idan Regev for his work in the early stages of this project and Barry Simon for helpful correspondence on Agmon-Allegretto-Piepenbrink theory.

\section{Preliminaries}
\label{sec:func}

We denote by $\calF$ the Fourier transform
$$ (\calF f)(\xi) = \int_{\R^2} e^{-ix\cdot \xi} f(x) \, dx. $$
If $\varphi$ is a radial bump function with $\varphi(\xi)=1$ on $|\xi| \leq 1$ and $\varphi(\xi)=0$ for $|\xi| \geq 2$, we set $\psi(\xi)=\varphi(\xi) - \varphi(2\xi)$ and, for dyadic integers $K=2^k$ we set $\psi_k(\xi) = \psi(\xi/2^k)$. The Littlewood-Paley multipliers are the Fourier multipliers $P_k f= \calF^{-1}( \psi_k \calF f)$. We will also use the localizations
	$$P_{\ell \leq \dotarg \leq m} = \sum_{k=\ell}^m P_k $$
	and similar localizations onto a range of $k$'s.

\subsection{Function Spaces}

We denote by $\dot{B}^{s,p}_{q}$ the homogeneous Besov space with norm
\begin{equation}
	\label{Besov}	
		\norm[\dot{B}^{s,p}_{q}]{f} = 
	\left(	\sum_{k \in \Z} (2^{ks}\norm[L^p]{P_k f})^q	\right)^{1/q}
\end{equation}
 where, for each $k \in \Z$,  $P_k$ is a Littlewood-Paley operator smoothly localizing in Fourier space to $2^{k-1} \leq |\xi| \leq 2^{k+1}$.

Motivated by Strichartz estimates for $S_{NV}(t)$, we define a space $S^p$ with norm
$$ \norm[S^p]{u} = \norm[L^p_t L^r_x]{|D|^{1/p}u}, \qquad \frac{1}{p}+\frac{1}{r} = \frac12$$ 
where $D$ is the derivative in the spatial variables only. 

Next we recall the spaces $U^p$ and $V^p$ and the adapted function spaces $U^p_S$ and $V^p_S$ associated to a unitary evolution. The space $V^p$ was introduced by Wiener in \cite{Wiener24}. The dispersive versions of spaces, adapted to the unitary $S$ flow, were first introduced in unpublished work of the last author~\cite{T-unpublished}, in order to have better access to both Strichartz estimates and bilinear $L^2_{t,x}$ bounds,  in particular without losing endpoint estimates; see also Koch-Tataru~\cite{KT2005}, Herr-Tataru-Tzvetkov~\cite{HTT}, Hadac-Herr-Koch \cite{HHK2009,HHK2010} and Candy-Herr \cite{CH2018} for some of the first applications of these spaces. 
	
	If $\calB$ is a Banach space and $I \subseteq \R$ is an open interval, we say that $u:I \to \calB$ is \emph{ruled} if $u$ has left- and right-hand limits at each point $t$ of $I$. If $I=(a,b)$ for $-\infty \leq a < b \leq \infty$, a partition of $I$ is a finite monotone sequence 
	$$ a< t_1 < \ldots < t_N = b. $$
	We denote the set of all partitions by $\scrT$. 
	
	For $1 < p < \infty$ the space $V^p(I)$ consists of ruled functions $v:I \to \calB$ for which the norm
	$$ \norm[V^p]{u} = \sup_{\tau \in \scrT}
			\left( 
				\sum_{j=1}^{N-1} \norm[\calB]{v(t_{j+1})-v({t_j})}^p
			\right)^\frac1p
	$$
	is finite, where by convention $v(b)=0$. 
	
	The space $U^p(I)$ is an atomic space whose $p$-atoms $a$ take the form 
	$$a(t) = \sum_{j=1}^{n-1} \chi_{[t_{j},t_{j+1})}(t) \psi_i $$
	where 
	$$\sum_{i=1}^n \norm[\calB]{\psi_i}^p \leq 1.$$
	$U^p(I)$ consists of functions of the form
	$$ u(t) = \sum_{n=1}^\infty \lambda_n a_n(t), \quad \sum_{n=1}^\infty |\lambda_n| < \infty $$
	where $\lambda_n \in \C$ and each $a_n$ is a $p$-atom. We equip $U^p(I)$ with the norm
	\begin{equation}
		\label{Up.norm}
			\norm[U^p]{u} = 
				\left\{
					\inf 
					\sum_{n=1}^\infty |\lambda_n|:
					u(t) = \sum_{n=1}^\infty \lambda_n a_n(t)
					\text{ where each }a_n \text{ is a $p$-atom}
				\right\}.
	\end{equation}
	We have the continuous embedding $U^p(I) \subset V^p(I)$. 
	
	For $1< p < \infty$, we define the space 
	\begin{equation}
		\label{DUp}
			DU^p = \{u': u \in U^p \},
	\end{equation}
	where $u'$ denotes the distribution derivative of $u(t)$ with respect to $t$, with the induced norm. We have \cite[Theorem B.18]{KT2018}:
	
	\begin{theorem}
		\label{thm:DUp}
		For all distributions $f$,
		\begin{equation}
			\label{DUp.norm}
			\norm[DU^p]{f} = \sup 
				\left\{ 
					\int f \phi \, dt: 
					\norm[V^q]{\phi} \leq 1, 
					\phi \in C_0^\infty(\R,\calB) 
				\right\}.
		\end{equation}
	\end{theorem}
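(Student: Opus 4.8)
The statement is the dual characterization of $DU^p$, where $\tfrac1p+\tfrac1q=1$; I would deduce it from the standard $U^p$/$V^q$ calculus, via its two basic ingredients: the bilinear pairing $B$ on $U^p\times V^q$ obeying $|B(u,v)|\le\|u\|_{U^p}\|v\|_{V^q}$, and the isometry $(U^p)^*\cong V^q$ realized by $B$. The first reduction is an integration by parts: for $u\in U^p$ and $\phi\in C_0^\infty(\R,\calB)$, summation by parts on the Riemann--Stieltjes sums defining $B(u,\phi)$ gives $B(u,\phi)=-\int_{\R}u\,\phi'\,dt$, with no boundary term since $\phi$ has compact support and $U^p$-functions are bounded and regulated. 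Hence whenever $f=\partial_t u$ in $\mathcal{D}'(\R,\calB)$ we have $\int f\phi\,dt=-\int u\,\phi'\,dt=B(u,\phi)$ for all $\phi\in C_0^\infty$. Writing $N$ for the right-hand side of \eqref{DUp.norm}, the bound $N\le\|f\|_{DU^p}$ is immediate: for any $U^p$-primitive $u$ of $f$ and any admissible $\phi$ one has $|\int f\phi\,dt|=|B(u,\phi)|\le\|u\|_{U^p}$, and one takes the supremum over $\phi$ followed by the infimum over $u$.

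For the reverse inequality, assume $N<\infty$. The kernel of $\partial_t$ on $U^p$ is the space of ($\calB$-valued) constants, so $\partial_t$ induces an isometry $U^p/\ker\partial_t\cong DU^p$, and by Hahn--Banach, for any primitive $u$ of $f$,
\[
\|f\|_{DU^p}=\sup\bigl\{\,|L(u)|:L\in(U^p)^*,\ \|L\|\le 1,\ L|_{\ker\partial_t}=0\,\bigr\}.
\]
Under $(U^p)^*\cong V^q$, the admissible functionals $L$ correspond precisely to a closed subspace $V^q_0\subseteq V^q$ — those $v$ which vanish at $\pm\infty$ (equivalently, have equal limits there), a consequence of pairing against constants — with $L(u)=B(u,v)$; and by the identity of the first paragraph, $v\mapsto B(u,v)$ is the unique extension to $V^q_0$ of the pairing $v\mapsto\int f\,v\,dt$ initially defined on $C_0^\infty\subseteq V^q_0$, and it does not depend on the choice of primitive. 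Thus $\|f\|_{DU^p}$ is the supremum of $|\int f\,v\,dt|$ over $v\in V^q_0$ with $\|v\|_{V^q}\le 1$, and the theorem reduces to the assertion that $C_0^\infty$ is \emph{norming} for $f$ inside $V^q_0$. I would prove this by approximation: given $v\in V^q_0$, normalize its limits at $\pm\infty$ to $0$, truncate far out, and regularize in $t$; the normalization and truncation are harmless because $u$ is bounded and regulated, and mollification does not increase $\|\cdot\|_{V^q}$, so it remains to show that $\int f\,v\,dt$ changes by an arbitrarily small amount under the regularization.

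This last step is the crux and the only genuinely non-routine point. It is \emph{not} the case that $C_0^\infty$ is $\|\cdot\|_{V^q}$-dense in $V^q$ (a jump of $v$ survives, at full size, under any smoothing measured in the $V^q$-norm), so the perturbation of $\int f\,v\,dt$ cannot be controlled merely through $|B(u,v-v_{\mathrm{reg}})|\le\|u\|_{U^p}\|v-v_{\mathrm{reg}}\|_{V^q}$, whose right-hand side does not go to $0$. One must instead use the regulated, right-continuous structure of the primitive $u$: a near-extremal $v\in V^q_0$ can be chosen whose jumps are harmless relative to those of $u$, and smoothed only afterwards. Making this precise comes down to careful bookkeeping with the normalization conventions built into $U^p$ and $V^q$ (right-continuity, the behavior at $\pm\infty$, the $v(b)=0$ convention), and is in effect a re-derivation of the relevant part of the Hadac--Herr--Koch / Koch--Tataru duality theory.
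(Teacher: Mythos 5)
The paper does not actually prove this statement --- it is quoted directly from \cite[Theorem B.18]{KT2018} --- so there is no in-paper argument to compare against; your proposal must be judged on its own. Its architecture is the standard one: the easy inequality follows from $|B(u,\phi)|\le\norm[U^p]{u}\norm[V^q]{\phi}$ together with the summation-by-parts identity $B(u,\phi)=-\int\langle u,\phi'\rangle\,dt$, and the hard inequality is reduced, via the isometry $(U^p)^*\cong V^q$, to showing that $C_0^\infty$ is norming. One structural simplification you miss: nonzero constants do \emph{not} belong to $U^p(\R)$ (atoms vanish on $(-\infty,t_1)$ and $U^p$ functions are uniform limits of $\ell^1$ sums of atoms, hence vanish at $-\infty$), so $\partial_t:U^p\to DU^p$ is already an isometric bijection and the whole quotient/Hahn--Banach detour --- including the identification of the annihilator of the constants with a subspace $V^q_0$ of functions with ``equal limits at $\pm\infty$'' --- is unnecessary; the normalizations that actually matter in the duality are right-continuity and the endpoint convention $v(b)=0$, not the condition you describe.

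The genuine gap is the one you flag yourself and then do not close: the norming property of $C_0^\infty$. You correctly observe that $C_0^\infty$ is not $V^q$-dense and that mollification does not converge in $V^q$, so the crude bound $|B(u,v-v_{\mathrm{reg}})|\le\norm[U^p]{u}\norm[V^q]{v-v_{\mathrm{reg}}}$ cannot finish the argument --- but your resolution is only the remark that one must ``use the regulated, right-continuous structure of $u$'' and that the rest is ``bookkeeping''. That step is the entire content of the theorem beyond the abstract duality, so as written the proof is incomplete. A workable completion: from the duality theorem one can take a near-extremizing $v$ which is a \emph{step function} subordinate to a finite partition $\{s_k\}$; then replace each jump of $v$ by a steep smooth transition supported in $[s_k,s_k+\delta]$, i.e.\ to the \emph{right} of $s_k$. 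Smoothing does not increase $\norm[V^q]{v}$, and $-\int\langle u,v_\delta'\rangle\,dt$ converges as $\delta\to0^+$ to $\sum_k\langle u(s_k^+),\,\mathrm{jump}_k(v)\rangle=B(u,v)$ precisely because $u$ is right-continuous; truncation at $\pm\infty$ costs nothing since $u$ vanishes at $-\infty$ and the convention $v(b)=0$ kills the contribution at $+\infty$. Until an argument of this kind is actually written out, the reverse inequality is asserted rather than proved.
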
	
	If $I=\R$, $\calB$ is a Hilbert space and $S(t)$ is a one-parameter unitary group, we define spaces 
		$$U^p_S = \{ u: S(-t)u(t) \in U^p \}$$ and 
		$$V^p_S = \{ v: S(-t)v(t) \in V^p \}$$ 
	with norms
	\begin{align}
		\label{UpS.norm}
		\norm[U^p_S]{u} &= \norm[U^p]{S(-\dotarg) u(\dotarg)}\\	
		\label{VpS.norm}
		\norm[V^p_S]{v}	&=	\norm[V^p]{S(-\dotarg) v(u\dotarg)}.
	\end{align}
	Note that $U^2_S \subset V^2_S$ and we have the continuous embedding
	\begin{equation}
		\label{Ups.VpS}	
			\norm[V^2_S]{u} \lesssim \norm[U^2_S]{u} 
	\end{equation}

\subsection{Strichartz Estimates 
for linear flows}

In this subsection we review the Strichartz and $U^p$/$V^p$ spaces and estimates. For completeness we consider both the DSII case and the $NV$ case, whose associated 
linear inhomogeneous flows have the form
\begin{equation}
	\label{DS-lin-re}
	\left\{
	\begin{aligned}
&		i\diff_t q + (\diff^2 + \dbar^2)q  = f\\
&		q(0) = q_0
	\end{aligned}
	\right.
\end{equation}
respectively 
\begin{equation}
    \label{NV-lin}
\left\{
\begin{aligned}
   & u_t + (\partial^3 + \dbar^3) u =  g,\\
    \nonumber
    & u(0) = u_0.
 \end{aligned}
\right.
\end{equation}   

The range of Strichartz exponents is the same for both
evolutions, corresponding to two-dimensional 
dispersive flows,
\begin{equation}
	\label{Strichartz.pr-re}
	\frac{1}{p}+\frac{1}{r} = \frac12, \qquad 2 < p \leq \infty.
\end{equation}
We call such exponents admissible.
However, the number of derivatives varies, due to the different orders in the spatial operators.

For the linear evolution \eqref{DS-lin-re} we have the linear Strichartz estimate 
\begin{equation}
	\label{Strichartz-DS-re}
			\norm[L^p_t L^r_x]{ q} 
					\lesssim \|q_0\|_{L^2}+  \norm[L^{p_1'}L^{q_1'}]{f},
\end{equation}
for all admissible pairs $(p,q)$ and $(p_1,q_1)$.

On the other hand for the linear NV evolution \eqref{NV-lin} we have a different balance of derivatives
\begin{equation}
	\label{Strichartz-NV-re}
			\norm[L^p_t L^r_x]{|D|^\frac1p u} 
					\lesssim \|u_0\|_{L^2}+  \norm[L^{p_1'}L^{q_1'}]{|D|^{-\frac1p} g},
\end{equation}
where, matching the last estimate, we introduce the notation  $S^p$ for the space of functions with the norm on the left.

Just using Strichartz norms does not suffice for our purposes here, instead we also need the associated 
$U^p$ and $V^p$ spaces. We denote the $L^2$ groups 
of isometries for the two flows by $\{S_{DS}(t)\}$, respectively $\{S_{NV}(t)\}$. Correspondingly, we have 
the associated $U^p$ and $V^p$ spaces, for which we use the notations $U^p_{DS}$, $V^p_{DS}$, respectively
$U^p_{NV}$, $V^p_{NV}$. These spaces will be used for the solutions to the two flows. On the other hand for 
the source terms we use the spaces 
\[
DU^2_{NV} = \{  u_t + (\partial^3 + \dbar^3) u; \ u \in U^2_{NV}\}, 
\]
and similarly for DSII. By Theorem~\ref{thm:DUp}, 
these spaces are related to $V^2$ by duality,
\begin{equation}
(DU^2_{NV})' = V^2_{NV}.
\end{equation}
We also make the important observation that 
\[
\| \bu\|_{U^2_{NV}} = \|u\|_{U^2_{NV}},
\]
which is due to the fact that the symbol of the linear NV operator is odd.

In this context, the linear bound for the linear NV flow \eqref{NV-lin} reads as follows:
\begin{equation}\label{L2-solve}
\|u\|_{U^2_{NV}} \lesssim \|u_0\|_{L^2} + \| g\|_{DU^2_{NV}}
\end{equation}    
This will be our main linear bound for the NV
flow.
One can see that this bound is strictly stronger than
the Strichartz bound \eqref{Strichartz-NV-re}, due to the 
embeddings
\begin{equation}\label{UV-to-Sp}
 U^2_{NV} \subset V^2_{NV} \subset S^p, \qquad (S^{p})' \subset DU^2_{NV}.    
\end{equation}

On the above space-time function spaces it will be convenient to superimpose an $\ell^2$ Besov structure.
For clarity we use a slightly nonstandard notation, 
$\ell^2 S^p$, $\ell^2 U^2_{NV}$, etc. For instance
\begin{equation}
    \|u\|_{\ell^2 S^p}^2 : = \sum_k  \|P_k u\|_{ S^p}^2
\end{equation}
and similarly for the other spaces. On occasion we will also replace $\ell^2$ by $\ell^q$ for $1 \leq q \leq \infty$. In particular we will use the shortcuts $\dot X$, $\dot Y$ for the 
closures of $\calS(\R \times \R^2)$ in the norms
$\ell^2 U^2_{NV}$ respectively  $\ell^2 V^2_{NV}$. 
Then we have
\begin{equation}
	\label{dotXY}
		 \norm[\dot{Y}]{u} \lesssim \norm[\dot{X}]{u}
\end{equation}

The corresponding space of source terms is denoted by $\DX$, and the counterpart of the estimate \eqref{L2-solve} is 
\begin{equation}\label{L2-solve-X}
\|u\|_{\dot X} \lesssim \|u_0\|_{L^2} + \| g\|_{\DX}.
\end{equation}
For higher Sobolev regularity we will use the norms $\dot X^s$, $\dot Y^s$ defined by 
\begin{equation}
\| u\|_{\dot X^s}^2 = \sum_k 2^{2ks} \|P_k u\|_{U^2_{NV}}^2, \qquad \| v\|_{\dot Y^s}^2 = \sum_k 2^{2ks} \|P_k v\|_{V^2_{NV}}^2.
\end{equation}

\subsection{Bilinear \texorpdfstring{$L^2$}{L2} estimates}

Here we consider two frequency localized functions at dyadic frequencies $L$ and $M$.
We first note the straightforward bound
\begin{equation}
	\label{St.bi1}
		\norm[L^2]{P_L (u) P_M (v)} \lesssim L^{-\frac1p} M^{-\frac1r}
			\norm[S^p]{u} \norm[S^r]{v}, \qquad 
            \frac{1}{p} + \frac{1}r = \frac12.
\end{equation}
For solutions to the homogeneous equation \eqref{NV-lin} we have access to all $S^p$ 
norms, so it is natural to choose the indices $p$ and $r$ in order to improve the constant.
However, we can do even better than that, namely the bilinear dyadic $L^2$ bound
\begin{equation}
	\label{bi-hom}
		\norm[L^2(\R \times \R^2)]{S_{NV}(t) u_{0,\lambda}  S_{NV}(t) u_{0,\mu}} \lesssim \mu^\frac12 \lambda^{-1}
			\norm[L^2]{u_{0,\lambda}} \norm[L^2]{u_{0,\mu}}
		\qquad \mu \lesssim \lambda.
\end{equation}
This then easily extends to $U^2_{NV}$ functions,
\begin{equation}
	\label{bi-UU}
		\norm[L^2(\R \times \R^2)]{u_\lambda u_\mu} \lesssim \mu^\frac12 \lambda^{-1}
			\norm[U^2_{NV}]{u_\lambda} \norm[U^2_{NV}]{u_\mu}
		\qquad \mu \lesssim \lambda,
\end{equation}
and will be very useful in the proof of our results. The same also works for $V^2_{NV}$ with a $\log(\lambda/\mu)$ loss
\begin{equation}
	\label{bi-UV}
		\norm[L^2(\R \times \R^2)]{u_\lambda u_\mu} \lesssim	
			\frac{\mu^\frac12}{\lambda} 
				\left( 1 + \log(\lambda/\mu)\right)
				\norm[V^2_{NV}]{u_\lambda} \norm[U^2_{NV}]{u_\mu}
		\qquad \mu \lesssim \lambda.
\end{equation}
and also with $U^2_{NV}$ and $V^2_{NV}$ norms interchanged.

\subsection{Lateral Strichartz norms}

Another viewpoint on bilinear $L^2$ estimates can be 
gained by using lateral Strichartz norms, where the role of the time variable and one spatial variable are interchanged.
To set up the notations, given a unit vector $e$ we 
consider an associated orthonormal frame $(e,e^{\perp})$
in $\R^2$, and corresponding coordinates $(x_e,x_e^\perp)$
given by 
\[
x_e = x \cdot e, \qquad x_e^\perp = x \cdot e^\perp. 
\]
Then for any pair $(p,q)$ of Strichartz exponents we define the lateral mixed norm spaces 
\[
L^p_e L^q : = L^p_{x_e} L^{q}_{t,x_e^\perp}, 
\]
and we ask whether linear NV solutions can be measured in these spaces. The case $p=q = 4$ is trivial, as then the lateral norms 
agree with the standard Strichartz norms. 

The energy norm $(p,q) = (\infty, 2)$ deserves special attention.
Since we cannot interpret the NV equation as an  evolution equation in the $x_\omega$ direction, it follows that we cannot control this norm globally. However, we 
can control it for restricted conical sets of frequencies where 
the group velocity in the $\omega$ 
direction is nontrivial:

\begin{proposition}
Assume that $u_0$ is frequency localized in the region
\begin{equation}\label{Ce}
C_e = \{ \xi \in \R^2; \ |\nabla \omega_{NV} \cdot e| \geq c |\xi|^2\}   
\end{equation}
Then for all Strichartz pairs $(p,q)$ with $4 \leq p \leq \infty$
we have
\begin{equation}\label{to-energy}
\||D|^{1-\frac{3}{p}} S_{NV}(t) u_0\|_{L^p_e L^q} \lesssim \|u_0\|_{L^2}
\end{equation}
\end{proposition}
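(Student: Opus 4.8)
The plan is to reduce the estimate \eqref{to-energy} to a standard $TT^*$/Strichartz argument for a one-dimensional (in $x_e$) evolution, after a Fourier-space decomposition that exploits the nondegeneracy built into $C_e$. First I would note that, by Littlewood–Paley decomposition and the frequency localization of $u_0$ in $C_e$, it suffices to prove the estimate for $u_0$ localized at a single dyadic frequency $|\xi|\sim\lambda$; the factor $|D|^{1-3/p}$ then becomes the constant $\lambda^{1-3/p}$, and by scaling (using the scaling law $u(x,t)\to\lambda u(\lambda x,\lambda^3 t)$ under which $L^2_x$ is invariant and $C_e$ is preserved) one may take $\lambda\sim 1$. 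So the real content is the fixed-frequency bound $\|S_{NV}(t)u_0\|_{L^p_e L^q}\lesssim \|u_0\|_{L^2}$ for $u_0$ with $\widehat{u_0}$ supported in $\{|\xi|\sim 1\}\cap C_e$.

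Next I would set up the lateral perspective. Writing $\xi=(\xi_e,\xi_e^\perp)$ in the frame $(e,e^\perp)$, the linear flow is $e^{it(\partial^3+\bar\partial^3)}$ with symbol $\omega_{NV}(\xi)$, a homogeneous cubic real polynomial. On the support of $\widehat{u_0}$ we have $|\partial_{\xi_e}\omega_{NV}|\gtrsim 1$, so the map $\xi_e\mapsto \omega_{NV}(\xi_e,\xi_e^\perp)$ is a local diffeomorphism for each fixed $\xi_e^\perp$. The idea is to regard $x_e$ as the ``evolution'' variable: after a change of variables in the Fourier integral, $S_{NV}(t)u_0$ as a function of $x_e$, with $(t,x_e^\perp)$ playing the role of the ``spatial'' variables, solves a dispersive-type equation whose symbol has a nondegenerate Hessian in the dual variables to $(t,x_e^\perp)$ — this nondegeneracy is precisely the content of the cone condition combined with homogeneity. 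A stationary-phase estimate on the corresponding oscillatory integral kernel then yields a fixed-time ($x_e$) dispersive decay of the form $t^{-1}$-type in two variables (here the ``two variables'' are $t$ and $x_e^\perp$), i.e. an $L^1_{t,x_e^\perp}\to L^\infty_{t,x_e^\perp}$ bound with the right power of $x_e$. Interpolating this with the trivial $L^2\to L^2$ bound and applying the abstract Keel–Tao Strichartz machinery in the $x_e$ variable produces \eqref{to-energy} for all admissible $(p,q)$ with $4\le p\le\infty$; the lower bound $p\ge 4$ is exactly what the 2D ($t,x_e^\perp$) dispersive decay allows, with $p=\infty$ being the ``energy'' endpoint which here is legitimate because there genuinely is a propagation estimate in $x_e$ on this frequency region.

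The main obstacle I expect is making the change of variables and the stationary phase rigorous and uniform: one must check that on $C_e\cap\{|\xi|\sim 1\}$ the relevant phase $x_e\xi_e + t\,\omega_{NV}(\xi) + x_e^\perp\xi_e^\perp$, after solving $\omega_{NV}(\xi_e,\xi_e^\perp)=\tau$ for $\xi_e=\xi_e(\tau,\xi_e^\perp)$, has nondegenerate Hessian in $(\tau,\xi_e^\perp)$ with uniform bounds — degeneracies could in principle occur at points where the Gaussian curvature of the characteristic surface vanishes, and one has to verify that the cone condition $|\nabla\omega_{NV}\cdot e|\gtrsim|\xi|^2$ rules these out (or at least that the residual degenerate directions are handled by a further dyadic decomposition with summable losses). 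A secondary technical point is that the symbol is only piecewise invertible in $\xi_e$ (the cubic may be three-to-one), so one should decompose $C_e$ into finitely many pieces on each of which $\xi_e\mapsto\omega_{NV}$ is injective, prove the estimate on each piece, and sum; and one must make sure the frequency cutoff to $C_e$ can be chosen smooth and homogeneous so that the associated multiplier is bounded on all the mixed-norm spaces involved. Once these geometric facts are pinned down, the rest is the standard abstract Strichartz argument and causes no trouble.
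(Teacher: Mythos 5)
Your proposal is correct in substance, but it takes a noticeably heavier route than the paper. The paper's argument is two lines: the endpoint $(p,q)=(\infty,2)$ is the lateral energy (local smoothing) estimate, which is immediate from Plancherel in $(t,x_e^\perp)$ after the change of variables $\xi_e\mapsto\tau=\omega_{NV}(\xi)$ — the Jacobian $|\partial_{\xi_e}\omega_{NV}|^{-1}\lesssim c^{-1}|\xi|^{-2}$ on $C_e$ produces exactly the weight $|D|^{1}$ — and the endpoint $(4,4)$ is the already known standard Strichartz bound \eqref{Strichartz-NV} (where lateral and ordinary mixed norms coincide); interpolation then gives the whole range $4\le p\le\infty$. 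You instead propose to prove the full lateral dispersive decay $|x_e|^{-1}$ in $(t,x_e^\perp)$ by stationary phase and run Keel--Tao in the $x_e$ variable. This works: the geometric fact you correctly flag as the main risk does check out, since $\det D^2\omega_{NV}=-144|\xi|^2$ never vanishes away from the origin, so nonvanishing curvature of the characteristic surface together with $|\nabla\omega_{NV}\cdot e|\gtrsim c|\xi|^2$ gives a uniformly nondegenerate Hessian for the inverted phase $\xi_e=\phi(\tau,\xi_e^\perp)$ on each of the finitely many branches; your scaling reduction to $|\xi|\sim1$ and the $\ell^2$ dyadic summation are also fine. Two remarks: (1) your claim that "$p\ge4$ is exactly what the 2D dispersive decay allows" is backwards — Keel--Tao with $\sigma=1$ yields the full non-endpoint admissible range $p>2$, so your argument actually proves more than stated on $C_e$; the restriction to $p\ge4$ in the proposition is only because $2<p\le4$ is covered, without any cone condition, by the subsequent proposition. (2) The price of your approach is all the work you list in your final paragraph (branch decomposition, uniform stationary phase, smooth homogeneous cutoffs bounded on mixed norms), none of which is needed for the paper's interpolation argument; the payoff is that you do not need to quote the $L^4$ Strichartz estimate as a black box.
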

Here we included the full family of estimates which follows by interpolating the lateral energy estimate with the already known $L^4$ bound. The lateral energy estimate, on the other hand, is immediate.

On the other side of $p= 4$, the direction of the group velocity 
no longer plays a role, and the estimate only hinges on having full dispersion; see the corresponding discussion in 
\cite{T-nlw} for the wave equation, respectively \cite{BIKT} for the Schr\"odinger equation. So we have
\begin{proposition}
For all Strichartz pairs $(p,q)$ with $2 < p \leq 4$
we have
\begin{equation}\label{to-Pecher}
\||D|^{1-\frac{3}{p}} S_{NV}(t) u_0\|_{L^p_e L^q} \lesssim \|u_0\|_{L^2}
\end{equation}
\end{proposition}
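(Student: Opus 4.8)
The statement to prove is the last Proposition: for all Strichartz pairs $(p,q)$ with $2 < p \leq 4$, one has
\[
\||D|^{1-\frac{3}{p}} S_{NV}(t) u_0\|_{L^p_e L^q} \lesssim \|u_0\|_{L^2}.
\]
The plan is to reduce to a single frequency-localized dispersive estimate plus an interpolation/orthogonality argument, in the spirit of the Keel--Tao machinery but carried out in the lateral variables $(x_e, t, x_e^\perp)$. First I would fix the unit vector $e$ and rename coordinates so that $x_e = x_1$, $x_e^\perp = x_2$; by rotational symmetry of the symbol $\omega_{NV}(\xi) = \xi_1^3 - 3\xi_1\xi_2^2$ up to the choice of frame (the symbol is a harmonic cubic, invariant under $\pi/2$ rotations in a suitable sense), it suffices to treat one such $e$. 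The key reduction is that $L^p_e L^q = L^p_{x_1} L^q_{t, x_2}$ is an $L^p$ in $x_1$ of an $L^q$ in the two "transverse" variables $(t,x_2)$, and the linear NV propagator, restricted to a dyadic frequency shell $|\xi| \sim \lambda$, is given by an oscillatory integral in all three of $(x_1, t, x_2)$ with phase $x_1\xi_1 + x_2\xi_2 + t(\xi_1^3 - 3\xi_1\xi_2^2)$.

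Second, I would prove the $TT^*$ dispersive decay estimate at a fixed dyadic frequency. Localize $u_0$ to $|\xi| \sim \lambda$; then $S_{NV}(t)P_\lambda$ has Schwartz kernel
\[
K_\lambda(x_1,x_2,t) = \int_{\R^2} e^{i(x_1\xi_1 + x_2\xi_2 + t(\xi_1^3-3\xi_1\xi_2^2))}\,\psi(\xi/\lambda)\,d\xi,
\]
and I want the estimate $\|K_\lambda(\cdot, \cdot, t)\|_{L^\infty_{x_1}L^\infty_{x_2}} \lesssim$ (something decaying in $t$) together with an $L^1\to L^\infty$-type bound in the lateral sense, i.e. bounds on the $(x_2,t)$-to-$(x_2,t)$ dispersion uniformly in $x_1$. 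The clean way is stationary phase: for the map $\xi \mapsto (x_2\xi_2 + t\,\omega_{NV}(\xi))$ with $x_1$ a free parameter, and for $2<p\le 4$ (equivalently $q \ge 4$), the critical point analysis shows full two-dimensional dispersion in the $(t,x_2)$ variables with no loss from the direction of the group velocity — as the authors note, below $p=4$ "the direction of the group velocity no longer plays a role, and the estimate only hinges on having full dispersion." Concretely the Hessian of the phase in $\xi$ is generically nondegenerate, giving a $|t|^{-1}$-type decay that is exactly the right power to run Keel--Tao in the $(t,x_2)$ pair with the lateral $L^p_{x_1}$ sitting outside. The dyadic scaling then produces the factor $|D|^{1-3/p}$: by scaling $\xi = \lambda\eta$ one checks that the loss of derivatives $\lambda^{1-3/p}$ is precisely what is needed to make the frequency-localized estimate scale-invariant, matching \eqref{to-Pecher}.

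Third, I would pass from the single-shell estimate to the full estimate by Littlewood--Paley square function arguments. Since $2 < p \le 4 \le q$, I have $\||D|^{s}S_{NV}(t)u_0\|_{L^p_e L^q} \lesssim \big(\sum_\lambda \||D|^s P_\lambda S_{NV}(t)u_0\|_{L^p_e L^q}^2\big)^{1/2}$ by Minkowski's inequality in the exponent $\ge 2$ (applied in both $x_1$ and $(t,x_2)$), and then each dyadic piece is controlled by $\|P_\lambda u_0\|_{L^2}$ with a scale-invariant constant, and $\big(\sum_\lambda \|P_\lambda u_0\|_{L^2}^2\big)^{1/2} \sim \|u_0\|_{L^2}$. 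One should be slightly careful that the Littlewood--Paley projection is in the full spatial frequency $\xi$, not just $\xi_1$, but since the square function estimate in $L^p L^q$ for $p,q \in (2,\infty)$ is standard this causes no trouble. Finally, the endpoint $p = 4$ coincides with the ordinary $L^4_{t,x}$ Strichartz estimate \eqref{Strichartz-NV-re} (with the lateral and standard norms agreeing), so the range $2 < p \le 4$ is obtained by interpolating that endpoint against the "dispersive endpoint" obtained from the $TT^*$ argument.

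\textbf{Main obstacle.} The delicate point is the stationary phase analysis of the phase $x_1\xi_1 + x_2\xi_2 + t(\xi_1^3 - 3\xi_1\xi_2^2)$ treated as a phase in $\xi \in \R^2$ with $(x_1,x_2,t)$ as parameters: one must verify that, on the support of $\psi(\xi/\lambda)$ and for the relevant range of parameters, the phase has nondegenerate Hessian (or controlled degeneracy that can be handled by a finite decomposition and van der Corput), so that the lateral dispersive decay holds with a power of $|t|$ strong enough to run the abstract Keel--Tao lemma in the sub-endpoint range $q > 4$. The symbol $\omega_{NV}$ is a homogeneous cubic, so its Hessian vanishes along rays; the resolution is that one is not asking for the energy endpoint here (that required the conical restriction $C_e$ in the previous Proposition), so after localizing to $|\xi|\sim\lambda$ and rescaling, the relevant estimate is purely about $L^p$ boundedness of a fixed oscillatory integral operator for $p<4$, which is softer and survives the Hessian degeneracy. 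I expect the bookkeeping of exactly which mixed norm the van der Corput / stationary phase output feeds into — and checking that Minkowski's inequality can indeed be applied in the correct order given $p\le q$ — to be the most technical part, but not conceptually hard.
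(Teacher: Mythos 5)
The paper gives no self-contained proof of this proposition: it simply records that for $p\le 4$ the estimate ``only hinges on having full dispersion'' and points to the analogous lateral Strichartz estimates for the wave and Schr\"odinger equations in \cite{T-nlw} and \cite{BIKT}. So the comparison here is between your sketch and that standard route. Your outer architecture is the right shape (reduce by scaling and Littlewood--Paley to a single dyadic frequency, check that the exponent $1-\tfrac3p$ is exactly the scale-invariant one, interpolate with the $p=4$ case where lateral and standard norms coincide), and the square-function step is fine since $2<p\le q<\infty$.

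The gap is in the core analytic step. In the norm $L^p_{x_e}L^q_{t,x_e^\perp}$ the \emph{outer} variable is $x_e$, so in any $TT^*$ scheme adapted to this norm $x_e$ plays the role of time and the decay you need is decay of the kernel in $x_e$, not in $t$. The estimate you propose to prove --- $\|K_\lambda(\cdot,\cdot,t)\|_{L^\infty_{x}}\lesssim$ (decay in $t$), i.e.\ the $|t|^{-1}$ bound from two-dimensional stationary phase --- is the standard dispersive estimate; running Keel--Tao on it reproduces \eqref{Strichartz-NV-re} in the norms $L^p_tL^q_x$ and gives nothing lateral beyond the diagonal case $p=q=4$. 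Worse, a direct $TT^*$ argument at a lateral pair $(p,q)$ with $p<q$ would interpolate an $L^1_{t,x_e^\perp}\to L^\infty_{t,x_e^\perp}$ kernel bound against the $L^2_{t,x_e^\perp}\to L^2_{t,x_e^\perp}$ bound uniform in $x_e$ --- and the latter is precisely the lateral energy estimate \eqref{to-energy}, which fails without the conical localization to $C_e$ (its constant is controlled by $\sup|\nabla\omega_{NV}\cdot e|^{-1}$ on the frequency support). So the obstruction you attribute only to $p>4$ is present in your scheme for $p<4$ as well. The way the cited references avoid it is to interpolate the $L^4_{t,x}$ endpoint not with an energy-type bound but with a maximal-function-type estimate near $(p,q)=(2,\infty)$, whose $TT^*$ kernel bound is of the form $\int_{\R}\sup_{t,x_e^\perp}|K_0(x_e,x_e^\perp,t)|\,dx_e<\infty$ at unit frequency; this is where the genuine stationary-phase work lies, and the logarithmic loss of that endpoint in two dimensions for a homogeneous cubic symbol is exactly why the stated range is open at $p=2$. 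Your proposal never identifies this second endpoint or the $L^1_{x_e}(L^\infty_{t,x_e^\perp})$ kernel bound it requires, so as written the argument does not close. (A minor additional point: $\omega_{NV}$ is only invariant under rotations by $\pi/3$ up to sign, so ``by rotational symmetry it suffices to treat one $e$'' should be replaced by uniformity of the estimates over the compact family of rotated cubic symbols.)
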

We denote by $S^p_e$ the spaces with the norms on the left.

We can decompose the Fourier space into finitely many conical regions 
where \eqref{to-energy} holds; it suffices in effect to work with just two directions $e_1$ and $e_2$. Combining this with \eqref{to-Pecher} this yields a better balance of frequencies in the bilinear $L^2$ bound,
\begin{equation}
	\label{bi-hom-}
		\norm[L^2(\R \times \R^2)]{S_{NV}(t) u_{0,\lambda}  S_{NV}(t) u_{0,\mu}} \lesssim \mu^{-1+\frac{3}p} \lambda^{-1+\frac3r}
			\norm[L^2]{u_{0,\lambda}} \norm[L^2]{u_{0,\mu}}.
\end{equation}
In the unbalanced case say $\mu \ll \lambda$ this is much better than the balance obtained in \eqref{St.bi1}, and almost yields 
\eqref{bi-hom}. The latter would correspond to $p = 2$ and $r=\infty$, which is not obtained
only because the Strichartz endpoint $(2,\infty)$ is forbidden.

In the same manner as in \eqref{UV-to-Sp}, we note  the 
embeddings
\begin{equation}\label{UV-to-Sp-P}
 U^2_{NV} \subset V^2_{NV} \subset S^p_e, \qquad (S^{p})' \subset DU^2_{NV}, \qquad 2 < p \leq 4, 
\end{equation}
respectively 
\begin{equation}\label{UV-to-Sp-E}
 P_e U^2_{NV} \subset P_e V^2_{NV} \subset S^p_e, \qquad P_e (S^{p}_e)' \subset DU^2_{NV},   \qquad 4\leq  p \leq \infty 
\end{equation}
where $P_e$ is the frequency projector to the set $C_e$ in \eqref{Ce}.

\section{Local Well-Posedness and Continuation for mNV}
\label{sec:lwp}

\subsection{Schottdorf's small data result}
As noted earlier, our starting point here is  Theorem~\ref{thm:Schottdorf}, which proves global existence of solutions and scattering for the mNV equation \eqref{mNV} with $L^2$ initial data of small norm using fixed point arguments on the Duhamel form of \eqref{mNV}. 

\begin{remark}\label{r:tobias}
We note that, while Schottdorf's  result is stated in 
\cite{Schottdorf13} for mNV, the proof there is only carried out in the case of a simplified nonlinearity 
of the form $N(u) = u^2 \partial u$. Unfortunately, his  auxiliary argument reducing the estimates for the full problem to the simplified case is incorrect.
\end{remark}
For later 
reference we provide a brief outline of his argument for the above simplified model problem.
Denote by $S_{NV}(t)$ the solution operator for the linear problem
\begin{equation}
	\label{mNV.lin}
	\left\{
	\begin{aligned}
	v_t &= -(\diff^3 + \dbar^3) v,\\
	v(0) &= v_0 \in L^2(\R^2)
	\end{aligned}
	\right.
\end{equation}
The Duhamel form of  \eqref{mNV} is then
\begin{equation}
	\label{mNV.Duhamel}
	u(t) = S_{NV}(t) u_0 + \int_0^t S_{NV}(t-s) N_{mNV}(u)(s) \, ds.
\end{equation}
Schottdorf's argument is a fixed-point argument 
in the space $\dot X$.  Schottdorf seeks solutions of \eqref{mNV} in the space $\dot{X}$, with the source term (i.e. the nonlinearity) estimated in the space $\DX$, which can be seen as the dual of $\dot Y$. 
The linear part of his analysis relies on the bound \eqref{L2-solve-X}, while the nonlinear part requires  trilinear estimates on $N_{mNV}(q)$. As $\dot{X} \subset \dot{Y} $ and $\lim_{t \to \pm \infty} S_{NV}(-t) u(t)$ exists in $L^2$ for functions $u \in \dot{Y}$, he also obtains scattering operators $W_\pm$.

His trilinear estimate asserts the Duhamel term is a multilinear map from $\dot{X} \times \dot{X} \times \dot{X}$ to $\DX$.  The frequency-localized nonlinearity has to be placed in $DU^2$ and the main estimate is a cubic estimate
\begin{equation}
	\label{tri}
	\norm[\DX]{u_1 u_2 \diff_x u_3} \lesssim \norm[\dot X]{u_1} \norm[\dot X]{u_2} \norm[\dot X]{u_3}
\end{equation}
or by duality
\begin{equation}
	\label{tri-dual}
	\left| \int u_1 u_2 \diff_x u_3 v \, dxdt\right| \lesssim 
		\norm[\dot X]{u_1} \norm[\dot X]{u_2} \norm[\dot X]{u_3} \norm[\dot Y]{v}.
\end{equation}
Two main tools in the proof of \eqref{tri-dual} are  the bilinear dyadic $L^2$ bounds \eqref{bi-UU} and \eqref{bi-UV}.

\subsection{Large data local well-posedness}
Our first goal here is to consider large-data result for this problem, and establish the following local well-posedness and continuation result.

\begin{theorem}
	\label{thm:mNV.global}
a) 	The mNV system \eqref{mNV} is locally well-posed for large initial data in $L^2$, in the sense that for each initial data $u_0$ there exists some time $T > 0$
and a unique local solution 
$u \in \dot X[0,T]$, depending smoothly\footnote{This means that there exists $T > 0$ and a neighbourhood  $B$ of $u_0$ in $L^2$ so that for each initial data in $B$, the solution exists in $[0,T]$ and depends smoothly on $u_0$.} on $u_0$.

b) The solutions can be continued for as long as $\norm[S^p]{u}$ is finite. In particular, if $\norm[L^2]{u_0}=R$, then for any $2 < p < \infty$ we have the global bound
\begin{equation}
		\label{uSp}
			\norm[\dot{X}[0,T{]}]{u} \lesssim R(1+ R \norm[S^p]{u})^{\frac{p}{2	}}.
	\end{equation}
c) If in addition $u_0 \in H^s$ for some $s > 0$, then the solution $u$ also remains in $H^s$, with the bound
\begin{equation}
		\label{uSp-s}
			\norm[\dot{X^s}[0,T{]}]{u} \lesssim \| u_0\|_{\dot H^s} (1+ R \norm[S^p]{u})^{\frac{p}{2	}}.
	\end{equation}

\end{theorem}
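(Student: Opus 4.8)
The plan is to run a contraction/bootstrap argument on the Duhamel formulation \eqref{mNV.Duhamel}, working in the space $\dot X[0,T]$ for the solution and estimating the nonlinearity in $\DX[0,T]$, exactly as in Schottdorf's scheme, but now quantifying the constants in terms of the control norm $\norm[S^p]{u}$ so that the argument is no longer perturbative in the $L^2$ size of the data. The backbone is the linear estimate \eqref{L2-solve-X} together with the trilinear estimate \eqref{tri} for each of the (schematically identical) terms in $N_{mNV}(u)$. The new ingredient relative to \cite{Schottdorf13} is a \emph{divisible} refinement of \eqref{tri}: I want an inequality of the form
\begin{equation}\label{tri-div}
\norm[\DX[I]]{u_1 u_2 \diff u_3} \lesssim \left( \prod_{j} \norm[\dot X[I]]{u_j} \right)^{1-\theta} \left( \prod_j \norm[S^p[I]]{u_j}\right)^{\theta}
\end{equation}
for some $\theta=\theta(p)>0$, on any time interval $I$, with a constant independent of $I$. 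Such a gain of a positive power of the (time-divisible) Strichartz norm is what converts ``small data'' into ``small control norm on a short interval.''

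Granting \eqref{tri-div}, part (a) is then standard: on a short enough interval $[0,T]$ the free evolution $S_{NV}(t)u_0$ has small $S^p$ norm (by dominated convergence in the time-divisible norm $S^p$, since $u_0 \in L^2 \subset$ closure giving $\norm[S^p([0,T])]{S_{NV}(\cdot)u_0}\to 0$ as $T\to 0$), hence the map $u \mapsto S_{NV}(t)u_0 + \int_0^t S_{NV}(t-s) N_{mNV}(u)\,ds$ is a contraction on a ball in $\dot X[0,T]$; smooth (indeed analytic) dependence on $u_0$ follows since the map is a polynomial perturbation of the identity. For uniqueness in $\dot X[0,T]$ one runs the usual difference estimate. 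For part (b), the continuation criterion: suppose $\norm[S^p([0,T^*))]{u}=:A<\infty$; partition $[0,T^*)$ into $N\sim (1+RA)^{?}$ subintervals $I_\ell$ on each of which $\norm[S^p(I_\ell)]{u}\le\eps_0$ for a small absolute $\eps_0$; on each $I_\ell$ the bootstrap using \eqref{L2-solve-X} and \eqref{tri-div} closes and gives $\norm[\dot X(I_\ell)]{u}\lesssim \norm[L^2]{u(t_\ell)}\lesssim R$ (the $L^2$ norm being conserved, or at least a priori bounded, for these solutions); gluing the $U^2_{NV}$ pieces over $N$ intervals and summing (using that $\dot X$ norms add in $\ell^2$/$\ell^1$ fashion across a partition) yields \eqref{uSp} with the exponent $p/2$ coming from how $N$ depends on $A$ and $R$ — one chooses the Strichartz exponent so that the bookkeeping produces exactly $(1+R\norm[S^p]{u})^{p/2}$. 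Part (c) is the same argument applied to $P_k u$ with the weight $2^{ks}$: one needs the $H^s$ (i.e. $\dot X^s$) version of the trilinear estimate, where $s$ derivatives land on the highest-frequency factor and the other two factors are measured in $\dot X$ (equivalently $L^2$-scaled norms), so the $L^2$-critical constant $C(R)$ multiplies $\norm[u_0]{\dot H^s}$ and no smallness in $s$ is needed.

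The main obstacle I expect is establishing the divisible trilinear estimate \eqref{tri-div} — that is, upgrading Schottdorf's \eqref{tri} to gain a genuine positive power of a time-divisible Strichartz norm while keeping the remaining factors in the non-divisible but better-behaved $\dot X=\ell^2 U^2_{NV}$. The difficulty is structural: the $U^2_{NV}$/$V^2_{NV}$ spaces are \emph{not} time divisible, so one cannot simply interpolate ``$\dot X$ vs. $\dot X$'' — one must genuinely re-prove the frequency-by-frequency analysis of the trilinear form, tracking where the bilinear $L^2$ bounds \eqref{bi-UU}, \eqref{bi-UV} and the lateral Strichartz bounds \eqref{to-energy}, \eqref{to-Pecher} are used, and arranging that in every frequency configuration (high-high-to-low, high-low-to-high, etc., including the resonant interactions governed by $\omega_{NV}(k)=k^3+\bar k^3$) at least one input can be peeled off in an $S^p$ (or lateral $S^p_e$) norm with a summable frequency weight. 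The $\log(\lambda/\mu)$ loss in \eqref{bi-UV} and the fact that the $(2,\infty)$ endpoint is forbidden mean one must be careful to spend the small power $\theta$ to absorb these losses; choosing $p$ close to $2$ makes the lateral bilinear bound \eqref{bi-hom-} nearly as strong as the homogeneous bound \eqref{bi-hom}, which is what rescues the argument. Once \eqref{tri-div} is in hand, everything else is the now-routine small-data-to-large-data machinery built on time divisibility.
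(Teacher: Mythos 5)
Your architecture coincides with the paper's: Duhamel iteration in $\dot X$ based on the linear bound \eqref{L2-solve-X}, a refined trilinear estimate in which a time-divisible Strichartz norm replaces part of the $\dot X$ control, a continuity argument on intervals where that control norm is small (the paper supplies the needed absolute continuity of $T_0\mapsto\|u\|_{\dot X[0,T_0]}$ in Remark~\ref{r:ac}), and a partition into $N\sim(R\|u\|_{S^p})^p$ subintervals with $\ell^2$ accumulation of $U^2_{NV}$ norms, which is exactly where the exponent $p/2$ in \eqref{uSp} comes from; part (c) is handled the same way via an $\dot X^s$ variant with the $s$ derivatives placed on the highest-frequency factor. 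The one substantive divergence is the shape of the key estimate. The paper's Lemma~\ref{lemma:tri-S} does not distribute a fractional power $\theta$ of $S^p$ symmetrically over all three inputs as your proposed inequality does; it places exactly one \emph{full} input in $\ell^\infty S^p$ and keeps the other two entirely in $\dot X$, with the freedom to choose that input among $u_1,u_2$ (never the factor carrying the derivative), so that block by block the $S^p$ factor is never the lowest-frequency one. This asymmetry matters: if the lowest-frequency input must carry the Strichartz weight, one loses the bilinear gain $\lambda_{\min}^{1/2}\lambda_{\max}^{-1}$ of \eqref{bi-UU} on the min--max pairing, and the resulting dyadic balance is $(\lambda_{\max}/\lambda_{\min})^{1/p}$ (or $(\lambda_{\max}/\lambda_{\min})^{1/2}$ with the alternative pairing), which is not summable. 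So your fully symmetric multiplicative form is strictly stronger than the paper's estimate and is doubtful as stated in the high--low configurations; the asymmetric one-divisible-factor version is both provable and sufficient, including for the difference and linearization estimates. Your remaining caveats (the forbidden $(2,\infty)$ lateral endpoint, taking the lateral exponent near $2$ in the resonant Beurling case, and the need to propagate the $L^2$ bound across the partition) all track the paper's proof.
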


We note two simple consequences of the 
theorem and  of its proof. The first one is an intermediate step in the proof, and 
a natural extension of the ``small data'' setting:

\begin{corollary}
 Assume that $T$ is chosen so that 
 \begin{equation}
 \| S_{NV}(t) u_0\|_{\ell^\infty S^p[0,T]} \ll \|u_0\|_{L^2}^{-1}.    
 \end{equation}
 Then the solution $u$ exists in $[0,T]$
 and satisfies
	 \begin{equation}
		\label{uSp-bdd}
			\norm[\dot{X}]{u} \lesssim \|u_0\|_{L^2}.
	\end{equation}
 and 
  \begin{equation}
		\label{uSp-low}
			\norm[{S^p[0,T]}]{u} \lesssim 
            \| S_{NV}(t) u_0\|_{S^p[0,T]}.
	\end{equation}
\end{corollary}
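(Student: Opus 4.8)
The plan is to run Schottdorf's fixed point argument directly, but with the small-norm hypothesis replaced by the smallness of the \emph{linear} solution in the control norm $\ell^\infty S^p[0,T]$. Concretely, I would set up the map
\[
\Phi(u) = S_{NV}(t) u_0 + \int_0^t S_{NV}(t-s) N_{mNV}(u)(s)\, ds
\]
on a ball in $\dot X[0,T]$ of radius $CR$, where $R = \|u_0\|_{L^2}$. The linear estimate \eqref{L2-solve-X} gives $\|S_{NV}(t)u_0\|_{\dot X} \lesssim R$, so the issue is to control the Duhamel term. The trilinear bound \eqref{tri} of Schottdorf gives $\|N_{mNV}(u)\|_{\DX} \lesssim \|u\|_{\dot X}^3$, which by itself only closes for small $R$. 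The key refinement is to re-run the trilinear estimate keeping track of one factor in the weaker control norm: I expect one can prove a \emph{divisible} trilinear estimate of the schematic form
\[
\|N_{mNV}(u)\|_{\DX} \lesssim \|u\|_{\dot X}^2 \, \|u\|_{\ell^\infty S^p},
\]
i.e. two factors measured in the strong space $\dot X$ and the third factor measured only in the (time-divisible) Strichartz control norm $S^p$. Granting such an estimate, the fixed point closes on a ball of radius $\sim R$ provided $R^2 \cdot \|u\|_{\ell^\infty S^p} \lesssim R$, i.e. provided the control norm of the \emph{solution} is $\ll R^{-1}$; and a standard bootstrap upgrades the hypothesis on $S_{NV}(t)u_0$ to the same bound on $u$ itself.

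The steps, in order: (1) Establish the divisible trilinear estimate. Here I would revisit the proof of \eqref{tri-dual}, whose two workhorses are the bilinear $L^2$ bounds \eqref{bi-UU}, \eqref{bi-UV}; in the dyadic sum, after using these bilinear bounds on two of the four factors one is left with a product of two remaining factors for which Hölder and a Strichartz estimate suffice, and one of those can be placed in $S^p$ rather than $\dot X$ at the cost of the embedding $U^2_{NV}\subset S^p$ from \eqref{UV-to-Sp} being replaced, in one slot, by keeping the explicit $S^p$ norm. Summation over the dyadic frequencies uses the $\ell^2$ Besov structure and should produce no loss beyond the hypothesized form. (2) Perform the contraction: on $B = \{u : \|u\|_{\dot X[0,T]} \le C_0 R\}$, with $T$ as in the hypothesis so that $\|S_{NV}(t)u_0\|_{\ell^\infty S^p[0,T]} \ll R^{-1}$, one gets $\|\Phi(u)\|_{\dot X} \le C_1 R + C_2 (C_0 R)^2 \|u\|_{\ell^\infty S^p}$; but a priori one only knows the control norm of the linear flow is small, not that of $u\in B$, so (3) run a continuity/bootstrap argument: the solution constructed on a possibly shorter interval satisfies $\|u\|_{\ell^\infty S^p} \lesssim \|S_{NV}(t)u_0\|_{\ell^\infty S^p} + (\text{Duhamel contribution in }S^p)$, and the Duhamel contribution is again controlled, via \eqref{UV-to-Sp} and the divisible trilinear bound, by $R^2 \|u\|_{\ell^\infty S^p}$, so it can be absorbed; this closes the bootstrap on the full interval $[0,T]$ and yields simultaneously \eqref{uSp-bdd} and \eqref{uSp-low}.

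The main obstacle I anticipate is step (1): proving the trilinear estimate in the \emph{divisible} form, with one factor in $S^p$ rather than in $\dot X$, while not losing the endpoint behavior. The delicate configurations are the high-high-low and high-low-low frequency interactions where the derivative $\diff_x$ in $N_{mNV}$ falls on the lowest or highest frequency; there the bilinear bounds \eqref{bi-UV} carry a logarithmic loss that must be defeated by the frequency gaps, and one has to be careful that the factor being demoted to $S^p$ is chosen so that the remaining two factors still admit a favorable bilinear $L^2$ estimate. Once the correct allocation of norms across the four slots is identified — presumably putting the two highest-frequency factors into the bilinear $L^2$ estimate and one of the remaining lower-frequency factors into $S^p$ — the rest is a bookkeeping exercise analogous to Schottdorf's, and steps (2)–(3) are routine perturbative arguments. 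The bound \eqref{uSp-low} with constant essentially $1$ (no $R$-dependence) is exactly what the divisibility of $S^p$ in time will later be leveraged for in the passage from local to global well-posedness.
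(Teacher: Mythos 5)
Your key hypothesized estimate is exactly the paper's Lemma~\ref{lemma:tri-S} (estimate \eqref{tri-S+}): the trilinear bound with one factor demoted to the time-divisible control norm $\ell^\infty S^p$ and the other two in $\dot X$. The overall strategy --- contraction in $\dot X[0,T]$ with the smallness supplied by the control norm of the linear flow rather than by $\|u_0\|_{L^2}$ --- is also the paper's. (Two side remarks on your step (1): the favorable allocation in the dyadic sum pairs the \emph{lowest} frequency with one of the highest in the bilinear bound \eqref{bi-UU}, not the two highest; and for the genuine mNV nonlinearity there is an additional balanced-frequency case coming from the Beurling transform, handled in the paper via lateral Strichartz norms. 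Neither affects the corollary once Lemma~\ref{lemma:tri-S} is in hand.)

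There is, however, a genuine gap in your steps (2)--(3). On the ball $B=\{\|u\|_{\dot X}\le C_0R\}$ the only available bound on the control norm of a generic element is $\|u\|_{\ell^\infty S^p}\lesssim\|u\|_{\dot X}\le C_0R$, so your self-map and difference estimates carry a factor $\sim R^2$, and the contraction does not close for large $R$; you acknowledge this, but the proposed repair does not work as stated: absorbing a Duhamel contribution of size $R^2\|u\|_{\ell^\infty S^p}$ into $\|u\|_{\ell^\infty S^p}$ requires $R^2\ll1$. The paper's fix is to contract not for $u$ but for $v=u-S_{NV}(t)u_0$ in a \emph{small} ball $B_\delta(\dot X)$ with $\delta=\epsilon R^2\ll R$, where $\epsilon=\|S_{NV}(t)u_0\|_{\ell^\infty S^p[0,T]}\ll R^{-1}$. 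Then every term in the multilinear expansion of $N(v+S_{NV}(t)u_0)$ either contains a linear factor, which is placed in $\ell^\infty S^p$ and contributes the small factor $\epsilon$ (the worst term being $N(v,u^{\mathrm{lin}},u^{\mathrm{lin}})\lesssim\epsilon R\|v\|_{\dot X}$, with contraction factor $\epsilon R\ll1$), or is at least quadratic in $v$ and hence controlled by powers of $\delta$; no bootstrap on the control norm of the solution is needed. Both conclusions then drop out: $\|u\|_{\dot X}\le\|S_{NV}(t)u_0\|_{\dot X}+\|v\|_{\dot X}\lesssim R+\epsilon R^2\lesssim R$, and $\|u\|_{S^p[0,T]}\le\|S_{NV}(t)u_0\|_{S^p[0,T]}+\|v\|_{S^p[0,T]}$ with $\|v\|_{S^p}\lesssim\|v\|_{\dot X}\lesssim R^2\|S_{NV}(t)u_0\|_{\ell^\infty S^p[0,T]}$, giving \eqref{uSp-low} with implicit constant depending on $\|u_0\|_{L^2}$.
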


The second one requires the full strength 
of the theorem, but applied on a time interval where some smallness holds:

\begin{corollary}
 Assume that $u$ is an $L^2$ solution in $[0,T]$ so that 
 \begin{equation}
 \| u\|_{\ell^\infty S^p[0,T]} \ll \|u_0\|_{L^2}^{-1}.    
 \end{equation}
 Then the solution $u$ satisfies \eqref{uSp-bdd} and  
  \begin{equation}
		\label{uSp-low+}
		\| S_{NV}(t) u_0\|_{S^p[0,T]}
        \lesssim 
        \norm[{S^p[0,T]}]{u}. 
\end{equation}
\end{corollary}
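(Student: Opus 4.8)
The statement is a near-converse of the previous corollary: on a time interval $[0,T]$ where the solution $u$ has small control norm $\|u\|_{\ell^\infty S^p} \ll \|u_0\|_{L^2}^{-1}$, I want to bound the linear flow's $S^p$ norm by that of the nonlinear solution. The strategy is to run the Duhamel representation \eqref{mNV.Duhamel} in reverse, writing
\[
S_{NV}(t) u_0 = u(t) - \int_0^t S_{NV}(t-s) N_{mNV}(u)(s)\, ds,
\]
and then estimating the Duhamel integral term in $S^p$ in terms of the $\dot X$ norm of $u$, which by part b) of Theorem~\ref{thm:mNV.global} (or more precisely by the bootstrap version already established in the first corollary's proof, giving \eqref{uSp-bdd}) is itself controlled by $\|u_0\|_{L^2}$. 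The key observation is that, since we are in the small-control-norm regime, the trilinear estimate \eqref{tri} combined with the embeddings \eqref{UV-to-Sp} yields $\|N_{mNV}(u)\|_{\DX} \lesssim \|u\|_{\dot X}^2 \|u\|_{\ell^\infty S^p}$, with one factor of $u$ measured in the control norm; hence the Duhamel term has $S^p$ norm (indeed $\dot X$ norm) $\lesssim \|u_0\|_{L^2}^2 \|u\|_{\ell^\infty S^p} \lesssim \|u_0\|_{L^2}^2 \cdot \|u_0\|_{L^2}^{-1} \cdot (\text{small}) $, which is a small multiple of $\|u_0\|_{L^2}$.

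\textbf{Key steps in order.} First, invoke the first corollary (or the bootstrap argument behind Theorem~\ref{thm:mNV.global}) to conclude that under the smallness hypothesis the solution satisfies $\|u\|_{\dot X[0,T]} \lesssim \|u_0\|_{L^2}$; this requires knowing that smallness of $\|u\|_{\ell^\infty S^p}$ propagates to the full $\dot X$ bound, which is exactly the content of the continuation mechanism. Second, apply the reverse Duhamel identity and estimate $\|S_{NV}(t)u_0\|_{S^p} \le \|u\|_{S^p} + \|\int_0^t S_{NV}(t-s) N_{mNV}(u)\, ds\|_{S^p}$, using $\dot X \subset S^p$ from \eqref{UV-to-Sp} and the linear solvability estimate \eqref{L2-solve-X} to bound the Duhamel term by $\|N_{mNV}(u)\|_{\DX}$. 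Third, apply the trilinear estimate \eqref{tri}, being careful to arrange one of the three factors into the control norm rather than the $\dot X$ norm: this requires a mild refinement of \eqref{tri}, namely an estimate of the form $\|u_1 u_2 \partial_x u_3\|_{\DX} \lesssim \|u_1\|_{\dot X}\|u_2\|_{\dot X}\|u_3\|_{\ell^\infty S^p}$ (and permutations), which should follow from the same bilinear $L^2$ bounds \eqref{bi-UU}, \eqref{bi-UV} used in Schottdorf's argument by tracking which input is placed in $L^\infty_t L^r$ versus $U^2_{NV}$. Fourth, combine: the Duhamel term has $S^p$ norm $\lesssim \|u_0\|_{L^2}^2 \|u\|_{\ell^\infty S^p}$, which under the smallness hypothesis is $\ll \|u_0\|_{L^2}$, hence in particular is $\lesssim \|u\|_{S^p[0,T]}$ once one also notes (via the first corollary) that $\|u\|_{S^p}$ is comparable to $\|S_{NV}(t)u_0\|_{S^p}$ up to constants — alternatively, absorb directly by recognizing the Duhamel contribution is a small fraction of the leading term. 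Finally, rearrange to isolate $\|S_{NV}(t)u_0\|_{S^p[0,T]} \lesssim \|u\|_{S^p[0,T]}$.

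\textbf{Main obstacle.} The delicate point is the refined trilinear estimate with one factor in the control norm $\ell^\infty S^p$ rather than $\dot X$. Schottdorf's trilinear bound \eqref{tri} is stated purely in $\dot X$; extracting a version where one slot sits in the weaker control norm (which is essential to gain the smallness factor, since $\|u\|_{\ell^\infty S^p} \ll \|u_0\|_{L^2}^{-1}$ is the only smallness available) requires going back into the proof of \eqref{tri-dual} and checking that in every frequency configuration one of the $u_i$ can be estimated by a Strichartz norm alone. This is plausible because the bilinear $L^2$ bounds \eqref{bi-UU}--\eqref{bi-UV} already lose to $S^p$ via \eqref{UV-to-Sp}, but the bookkeeping across high-high, high-low, and low-high interactions — especially ensuring the derivative $\partial_x u_3$ does not force the differentiated factor into the strong norm — is where the real work lies. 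A secondary technical point is summing the dyadic pieces: the $\ell^2$ Besov structure on $\dot X$ must be reconciled with the $\ell^\infty$ structure on the control norm, which should be harmless since we only need an $\ell^\infty \to \ell^2$ type bound in one variable paid for by the off-diagonal decay in \eqref{bi-UU}.
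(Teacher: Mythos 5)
Your proposal is correct and follows essentially the paper's intended route: the paper gives no written proof of this corollary, indicating only that it follows from the continuation/bootstrap step inside the proof of Theorem~\ref{thm:mNV.global} (which yields \eqref{uSp-bdd}) together with the reverse Duhamel identity and a trilinear bound with one factor placed in the control norm --- and the ``mild refinement'' you flag as the main obstacle is precisely Lemma~\ref{lemma:tri-S}, estimate \eqref{tri-S+}, already proved in the paper (though its proof is heavier than you suggest, requiring lateral Strichartz norms to handle the Beurling transform in the balanced-frequency case). The one cosmetic point is your final step: invoking the first corollary to compare $\|u\|_{S^p}$ with $\|S_{NV}(t)u_0\|_{S^p}$ would be circular, but it is unnecessary, since $\|u\|_{\ell^\infty S^p}\lesssim\|u\|_{S^p}$ makes the Duhamel term directly $\lesssim_{\|u_0\|_{L^2}}\|u\|_{S^p[0,T]}$, and the conclusion follows with no absorption.
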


Closely related to the above result for the Cauchy problem \eqref{mNV}, we we can formulate a corresponding statement for the scattering problem, with data at $\infty$,
\begin{equation}
	\label{mNV-scattering}
\left\{
\begin{aligned}    
	&	u_t + (\partial^3  + \dbar^3) u =  N_{mNV}(u), 
	\\
    & \lim_{t \to \infty} S_{NV}(-t) u(t)  = u_+ \in L^2(\R^2)
\end{aligned}
\right.
\end{equation}

\begin{theorem}
	\label{thm:scattering}
 	The scattering problem \eqref{mNV-scattering} for the mNV system  is locally well-posed for large initial data in $L^2$, in the sense that for each scattering data $u_+$ there exists some time $T > 0$
and a unique local solution $u \in \dot X[T,\infty]$, depending smoothly on $u_+$.
\end{theorem}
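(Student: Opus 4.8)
The idea is that the scattering problem \eqref{mNV-scattering} is structurally identical to the Cauchy problem \eqref{mNV}, only with the initial time $t=0$ replaced by $t=+\infty$, so the proof of Theorem \ref{thm:mNV.global}(a) transfers essentially verbatim once the linear theory and the trilinear estimate are reinterpreted on a time interval of the form $[T,\infty]$. First I would recast \eqref{mNV-scattering} in Duhamel form adapted to data at infinity:
\begin{equation}
  u(t) = S_{NV}(t) u_+ - \int_t^\infty S_{NV}(t-s) N_{mNV}(u)(s)\, ds,
\end{equation}
which is the fixed point equation whose solution, if it lies in $\dot X[T,\infty]$, automatically satisfies $S_{NV}(-t)u(t) \to u_+$ in $L^2$ as $t \to \infty$ because of the embedding $\dot X \subset \dot Y$ and the fact that $\lim_{t\to\infty} S_{NV}(-t)u(t)$ exists in $L^2$ for $u \in \dot Y$. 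The key observation is that the linear bound \eqref{L2-solve-X} and the trilinear estimate \eqref{tri} (equivalently \eqref{tri-dual}) are time-interval estimates that hold with uniform constants on \emph{any} interval $I \subseteq \R$, including half-infinite ones; the Duhamel operator $g \mapsto -\int_t^\infty S_{NV}(t-s) g(s)\,ds$ from $\DX[T,\infty]$ to $\dot X[T,\infty]$ is bounded exactly as the forward Duhamel operator is, since the $U^2_{NV}$ and $V^2_{NV}$ spaces are insensitive to the choice of base point.

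The actual argument is then a contraction mapping. I would choose $T$ large enough that the free evolution $S_{NV}(t)u_+$ has small $\ell^\infty S^p[T,\infty]$ norm — this is possible by dominated convergence in the Strichartz norm, since $\| S_{NV}(t)u_+\|_{\ell^2 S^p(\R)} \lesssim \|u_+\|_{L^2} < \infty$, so the tail $\| S_{NV}(t)u_+\|_{\ell^2 S^p[T,\infty]} \to 0$ as $T \to \infty$. On the ball $\{\|u\|_{\dot X[T,\infty]} \le 2\|u_+\|_{L^2}\}$, the trilinear estimate \eqref{tri} shows the nonlinear term is $O(\|u\|_{\dot X}^3)$ in $\DX[T,\infty]$, but to get a genuine contraction at large data one uses the refined form in which the cubic term is controlled by $\|u\|_{\dot X}$ times $\|u\|_{S^p}^2$ (with $\|u\|_{S^p}$ small once $T$ is large, exactly as in the "control norm" philosophy of Section \ref{sec:lwp}); this gives a small Lipschitz constant and hence a unique fixed point. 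Smooth (indeed analytic) dependence on $u_+$ follows from the implicit function theorem applied to the smooth — in fact multilinear plus linear — map defining the fixed point equation, exactly as in Theorem \ref{thm:mNV.global}(a).

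The main obstacle, as in the Cauchy problem, is that the bare trilinear bound \eqref{tri} is not itself contractive for large data; one genuinely needs the interpolated/"control norm" version that isolates a small factor coming from the smallness of the Strichartz norm of the linear evolution on $[T,\infty]$. Concretely, one must verify that the proof of \eqref{tri} — whose two main tools are the bilinear $L^2$ bounds \eqref{bi-UU} and \eqref{bi-UV} — can be run so that at least two of the three input factors are measured in the divisible norm $S^p$ rather than in $\dot X$, producing an estimate of the schematic form $\|u_1 u_2 \diff_x u_3\|_{\DX} \lesssim \|u_1\|_{S^p}\|u_2\|_{S^p}\|u_3\|_{\dot X}$ up to permutations; this is precisely the refinement established in Section \ref{sec:lwp} for the forward problem, and it transfers to $[T,\infty]$ without change. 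Granting that, everything else is routine. I would remark that the solution so constructed coincides, on any compact subinterval, with the local solution of the Cauchy problem from Theorem \ref{thm:mNV.global} with data $u(T)$, so the two notions of solution agree where both are defined.
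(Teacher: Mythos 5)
Your proposal is correct and follows essentially the same route as the paper: Duhamel formulation with data at infinity, choice of $T$ so that $\|S_{NV}(t)u_+\|_{S^p[T,\infty]}$ is small, and a contraction in a ball of $\dot X[T,\infty]$ driven by the refined trilinear estimate of Lemma~\ref{lemma:tri-S}, exactly as in the proof of Theorem~\ref{thm:mNV.global}. One minor imprecision: the control-norm estimate actually proved (and needed) is \eqref{tri-S+}, which places only \emph{one} factor in $\ell^\infty S^p$ and the other two in $\dot X$ — not two factors in $S^p$ as in your schematic form — and this single small factor already yields the contraction under the condition $\epsilon R \ll 1$.
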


These theorems bring three key related improvements compared to Schottdorf's result, both of which play fundamental roles later on, in our proof of the main result in Theorem~\ref{thm:mNV}:
\begin{itemize}
    \item the transition from a model nonlinearity to the full nonlinearity in mNV, including the Beurling transform, see Remark~\ref{r:tobias}.
    \item the transition from small data to large data,
    \item the relaxed control norm $S^p$, and the corresponding continuation result relative to this norm. 
\end{itemize}

We note that Schottdorf's approach cannot be used to prove any large data result. The problem with the $U^2_{NV}$ norms in \eqref{tri-dual} is that they are not time-divisible, i.e., they cannot be made small by reducing the size of the time interval. 
To address this difficulty, here we will seek to replace one of the $U^2_{NV}$ norms by the Strichartz space $S^p$, which is used both as a source of smallness and as an extension criterion.

The spaces $S^p$ we use here are larger than $\dot X$. Precisely, we have the embeddings (see \eqref{dotXY} and \eqref{UV-to-Sp})
\[
\dot{X} \subset \dot{Y} \subset S^p
\]
so that 
\begin{equation}
	\label{SYX}
	\norm[S^p]{u} \lesssim \norm[\dot{Y}]{u} \lesssim \norm[\dot{X}]{u}.
\end{equation}

We will seek to replace the trilinear estimate by a stronger form. 
We denote 
by $NL(u_1,u_2,u_3)$ the symmetrized  map
\[ (u_1,u_2,u_3) \mapsto \int_0^t S_{NV}(t-s) N_{mNV}(u_1,u_2,u_3)(s) \, ds.
\]

With this notation, our main estimates read as follows:

\begin{lemma}
	\label{lemma:tri-S}
a)	For any $2 < p < \infty$ we have
	\begin{equation}
		\label{tri-S+}
     \begin{aligned}   
		\Bigl\| NL(u_1, u_2,u_3) \Bigr\|_{\dot{X}} 
		 \lesssim & \ 
				\norm[\ell^\infty S^p]{u_1} \norm[\dot{X}]{u_2} \norm[\dot{X}]{u_3} 
                +
				\norm[\dot{X}]{u_1}	\norm[\ell^\infty S^p]{u_2} \norm[\dot{X}]{u_3}
	\end{aligned}
    \end{equation}
b) For any $s \geq 0$ we have
	\begin{equation}
		\label{tri-S-s}
			\Bigl\| NL(u,u,u) \Bigr\|_{\dot{X^s}} 
		 \lesssim \\[5pt]
				\norm[\dot{X}]{u} \norm[\dot{X^s}]{u} \norm[\ell^\infty S^p]{u}.
	\end{equation}
\end{lemma}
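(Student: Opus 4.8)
The plan is to prove the trilinear estimate \eqref{tri-S+} (and its Sobolev variant \eqref{tri-S-s}) by a dyadic Littlewood--Paley decomposition of the three inputs and the dual output, reducing everything to the bilinear $L^2$ bounds \eqref{bi-UU}, \eqref{bi-UV} together with the lateral Strichartz / bilinear improvements \eqref{bi-hom-}, \eqref{UV-to-Sp-P}--\eqref{UV-to-Sp-E}. Recall that by duality (Theorem~\ref{thm:DUp}) it suffices to bound
\[
\left| \int_{\R\times\R^2} N_{mNV}(u_1,u_2,u_3)\, \bar v \, dx\, dt \right| \lesssim \norm[\ell^\infty S^p]{u_1}\norm[\dot X]{u_2}\norm[\dot X]{u_3}\norm[\dot Y]{v} + (\text{sym in }1,2),
\]
where $v$ is tested in $\ell^2 V^2_{NV} = \dot Y$, since $\DX = (\dot Y)'$. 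Here $N_{mNV}$ has the schematic form $u_1 u_2\, \partial u_3$ (after using $\dbar^{-1}\diff$ = Beurling transform, which is bounded on $L^p$, $1<p<\infty$, and commutes with frequency projections up to rapidly decaying tails); the extra inverse-derivative factors $\dbar^{-1}\diff(|u|^2)$ and $\dbar^{-1}(\ubar\diff u)$ are handled the same way since the Beurling transform is a Calder\'on--Zygmund operator. One reduction at the outset: $N_{mNV}$ is symmetric in two of its three slots and contains a derivative on the third, so it suffices to estimate each of the finitely many schematic terms $T u_1 \cdot T' u_2 \cdot \partial\, T'' u_3$ where $T,T',T''$ are compositions of Beurling transforms.

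\textbf{Main steps.} \emph{Step 1: dyadic setup.} Write $u_j = \sum_{\lambda_j} u_{j,\lambda_j}$, $v = \sum_{\mu} v_\mu$, and expand the four-linear form as a sum over frequencies $(\lambda_1,\lambda_2,\lambda_3,\mu)$. By symmetry reduce to the output frequency $\mu$ being comparable to the maximum of the inputs, and order $\lambda_1 \le \lambda_2 \le \lambda_3 \sim \max$ up to relabeling (keeping track that the $\ell^\infty S^p$-normed factor could sit in any slot — this is why the right-hand side of \eqref{tri-S+} has the two symmetric terms). The derivative $\partial$ contributes a factor $\lambda_3 \sim \mu$.

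\emph{Step 2: pair up and apply bilinear $L^2$.} Group the four factors into two pairs of comparable-or-separated frequencies and apply Cauchy--Schwarz in $L^2_{t,x}$, then the bilinear estimates: pair the highest input with $v$ and the two lower inputs together, or pair highest with second-highest and lowest with $v$, choosing whichever grouping gives summable geometric series. The $\ell^\infty S^p$ factor is placed via \eqref{St.bi1}/\eqref{bi-hom-} — it is the one factor for which we only have a single-frequency $S^p$ bound (no $\ell^2$ summation and no full $U^2$), so it must be the one that carries the ``loose'' frequency weight; the two $\dot X$ factors and the $\dot Y$ factor can absorb the $\ell^2$ summation over their own frequencies. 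Time divisibility of $S^p$ is \emph{not} used inside this lemma — it is used later, in Section~\ref{sec:gwp} — so here the $S^p$ norm is just a fixed finite quantity. The bilinear bounds \eqref{bi-UU}, \eqref{bi-UV} with the logarithmic loss in the $V^2$ case are exactly enough: the $\mu^{1/2}\lambda^{-1}$ gain beats the $\lambda$ from the derivative with room to spare, and the $\log$ is summable against the geometric decay.

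\emph{Step 3: frequency summation.} Collect powers of $\lambda_1,\lambda_2,\lambda_3,\mu$; verify each has a strictly favorable (negative, or zero with a spare geometric factor) exponent in the ``off-diagonal'' regimes so that the sums over dyadic scales converge, using the lateral/conical refinement \eqref{bi-hom-} precisely in the most unbalanced high-low interactions where \eqref{bi-UU} alone would be logarithmically short. In the ``diagonal'' regime $\lambda_1\sim\lambda_2\sim\lambda_3\sim\mu$ use orthogonality / Cauchy--Schwarz in the single remaining free dyadic parameter.

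\emph{Step 4: the Sobolev variant.} For \eqref{tri-S-s}, put the weight $2^{ks}$ on whichever of the three (now equal) inputs $u$ carries the top frequency $\lambda_3\sim\mu$; since $\partial$ already lands on the third slot this is natural. The same bilinear bounds apply verbatim; the output frequency weight $2^{ks} \sim \lambda_3^s$ is matched by the top input's $\dot X^s$ norm, leaving one $\dot X$ factor and one $\ell^\infty S^p$ factor for the two lower inputs, and in the low-high regimes the extra $\lambda_3^s$ is harmless since the frequency gain is in $\mu/\lambda \le 1$ powers while the $s$-weight is on the already-largest frequency. One must check the case where the $s$-weight frequency is \emph{not} the top frequency (possible only when all three are comparable), which is again the diagonal case handled by Cauchy--Schwarz.

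\textbf{Main obstacle.} The delicate point is the high$\times$low$\times$low$\to$high interaction where two inputs are at frequency $\ll\lambda_3$: naively pairing highest-with-$v$ and the two lows together gives, from \eqref{bi-UU}, a gain $\sim (\lambda_2/\lambda_3)^{?}$ that after the derivative loss $\lambda_3$ and the low-frequency bilinear bound is only logarithmically convergent (or, with $V^2$ on $v$, genuinely log-divergent). Resolving this requires the conical decomposition and the lateral Strichartz bound \eqref{bi-hom-}, exploiting that linear NV solutions at frequency $\lambda$ and $\mu$ with $\mu\ll\lambda$ genuinely separate in physical space (transversal group velocities), giving the improved balance $\mu^{-1+3/p}\lambda^{-1+3/r}$ which, optimized over admissible $(p,r)$ near the forbidden endpoint $(2,\infty)$, beats the derivative loss with an $\eps$ of room — enough to sum. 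Correctly distributing which of the (up to) four tensor factors goes into the $S^p_e$ lateral space versus the $V^2_{NV}\subset S^p_e$ embedding \eqref{UV-to-Sp-E}, and checking this is consistent with $v$ only being in $\dot Y=\ell^2 V^2_{NV}$ (not $\dot X$), is the bookkeeping crux. Everything else is a routine, if lengthy, geometric-series summation.
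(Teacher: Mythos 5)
Your overall architecture --- dualize against $v\in\dot Y$, Littlewood--Paley everything, run the bilinear $L^2$ bounds \eqref{bi-UU}/\eqref{bi-UV} plus lateral Strichartz in the hard regime, then Cauchy--Schwarz the dyadic sums --- matches the paper's. But there is a genuine gap at your very first reduction: you discard the Beurling transforms at the outset (``bounded on $L^p$\dots commutes with frequency projections up to rapidly decaying tails\dots handled the same way since the Beurling transform is a Calder\'on--Zygmund operator'') and work thereafter with the model nonlinearity $u_1u_2\,\partial u_3$. This is precisely the reduction that Remark~\ref{r:tobias} flags as incorrect in Schottdorf's thesis, and handling the full nonlinearity ``including the Beurling transform'' is listed as one of the three improvements this lemma must deliver. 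The obstruction is structural: the quadrilinear form pairs the factors as $\int (u_iu_j)\,B(u_k v)$, and the bilinear bound \eqref{bi-UU} has to be applied to the pair $u_{min}u_{max}$, which in general straddles $B$. You cannot move a factor across $B$; its $L^p$-boundedness only helps if you give up the bilinear gain and use pure H\"older on each factor, which yields the unfavorable balance $\mu^{-\sigma}\lambda^{+\sigma}$ when the frequencies separate.

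The correct case division is therefore by the frequency balance of the two pairs flanking $B$, not by which input carries the lowest frequency. If at least one pair is unbalanced, the multiplier localizes to a single dyadic annulus, has integrable kernel there, and can be discarded --- and in that regime \eqref{bi-UU}/\eqref{bi-UV} together with \eqref{St.bi1} already give a polynomial gain $(\lambda_{min}/\lambda_{max})^{c}$, $c>0$; your claim that they are ``logarithmically short'' there is not right, since the $\log$ loss in \eqref{bi-UV} is absorbed by that gain. The lateral Strichartz norms and the conical decomposition are needed for a different reason than the one you give: in the doubly balanced case $\{\mu,\mu,\lambda,\lambda\}$ with $\mu\ll\lambda$, $B$ cannot be removed, no bilinear $L^2$ pairing is available across it, and one must instead invoke the boundedness of the Beurling transform on the mixed-norm lateral spaces $L^p_eL^q$ (a nontrivial fact, cited from Muscalu--Schlag and Miyachi--Tomita), combine it with the lateral estimates \eqref{to-energy}/\eqref{to-Pecher} near the forbidden endpoint, and interpolate with the plain Strichartz bound to extract a negative power of $\mu/\lambda$. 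Without this step your Steps 2--3 do not close in the balanced-pair regime, so as written the argument only covers the model nonlinearity.
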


\begin{remark}
We note that using two $\ell^\infty S^p$ norms in  \eqref{tri-S+}, rather than three, is needed in order to obtain estimates not only for solutions but also for  the linearized equation or more generally for differences, which require some symmetry breaking.
\end{remark}

\begin{remark}\label{r:ac}
The bounds in the lemma are stated on the 
real line, but clearly they can also be restricted to arbitrary time intervals $I$.
In this context, a straightforward observation
is that, if the space $\ell^\infty S^p$ is replaced by its slightly smaller subspace $\ell^\infty_0 S^p$, then we have the absolute continuity property 
\begin{equation}\label{AC-NL}
\lim_{|I| \to 0} \  \Bigl\| NL(u_1, u_2,u_3) \Bigr\|_{\dot{X}[I]} = 0.   
\end{equation}
This follows directly from the
similar property for the $\ell^\infty_0 S^p$ norm.
\end{remark}

\begin{proof}[Proof of Lemma~\ref{lemma:tri-S}]
a) By duality (see \eqref{tri} and \eqref{tri-dual}) it suffices
to consider the form
\[
I(u_1,u_2,u_3,v) = \int  N_{mNV}(u_1,u_2,u_3)  v \, dx \, dy \, dt
\]
and show that $I$ satisfies
the estimate
\begin{equation} \label{tri-Sb}
|I(u_1,u_2,u_3,v)| \lesssim 
(\norm[S^p]{u_1} \norm[\dot{X}]{u_2} \norm[\dot X]{u_3}+
				\norm[\dot{X}]{u_1}	\norm[S^p]{u_2} \norm[\dot{X}]{u_3})
\norm[\dot Y]{v}
\end{equation}
We bound $I$ through a Littlewood-Paley decomposition of $u_1$, $u_2$, $u_3$, $w$ into dyadic frequencies
		$[\lambda]:=\lambda_1$, $\lambda_2$, $\lambda_3$, $\lambda$,
\[
I(u_1,u_2,u_3,v) = \sum_{[\lambda]}
I(P_{\lambda_1} u_1,P_{\lambda_2}u_2,P_{\lambda_3} u_3,P_\lambda v)
:= \sum_{[\lambda]} I_{[\lambda]}
\]
We estimate each of the summands separately and then carry out the dyadic summation. 

Since in our estimates we do not 
need to distinguish between $u$ and $\bar u$ and between $B$ and $B^{-1}$, to fix the notations we only need to consider two types of terms, namely
\[
N^1_{mNV}(u) = \partial uB(u^2), \qquad
N^2_{mNV}(u) = uB(u \partial u).
\]
However, since $N_{nNV}$ is assumed to be symmetrized in our estimates, in order to dispense with the symmetrization we need to allow for 
an arbitrary placement of $u_1$, $u_2$ 
and $u_3$ in $N^1_{mNV}$, respectively $N^2_{mNV}$.

\medskip

Due to the presence of the Beurling operator 
we distinguish two cases:

\begin{enumerate}[label=\Alph*)]
\item  Either the pair of frequencies 
$\lambda_1, \lambda_2$ or the pair 
of frequencies $\lambda_3,\lambda$
is unbalanced. Then the Beurling operator (which can be shifted to either pair) can be localized to a fixed dyadic frequency, either 
$\max\{\lambda_1, \lambda_2\}$ or  $\max\{\lambda_3,\lambda\}$, in which case it has an integrable kernel and can be harmlessly discarded.

\item We have $\lambda_1 \approx \lambda_2$ and $\lambda_3 \approx \lambda$. Here we can no longer 
dispense with the Beurling operator, and instead we need to carry out the analysis at the level of mixed norm spaces. It is 
in this case that the lateral Strichartz spaces are needed.

\end{enumerate}
We now successively consider the two cases.

A. For each term in the dyadic sum we will 
choose one of the arguments to estimate in $S^p$, and the
others will be estimated in $\dot X$ or $\dot Y$ as needed.
The key principles are as follows:

\begin{itemize}
    \item In order for the expression $I_{\lambda_1,\lambda_2,\lambda_3,\lambda}$ to be nonzero, 
    the two highest frequencies must be comparable. Hence we organize the frequencies as
    \[
[\lambda] := \{ \lambda_1,\lambda_2,\lambda_3,\lambda \} = \{ \lambda_{min},\lambda_{med}, \lambda_{max},  \lambda_{max}\}.
    \]
 and correspondingly we denote
  \[
\{ P_{\lambda_1} u_1,P_{\lambda_2} u_2,P_{\lambda_3} u_3,P_\lambda v \} = \{ u_{min},u_{med}, u_{max},  u_{max}\}.
    \]
\item To prove \eqref{tri-Sb} we
may choose one of two entries to bound in $S^p$. In particular this implies that we can always avoid choosing 
$\lambda_{min}$, so we are left with two cases.

\item The derivative applied to $u_3$ yields a frequency factor, which we can always bound by $\lambda_{max}$.
\end{itemize}
Given the above discussion, we can simply assume that $I_{[\lambda]}$ has the form
\[
I_{[\lambda]} = \lambda_{max} \int 
P_{\lambda_1} u_1 P_{\lambda_2}u_2 P_{\lambda_3} u_3 P_\lambda v \, dx dy dt,
\]
and consider two cases:
\medskip

\emph{(i)} $u_{med}$ is estimated in $S_p$.
Using \eqref{bi-UU} to bound $u_{min} u_{max}$ in $L^2$, 
and \eqref{St.bi1} for the other two factors we arrive at
\begin{equation}
|I_{[\lambda]}| \lesssim \lambda_{min}^\frac12 \lambda_{mid}^{-\frac{1}{p}}\lambda_{max}^{-\frac{1}{r}}\| u_{min}\|_{U^2_{NV}}
\| u_{med}\|_{S^p} \| u_{max}\|_{S^r}
\| u_{max}\|_{U^2_{NV}}
\end{equation}
\medskip

\emph{(ii)} One $u_{max}$ is estimated in $S_p$.
Using \eqref{bi-UU} to bound $u_{min} u_{max}$ in $L^2$, 
and \eqref{St.bi1} for the other two factors we arrive at
\begin{equation}
|I_{[\lambda]}| \lesssim \lambda_{min}^\frac12 \lambda_{mid}^{-\frac{1}{r}}\lambda_{max}^{-\frac{1}{p}}\| u_{min}\|_{U^2_{NV}}
\| u_{med}\|_{S^r} \| u_{max}\|_{S^p}
\| u_{max}\|_{U^2_{NV}}
\end{equation}

\medskip

 In both cases  we remark that 
 \begin{itemize}
 \item 
we can bound the $S^r$ norm by the $V^2_{NV}$ norm, and 
 \item
 we can also use \eqref{bi-UV} instead to substitute either of the $U^2_{NV}$ norms
by $V^2_{NV}$ at the expense of a harmless factor 
of $1+ \log(\lambda_{max}/\lambda_{min})$.
\end{itemize}
\medskip

What is essential is that in both cases we have a gain 
$(\lambda_{min}/\lambda_{max})^c$ with $c > 0$; this will allow us to carry out the dyadic summation. The 
arguments are similar in all cases so we describe one
of them,
\[
\lambda_1 \leq \lambda_2 \leq \lambda_3 \approx \lambda.
\]
Here 
\[
\lambda_1 = \lambda_{min}, \qquad \lambda_2= \lambda_{med}, \qquad \lambda_3 \approx \lambda \approx \lambda_{max}.
\]
Then we use case (i) above with the $V^2_{NV}$ 
adjustment to estimate
\[
\begin{aligned}
\sum_{ \lambda_1 \leq \lambda_2 \leq \lambda_3 \approx \lambda} |I_{[\lambda]} |\lesssim & \ 
\sum _{ \lambda_1 \leq \lambda_2 \leq \lambda_3 \approx \lambda} \left(\frac{\lambda_{1}}{\lambda}\right)^c\| P_{\lambda_1} u_1\|_{U^2_{NV}}
\| P_{\lambda_2} u_{2}\|_{S^p} \| P_{\lambda_3} u_{3}\|_{S^r}
\| P_\lambda v\|_{V^2_{NV}}
\\ \lesssim & \ 
\| u_{2}\|_{\ell^\infty S^p} \| u_{3}\|_{\ell^\infty S^r}
\sum _{ \lambda_1 \leq \lambda} \left(\frac{\lambda_{1}}{\lambda}\right)^c\| P_{\lambda_1} u_1\|_{U^2_{NV}}
\| P_\lambda v\|_{V^2_{NV}}
\\ \lesssim & \ 
\| u_{2}\|_{\ell^\infty S^p} \| u_{3}\|_{\ell^\infty S^r}
\left(\sum_{\lambda_1} \| P_{\lambda_1} u_1\|_{U^2_{NV}}\right)^\frac12
\left(\sum_\lambda \| P_\lambda v\|_{V^2_{NV}}\right)^\frac12
\\ = & \ \| u_{2}\|_{\ell^\infty S^p} \| u_{3}\|_{\ell^\infty S^r}
 \| u_1\|_{\dot X}
 \| v\|_{\dot Y}
\end{aligned}
\]
as needed for \eqref{tri-Sb}.
All other cases are similar. 

\bigskip

B. Here we only need to consider 
dyadic terms where the two pairs of
frequencies on either side of the 
Beurling transform are equal. We denote the set of four frequencies
by $[\lambda] = \{\mu,\mu,\lambda,\lambda\}$, where
in order to fix the notations we assume that $\mu \lesssim \lambda$.
To reduce the number of cases we note
that the derivative which is present in $N_{mNV}$ will yield a factor of either $\lambda$ or $\mu$, so we assume the worst and replace it by a $\lambda$ factor. Then the integral to evaluate has the form
\begin{equation}\label{I-lm}
I_{\mu\lambda} = \lambda \int (P_{\mu} u_1 P_{\mu} u_2)B(P_{\lambda} u_3 P_\lambda v) \, dt dx dy
\end{equation}
but where we need to allow for arbitrary permutations of the functions $u_1,u_2,u_3$ and $v$.
For clarity we denote the expression above by $I_{\mu\lambda}^{1234}$, 
and its variants by permuting the 
indices $1234$ in the superscript.

 We evaluate the integral 
in two different ways. First, we 
use only regular Strichartz norms type norms, with $S^p$ on $u_1$ and $u_2$,
and also using the $L^p$ boundedness of the Beurling transform.
For  the expression $I_{\mu\lambda}^{1234}$
above, for instance, we obtain
\[
|I_{\mu\lambda}^{1234}|
\lesssim \mu^{-\sigma}
\lambda^{\sigma}
\| P_\mu u_1\|_{S^p} \|P_{\mu}u_2\|_{S^p}\|P_\lambda u_3\|_{S^r} \| P_\lambda v\|_{S^r}, \qquad \frac1p+\frac1r = \frac12, \quad \sigma = \frac2p,
\]
which has the desired norms but an unfavorable frequency balance unless $\mu \approx \lambda$. Other permutations yield different prefactors, but all are of the same form  
with $\sigma \in (0,1)$. This bound suffices 
in the balanced case $\mu \approx \lambda$.

If $\mu \ll \lambda$ then we need a second step,  where we use lateral Strichartz norms  and the boundedness of the Beurling transform in mixed norm spaces, see e.g. the book of Muscalu-Schlag~\cite{MS}, and also \cite{MiyachiTomita2018}.
To obtain a nonzero output the frequencies of $u_3$ and $v$ must be $\mu$ close, 
which implies that by using an angular partition of unity we can 
assume that $u_3$ and $v$ are frequency localized in the same conical region $C_e$ for some direction $e$. Then we obtain
\[
|I^{1234}_{\mu\lambda}|
\lesssim \mu^{-2+\frac{6}{p_1}}
\lambda^{-1+\frac{6}{r_1}}
\| P_\mu u_1\|_{S^{p_1}_e} \|P_{\mu}u_2\|_{S^{p_1}_e}\|P_\lambda u_3\|_{S^{r_1}_e} \| P_\lambda v\|_{S^{r_1}_e}, \quad \frac1{p_1}+\frac1{r_1} = \frac12.
\]
Here we choose $p_1$ close to $2$
and $r_1$ close to $\infty$ so that
\[
-2+\frac{6}{p_1} = 1 -\delta, 
\qquad -1+\frac{6}{r_1} = -1 +\delta
\]
with $\delta$ arbitrarily small.
Finally, we average the last to bounds, estimating all Strichartz norms but $S^p$ with the appropriate $U^2_{NV}$, respectively $V^2_{NV}$ norm. This gives
\[
|I_{\mu\lambda}^{1234}|
\lesssim \left(\frac{\mu}{\lambda}\right)^{\frac{1-\sigma -\delta}2}
\| P_\mu u_1\|_{S^p}^\frac12 \|P_{\mu}u_2\|_{S^p}^\frac12 
\| P_\mu u_1\|_{U^2_{NV}}^\frac12 \|P_{\mu}u_2\|_{U^2_{NV}}^\frac12 
\|P_\lambda u_3\|_{U^2_{NV}} \| P_\lambda v\|_{V^2_{NV}}.
\]
Here $\sigma < 1$, so by choosing $\delta$ small enough we can insure that the exponent of $\mu/\lambda$ 
is negative. This is exactly what is needed in order to insure the 
dyadic summation in $\mu$ and $\lambda$, which is simpler than in case A due to the restricted range of indices. This proves \eqref{tri-Sb}
for $I_{\mu\lambda}^{1234}$. The argument is similar for other permutations of the upper indices.

\medskip

b) The proof of \eqref{tri-S-s}
is nearly identical to the argument above. The quartic estimate to prove is now
\begin{equation} \label{tri-S-ss}
|I(u,u,u,v)| \lesssim 
\norm[\dot{X}]{u} \norm[\dot{X^s}]{u_2} \norm[S^p]{u}
\norm[\dot Y^{-s}]{v}.
\end{equation}
For this we use the same strategy as above, organizing the three $u$ factors so that
\begin{itemize}
\item $u_{max}$ is estimated in $\dot X^s$, 
\item $u_{min}$ is estimated in $\dot X$
\item the remaining $u_{med}$ is estimated in $S^p$.
\end{itemize}
This yields the exact same frequency balance as in case
(a), but with an additional 
$(\lambda/\lambda_{max})^s$ factor.
But this is always favorable, so the dyadic summation is identical to the one in case (a). 
\end{proof}

\begin{proof}[Proof of Theorem \ref{thm:mNV.global}]

We now use the estimate in the above Lemma in order to prove the large data local well-posedness. For this we still do a fixed point argument but in a weighted norm.

We rewrite our equation in the Duhamel form
\[
u(t) = S_{NV}(t) u_0 + \int_{0}^t S_{NV}(t-s) N(u(s)) \,  ds. 
\]
Suppose
\[
\|u_0\|_{L^2} = R.
\]
Then 
\[
\| S_{NV}(t) u_0 \|_{S^p} \lesssim \| S_{NV}(t) u_0\|_{\dot X} \lesssim R.
\]
We choose the time interval $I = [0,T]$ small enough 
so that 
\[
\| S_{NV}(t) u_0 \|_{S^p} \leq \epsilon.
\]
Denoting 
\[
v = u(t) - S_{NV}(t) u_0,
\]
we rewrite the above Duhamel form in terms of $v$,
\begin{equation}
v(t) =  \int_{0}^t S_{NV}(t-s) N(v(s) + S_{NV}(s) u_0) \, ds.
\end{equation}
We solve this using the contraction principle in $B_\delta(\dot X)$. By \eqref{tri-S+} we 
can estimate the worst term by 

\[
\| N(v,S_{NV}(s) u_0,S_{NV}(s) u_0) \|_{DU^2} \lesssim 
\|v\|_{\dot X} \|S_{NV}(s) u_0\|_{\dot X} \|S_{NV}(s) u_0)\|_{S^p}
\lesssim \epsilon R \|v\|_{\dot X},
\]
so we need
\[
\epsilon R \ll 1.
\]
Similarly 
\[
\| N(S_{NV}(s)u_0 ,S_{NV}(s) u_0,S_{NV}(s) u_0) \|_{DU^2} 
\lesssim \epsilon R^2,
\]
which gives us the choice 
\[
\delta = \epsilon R^2, 
\]
and which in turn should be $\ll R$.

We conclude that if $\epsilon \ll R^{-1}$ then we get a solution 
$v$ with 
\[
\|v\|_{U^2} \lesssim \epsilon R^2.
\]

\bigskip

Next we assume we know that we have a solution $u$ in a time interval 
$I=[0,T]$ with 
\[
\| u\|_{S^p(I)} \leq \epsilon.
\]
Then we apply the trilinear estimate in $I$ to get
\[
\| u\|_{\dot X(I)} \lesssim \|u_0\|_{L^2} + \epsilon \|u\|_{\dot X}^2
= R + \epsilon \|u\|_{\dot X(I)}^2.
\]
The same clearly holds in any subinterval 
$I_0 = [0,T_0] \subset I$. Since $S^p \subset \ell^\infty_0 S^p$, by the property \eqref{AC-NL} it follows that the restricted norm 
$\| u\|_{\dot X[I_0]}$ is a continuous function of $T_0$.

Assuming $\epsilon R \ll 1$, the above continuity property allows us to use a continuity argument with respect to $T_0$
in order to get
\[
\| u\|_{\dot X(I)} \lesssim R.
\]

\bigskip

Finally, suppose we have a solution $u$ in a time interval $I$ 
so that $u \in \dot X(I) \subset S^p(I)$. We divide $I$ into subintervals $I_j$
so that 
\[
\| u\|_{S^p(I_j)} \lesssim \epsilon \ll R^{-1}.
\]
We need about
\[
N = R^p \|u\|_{S^p}^p
\]
such intervals. Then in each such interval we have $\|u\|_{\dot X(I_j)} \lesssim R$
so adding up  we get 
\[
\|u\|_{\dot X(I)} \lesssim R^\frac12 .
\]
In particular  $u \in U^2_{NV}[I]$, which implies it has 
a limit in $L^2$ at time $T$, and thus it can be continued.

The proof of the final estimate in $\dot H^s$, namely \eqref{uSp-s}, is similar to the last argument but using part (b) of Lemma~\ref{lemma:tri-S} instead of part (a).
\end{proof}

\begin{proof}[Proof of Theorem \ref{thm:scattering}]

As in the previous proof, we rewrite our equation in the Duhamel form but with data from infinity
\[
u(t) = S_{NV}(t) u_+ - \int_t^\infty S_{NV}(t-s) N(u(s)) \,  ds. 
\]
Suppose
\[
\|u_+\|_{L^2} = R.
\]
Then 
\[
\| S_{NV}(t) u_+ \|_{S^p} \lesssim \| S_{NV}(t) u_+\|_{\dot X} \lesssim R.
\]
We choose the time interval $I = [T,\infty]$ small enough 
so that 
\[
\| S_{NV}(t) u_+ \|_{S^p[I]} \leq \epsilon.
\]
Denoting 
\[
v = u(t) - S_{NV}(t) u_+
\]
we rewrite the above Duhamel form in terms of $v$,
\begin{equation}
v(t) = - \int_t^\infty S_{NV}(t-s) N(v(s) + S_{NV}(s) u_0) \, ds.
\end{equation}
Finally we solve this using the contraction principle in $B_\delta(\dot X)$, exactly as in the proof of Theorem~\ref{thm:mNV.global}.
\end{proof}

\section{Nonlinear Gagliardo-Nirenberg Inequality}
\label{sec:gns}

In this section we prove novel bounds for 
the scattering transform 
\[
\calS: L^2(\R^2) \to L^2(\R^2)
\]
associated to the Davey-Stewartson system, and  studied in \cite{NRT2020}, see Theorem 1.2. 

We begin by recalling the definition of $\calS$.
Given a function $u \in L^2(\R^2)$ and $k \in\R^2\simeq\C$, let $m_\pm(z,k)$ be the solutions of the two equations
\begin{equation}\label{Jost}
\frac{\partial}{\partial\ol z} m_\pm =  \pm  e_{-k} u \ol
{m_\pm},
\end{equation}
with $m_\pm(\cdot,k)-1\in L^4(\R^2)$. (It is convenient to use the notation  $e_k(z) = e^{i(zk+\ol{zk})}$).
\
Existence and uniqueness of these Jost functions was established in  \cite{NRT2020} (Lemma 4.2), as a consequence of a pivotal result on
$\dbar$ equations with $L^2$ coefficients which we recall in Theorem ~\ref{t:Lq} below.
The scattering transform $\calS u$ of $u$ is then defined as:
\begin{equation}\label{defS}
\calS u (k) = \frac{1}{2\pi i}\int_{\R^2} e_k(z) \ol{u(z)}
\Big(m_+(z,k) + m_-(z,k)\Big) dz.
\end{equation}
In \cite{NRT2020} this was treated as a pseudodifferential operator with a non-smooth amplitude and shown to be bounded on $L^2$. The linearization of $\calS u$  at $u=0$ is essentially the Fourier transform:
\begin{equation}
\calS u (k)=\ol{\hat u(k)}+\mathcal O(u^2)
\end {equation}
where
\begin{equation}\label{FT}
\hat u (k) = \frac{i}{\pi}\int_{\R^2}e_{-k}(z)u(z)dz.
\end{equation}

The main properties of $\calS$ as a non-linear Fourier transform which were proved in \cite{NRT2020} have been summarized in 
 Theorem ~\ref{thm:NRT-S} of the Introduction.
 
In the setting of this paper, it would be ideal to have an 
$\dot W^{\frac14,4}$ bound for $\calS$, as in 
\eqref{W4}, generalizing the $L^4$ bound of \cite{NRT2020}. This remains an open problem for now. However, as it turns out, a slightly weaker version of \eqref{W4}, with slightly 
unbalanced choices of norms on the left and on the right, is still sufficient 
for the study of the modified Veselov-Novikov
problem in $L^2$.

Towards this goal, we consider two Strichartz pairs $(p,r)$ and $(p_1,r_1)$, i.e. so that 
\begin{equation}\label{2Str}
\frac{1}{p}+\frac{1}{r} = \frac{1}{p_1} + \frac{1}{r_1} = \frac12.
\end{equation}
These will be chosen so that the pair $(p,r)$
sits exactly in the middle between 
$(p_1,r_1)$ and $(\infty,2)$, where the latter corresponds to the energy norm.
This corresponds to
\begin{equation}\label{choose-r1}
\dfrac12 + \dfrac1{r_1} = \dfrac{2}r, \qquad 
\dfrac{1}{p_1}= \dfrac{2}{p}.
\end{equation}

For clarity, we will state two versions of 
our nonlinear GN inequality. The first one
is purely spatial, which is more natural in the context of the scattering transform  and is of independent interest as a complement to the nonlinear harmonic analysis theory in \cite{NRT2020}. The second one, which is a minor variation of the first, is its space-time counterpart. We begin with the spatial version, which is as follows:

\begin{theorem}\label{t:nonlinGNS}
Let $ 2 < r < 4$, 
 $\dfrac12 + \dfrac1{r_1} = \dfrac{2}r$ and $0<s<\dfrac12$.
Then the  scattering transform satisfies the fixed time 
bound 
\[
\|\mathcal  S u \|_{\dot B^{s,r}_{2}} \lesssim_{\|u\|_{L^2}} \|\hat u\|_{L^2}^\frac12 \| \hat u\|_{\dot B^{2s,r_1}_{2}}^\frac12
\]
\end{theorem}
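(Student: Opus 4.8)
\emph{Proof proposal.}

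The plan is to read the $\dot B^{s,r}_2$ norm of $\mathcal S u$ through finite differences in $k$, and to turn a shift in $k$ into a modulation of the potential, for which the scattering transform has good stability coming from \cite{NRT2020}. Since $0<2s<1$ we have the standard equivalences
\[
\|\mathcal S u\|_{\dot B^{s,r}_2}^2\approx\int_{\R^2}\frac{\|\mathcal S u(\cdot+h)-\mathcal S u\|_{L^r}^2}{|h|^{2+2s}}\,dh,\qquad \|\hat u\|_{\dot B^{2s,r_1}_2}^2\approx\int_{\R^2}\frac{\|\hat u(\cdot+h)-\hat u\|_{L^{r_1}}^2}{|h|^{2+4s}}\,dh.
\]
From \eqref{Jost}--\eqref{defS} one checks the modulation identity $\mathcal S u(k+h)=\mathcal S(e_{-h}u)(k)$, together with $\widehat{e_{-h}u}(k)=\hat u(k+h)$ and $\|(e_{-h}-1)u\|_{L^2}=c\|\hat u(\cdot+h)-\hat u\|_{L^2}$; thus each difference $\mathcal S u(\cdot+h)-\mathcal S u=\mathcal S(e_{-h}u)-\mathcal S(u)$ is a stability increment of $\mathcal S$ under a perturbation of the data of exactly the size of the $L^2$ modulus of continuity of $\hat u$. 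I note that the elementary Besov interpolation inequality $\|f\|_{\dot B^{s,r}_2}\lesssim\|f\|_{L^2}^{1/2}\|f\|_{\dot B^{2s,r_1}_2}^{1/2}$ --- valid because $\frac12+\frac1{r_1}=\frac2r$ makes the $L^r$-interpolation exponent between $L^2$ and $L^{r_1}$ equal to $\frac12$ --- would reduce the theorem (given the trivial $\|\mathcal S u\|_{L^2}\lesssim_{\|u\|_{L^2}}\|\hat u\|_{L^2}$) to the \emph{balanced} estimate $\|\mathcal S u\|_{\dot B^{2s,r_1}_2}\lesssim_{\|u\|_{L^2}}\|\hat u\|_{\dot B^{2s,r_1}_2}$; this appears to be out of reach, so I argue directly with the differences.

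For fixed $h$, interpolate $\|\mathcal S u(\cdot+h)-\mathcal S u\|_{L^r}\le\|\mathcal S u(\cdot+h)-\mathcal S u\|_{L^2}^{1/2}\|\mathcal S u(\cdot+h)-\mathcal S u\|_{L^{r_1}}^{1/2}$. The $L^2$ factor is immediate: $\mathcal S$ is a smooth, hence locally Lipschitz, map on $L^2$ (Theorem~\ref{thm:NRT-S}), so
\[
\|\mathcal S(e_{-h}u)-\mathcal S(u)\|_{L^2}\lesssim_{\|u\|_{L^2}}\|(e_{-h}-1)u\|_{L^2}=c\|\hat u(\cdot+h)-\hat u\|_{L^2},
\]
and this is the factor that supplies both the smoothness and the $L^2$ norm of $\hat u$ on the right. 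The $L^{r_1}$ factor is the real difficulty: the pointwise bound $|\mathcal S w(k)|\le C(\|w\|_{L^2})M\hat w(k)$ from \eqref{point-calS} only gives the $h$-uniform bound $\|\mathcal S(e_{-h}u)-\mathcal S(u)\|_{L^{r_1}}\lesssim_{\|u\|_{L^2}}\|\hat u\|_{L^{r_1}}$, with no gain as $h\to0$, which is too lossy after integration in $h$. To remedy this I would prove a quantitative $L^{r_1}$ modulus-of-continuity estimate for $\mathcal S$, by re-running the fixed-point construction of the Jost solutions $m_\pm$ from \cite{NRT2020} --- built on the $\bar\partial$-estimates recalled in Theorem~\ref{t:Lq} --- for the pair of potentials $u$ and $e_{-h}u$, and tracking the dependence of $m_\pm$, hence of the amplitude defining $\mathcal S$, on the modulation $e_{-h}$; the target is a bound for $\|\mathcal S(e_{-h}u)-\mathcal S(u)\|_{L^{r_1}}$ by the $L^{r_1}$ modulus of continuity of $\hat u$ (the higher-order contributions being absorbed with lower powers of the $L^2$ modulus of continuity), with constants depending only on $\|u\|_{L^2}$. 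One must respect throughout the scale invariance $\mathcal S(\lambda u(\lambda\,\cdot))(k)=\lambda^{-1}\mathcal S u(k/\lambda)$, also a consequence of \eqref{Jost}--\eqref{defS}, so that the estimate carries the homogeneous scaling forced by $\frac12+\frac1{r_1}=\frac2r$.

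Finally I would assemble the pieces: substituting the $L^2$ and $L^{r_1}$ difference bounds into the $L^r$-interpolation and then into the $h$-integral, a Cauchy--Schwarz in $h$ produces the two norms on the right of the theorem --- the $L^2$ factor, which carries the $h\to0$ gain, giving $\|\hat u\|_{L^2}$, and the $L^{r_1}$ factor giving $\|\hat u\|_{\dot B^{2s,r_1}_2}$ --- while the large-$|h|$ part is controlled by the same bounds after using the scale invariance to normalize $\|\hat u\|_{L^2}\approx\|\hat u\|_{\dot B^{2s,r_1}_2}$. Equivalently one may run the whole argument with Littlewood--Paley projections $P_j\mathcal S u=\int K_j(h)\bigl(\mathcal S(e_hu)-\mathcal S u\bigr)\,dh$ in place of raw differences, and sum the resulting dyadic inequalities using the off-diagonal decay supplied by the two bounds; this is also where the restrictions $2<r<4$ (so that $r_1\in(2,\infty)$ and the $L^r$-interpolation exponent is $\frac12$) and $s<\frac12$ (so that $2s<1$ and we stay in the first-difference regime) are used. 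I expect the main obstacle to be precisely the refined $L^{r_1}$ modulus-of-continuity estimate for $\mathcal S$: one has to extract a genuine gain as $h\to0$ from the nonlinear scattering map using only the $L^2$ size of the potential, which is not contained in the pointwise bound \eqref{point-calS} and requires a new, scale-invariant analysis of the Jost functions --- and it is exactly this loss that forces the unbalanced inequality of the theorem rather than the (apparently open) balanced estimate $\|\mathcal S u\|_{\dot B^{s,r}_2}\lesssim_{\|u\|_{L^2}}\|\hat u\|_{\dot B^{s,r}_2}$.
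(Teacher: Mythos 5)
Your overall architecture is close in spirit to the paper's: the paper also reduces matters to an interpolated $L^r$ bound for the increments $\calS u_1-\calS u_2=T_{u_1,u_2}(u_1-u_2)$ and also exploits the Galilean symmetry $\calS(e_hu)=\calS u(\cdot+h)$. But there are two genuine gaps. First, the entire analytic core — your ``quantitative $L^{r_1}$ modulus-of-continuity estimate for $\calS$'' — is left as a plan, and as literally stated it targets an estimate that is too strong: a bound of $\|\calS(e_{-h}u)-\calS u\|_{L^{r_1}}$ by the $L^{r_1}$ modulus of continuity of $\hat u$ is essentially the balanced bound \eqref{DS-Lr} for the operators $T_{u_1,u_2}$, which the paper explicitly records as open. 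What the paper actually proves (Proposition~\ref{p:Lp-interp}) is only the unbalanced version $\|T_{u_1,u_2}f\|_{L^r}\lesssim\|f\|_{L^2}^{1/2}\|\hat f\|_{L^{r_1}}^{1/2}$, and this already requires substantial new input: the unbalanced $\dbar$ inversion bound $\|L_u^{-1}f\|_{\dot H^s}\lesssim\|f\|_{\dot H^{s-1}}$ for all $0<s<1$ (Theorem~\ref{t:Lq-s}), new $L^p$ bounds on the Jost functions in terms of $M\hat u(k)$, and a five-term decomposition of $T_{u_1,u_2}$. None of this is supplied or even sketched at a level where one could check it; ``re-run the fixed point construction'' does not identify where the gain comes from.

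Second, even granting the per-increment estimate $\|\Delta_h\calS u\|_{L^r}\lesssim\|\Delta_h\hat u\|_{L^2}^{1/2}\|\Delta_h\hat u\|_{L^{r_1}}^{1/2}$, your assembly step fails. After Cauchy--Schwarz in $h$ with the weight split needed to produce $\|\hat u\|_{L^2}^{1/2}\|\hat u\|_{\dot B^{2s,r_1}_2}^{1/2}$, the first factor is $\bigl(\int\|\Delta_h\hat u\|_{L^2}^2|h|^{-2}\,dh\bigr)^{1/2}$, which diverges for general $u\in L^2$ (the integrand does not decay for $|h|\gtrsim1$, and first differences do not annihilate low frequencies in a square-summable way; the dyadic version $\sum_j\|\Delta_{2^{-j}}\hat u\|_{L^2}^2$ diverges for the same reason, since each frequency block of $\hat u$ contributes to all coarser scales). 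Scaling cannot rescue this, since a single rescaling cannot normalize all dyadic scales at once. The paper's decomposition is designed precisely to avoid this: instead of differencing the output in $k$, it telescopes the \emph{input}, setting $u_j=\phi(2^{-j}z)u$ and writing $\calS u=\sum_j(\calS u_{j+1}-\calS u_j)$. The increments $u_{j+1}-u_j$ live on disjoint (finitely overlapping) spatial annuli, so their $L^2$ norms are genuinely almost orthogonal and square-sum to $\|u\|_{L^2}^2$, which is exactly what the first Cauchy--Schwarz factor needs; the output pieces are then shown to be approximately frequency-localized at $|k|\sim2^j$ by pairing the $L^r$ bound on each increment with an $L^r$ bound on $\nabla_k\calS u_j=T_{u_j,u_j}(izu_j)$ obtained from the same Galilean symmetry you invoke. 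You would need to replace your difference-in-$h$ scheme by a decomposition of $u$ itself with this almost-orthogonality property for the argument to close.
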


We continue with the space-time version of the same result, which is stated directly in the context of the $S^p$ spaces:

\begin{theorem}\label{t:xtGNS} 
Let $(p,r)$ and $(p_1,r_1)$ be Strichartz pairs 
as in \eqref{2Str}, \eqref{choose-r1}
so that  $ 2 < r < 4$. Then the  scattering transform satisfies the space-time  bound
\begin{equation}
    \label{nonlinGNS.2}
        \|\mathcal S u \|_{\ell^2 L^{p} \dot W^{\frac1{p},r}} \lesssim _{\|u\|_{L^2}}\| \hat u \|_{\ell^2 L^\infty L^2}^\frac12 \| \hat u \|_{\ell^2 L^{p_1} \dot W^{\frac1{p_1},r_1}}^\frac12
\end{equation}

\end{theorem}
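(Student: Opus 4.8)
The plan is to derive Theorem~\ref{t:xtGNS} from the fixed-time spatial bound in Theorem~\ref{t:nonlinGNS} by integrating in time and invoking linear Strichartz estimates for $S_{NV}(t)$. First I would recall that $\mathcal S$ conjugates the mNV flow to the linear phase flow $e^{-it\omega_{NV}(D)}$ acting on $\widehat{\mathcal S u_0}$, so that at each fixed time $t$ the function $\mathcal S u(t,\cdot)$ is the scattering transform of $u(t,\cdot)$ and $\widehat{\mathcal S u}(t,k) = e^{-it\omega_{NV}(k)}\widehat{\mathcal S u_0}(k)$ up to the conjugation already built into $\widehat{\cdot}$. Applying Theorem~\ref{t:nonlinGNS} at each time $t$ with $s = 1/p$ (noting $2s = 2/p = 1/p_1$ by \eqref{choose-r1}, and $0 < 1/p < 1/2$ since $p > 2$), we obtain the pointwise-in-$t$ estimate
\[
\|\mathcal S u(t)\|_{\dot B^{1/p,r}_2} \lesssim_{\|u(t)\|_{L^2}} \|\widehat{u}(t)\|_{L^2}^{1/2}\, \|\widehat u(t)\|_{\dot B^{2/p,r_1}_2}^{1/2}.
\]
Since $\|u(t)\|_{L^2} = \|u_0\|_{L^2}$ is conserved along the mNV flow (equivalently $\|\mathcal S u(t)\|_{L^2}$ is conserved), the implicit constant is uniform in $t$.

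Next I would raise this to the $p$-th power, integrate in $t$, and apply H\"older in time with the split $1 = \tfrac12 \cdot \tfrac{2}{\infty}\cdot 0 + \ldots$; more precisely, writing the right side as a product of an $\ell^2 L^\infty L^2$-type factor to the power $1/2$ and an $\ell^2 L^{p_1}\dot W^{1/p_1,r_1}$-type factor to the power $1/2$, and using that $\tfrac12\cdot\tfrac1\infty + \tfrac12\cdot\tfrac1{p_1} = \tfrac12\cdot\tfrac{2}{p} = \tfrac1p$, H\"older in $t$ gives
\[
\|\mathcal S u\|_{L^p_t \dot B^{1/p,r}_2} \lesssim_{\|u_0\|_{L^2}} \|\widehat u\|_{L^\infty_t L^2}^{1/2}\,\|\widehat u\|_{L^{p_1}_t \dot W^{1/p_1,r_1}}^{1/2}.
\]
The Besov-vs-Sobolev discrepancy on the left is handled by recalling that in mixed-norm $L^p_t \dot B^{s,r}_2$ with the inner $\ell^2$ over frequencies we have, after Littlewood-Paley projection, $\|P_k \mathcal S u\|_{L^p_t L^r}$ square-summable in $k$, which is exactly the $\ell^2 L^p \dot W^{1/p,r}$ norm in the statement (here one uses $p \geq 2$ so that $L^p$ of an $\ell^2$ sum dominates $\ell^2$ of $L^p$ norms via Minkowski). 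The frequency-localized Besov structure should be propagated through the whole argument by running Theorem~\ref{t:nonlinGNS} with its $\ell^2$-over-$k$ Besov norms intact.

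The main obstacle, and the place requiring the most care, is the passage from the genuinely nonlinear spatial inequality to a clean space-time statement while keeping all norms on the right expressed purely in terms of the \emph{linear} data $\widehat{\mathcal S u_0}$ rather than $\widehat{u(t)}$. For this one uses that $\widehat u(t)$ and $\widehat{\mathcal S u}(t)$ differ only by a lower-order correction $\mathcal O(u^2)$ and, more robustly, that on the right-hand side one may freely replace $\widehat u(t)$ by $e^{-it\omega_{NV}(D)}\widehat{\mathcal S u_0}$ at the cost of constants depending only on $\|u_0\|_{L^2}$ — this is precisely where the diffeomorphism property and the conserved $L^2$ norm from Theorem~\ref{thm:NRT-S} enter, together with the linear Strichartz bound \eqref{Strichartz-NV} (equivalently the $S^{p_1}$ bound) applied to the linear evolution of $\widehat{\mathcal S u_0}$ to get $\|\widehat u\|_{L^{p_1}_t \dot W^{1/p_1,r_1}} \lesssim \|\widehat{\mathcal S u_0}\|_{L^2}$. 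Modulo these identifications, the remaining work — H\"older in time with the exponent bookkeeping dictated by \eqref{choose-r1}, and the Minkowski/Littlewood-Paley interchange to match $\ell^2 L^p \dot W^{1/p,r}$ — is routine.
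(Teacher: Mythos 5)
Your overall idea --- get the space--time bound from the fixed-time machinery by H\"older in the time variable, using $\tfrac1p=\tfrac12\cdot\tfrac1\infty+\tfrac12\cdot\tfrac1{p_1}$ --- is the right one, and it is essentially what the paper does. But the specific route you take, namely applying Theorem~\ref{t:nonlinGNS} as a black box at each fixed $t$ and then integrating, does not deliver the right-hand side of \eqref{nonlinGNS.2} in the stated form. The issue is the order of the frequency-$\ell^2$ sum and the time integral. Your route produces a right-hand side of the form $\|\hat u\|_{L^{p_1}_t \dot B^{2s,r_1}_{2}}$ (equivalently $L^{p_1}_t(\ell^2_k L^{r_1}_x)$, or after the embedding $L^{r_1}\hookrightarrow \dot B^{0,r_1}_2$ for $r_1\geq 2$, the plain norm $L^{p_1}_t\dot W^{1/p_1,r_1}$). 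Since $p_1>2$, Minkowski gives $L^{p_1}_t(\ell^2_k)\geq \ell^2_k(L^{p_1}_t)$, so this is a \emph{larger} norm than the $\ell^2 L^{p_1}\dot W^{1/p_1,r_1}$ norm claimed in the theorem; you end up proving a strictly weaker estimate. (You correctly note that Minkowski works in your favor on the left, where $p\geq 2$ lets you pass from $L^p_t\ell^2_k$ down to $\ell^2_k L^p_t$; the same inequality goes against you on the right.) Your closing remark that the ``$\ell^2$-over-$k$ Besov structure should be propagated through the whole argument'' is exactly the missing step, not an optional refinement: one must open up the proof of Theorem~\ref{t:nonlinGNS}, take the fixed-time bounds \eqref{e-high} and \eqref{e-low} on the individual telescoping pieces $\mathcal S u_{j+1}-\mathcal S u_j$ and $\nabla_k\mathcal S u_j$, apply H\"older in time to \emph{each piece separately} to obtain \eqref{e-high-xt}--\eqref{e-low-xt}, and only then perform the square-summation over $j$; Cauchy--Schwarz in $j$ then yields the product of the two $\ell^2$-refined space--time norms. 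This is precisely why the paper's proof ``forks from the proof of Theorem~\ref{t:nonlinGNS} at the very end'' rather than citing it.

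A secondary point: the portion of your proposal you flag as ``the main obstacle'' --- replacing $\hat u(t)$ by the linear evolution $e^{-it\omega_{NV}(D)}\widehat{\mathcal S u_0}$, invoking conservation of $\|u(t)\|_{L^2}$, and appealing to the linear Strichartz bound --- is not part of proving Theorem~\ref{t:xtGNS} at all. The theorem is a statement about $\mathcal S$ applied slice-wise to an arbitrary space--time function $u(t,x)$, with $\hat u(t,\cdot)$ on the right; no evolution equation is assumed. The identifications you describe belong to the \emph{application} of the theorem in Section~\ref{sec:gwp}, where $u$ is the mNV solution and the right-hand side is then controlled by the ($\ell^2$-refined) linear Strichartz estimate. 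Conflating the two makes the proposal read as if the hard step were elsewhere than it actually is.
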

Using the $S^p$ notation, the above estimate
reads
\[
\| \mathcal S u \|_{\ell^2 S^p}  \lesssim_{\|u\|_{L^2}}
 \|\hat u\|_{\ell^2 L^\infty L^2}^\frac12 \| \hat u\|_{\ell^2 S^{p_1}}^\frac12
\]
which suffices for the global result for the mNV system.

To prove Theorem \ref{t:nonlinGNS} we look at variations of $Su$  
\begin{equation}
 \mathcal Su_1 - \mathcal Su_2 =   T_{u_1,u_2} (u_1-u_2).
\end{equation}
Here $T_{u_1,u_2}$ denote the following pseudodifferential operators 
introduced in \cite{NRT2020}:
\begin{equation}\label{T_define}
T_{u_1,u_2} f (k)  = -\frac{i}{\pi}\Big(  \int  e_k(z)
\ol{f(z)}a(z,k)\dnuz -  \int
e_k(z){f(z)}b(z,k)\dnuz\Big),
\end{equation}
with
\begin {equation}
a(z,k) = \ol{m^1_{\ol{u_2}}(z,-k)}m^1_{u_1}(z,k)\\
\end{equation}
and
\begin{equation}
b(z,k) = \ol{m^2_{\ol{u_2}}(z,-k)}m^2_{u_1}(z,k).
\end{equation}

In particular,the differential of $S$
is given by 
\begin{equation}
DS(u) = T_{u,u}.    
\end{equation}
As proved in \cite{NRT2020}, the operators 
$T_{u_1,u_2}$ are bounded in $L^2$, precisely we have
\begin{equation}\label{DS-L2}
\|  T_{u_1,u_2}  f\|_{L^2} \lesssim C(\|u_1\|_{L^2},\|u_2\|_{L^2}) \|f\|_{L^2} 
\end{equation}

One may ask whether these operators also satisfy  the $L^p$ bounds
\begin{equation}\label{DS-Lr}
\|  T_{u_1,u_2}  f\|_{L^p} \lesssim C(\|u_1\|_{L^2},\|u_2\|_{L^2}) \|\hat f\|_{L^p} 
\end{equation}
for $p \neq 2$. We do not know the answer to this 
question, however we can prove an interpolated version,
which will suffice for the proof of Theorem~\ref{t:nonlinGNS}.

\begin{proposition}\label{p:Lp-interp}
Assume that $u_1,u_2 \in L^2$. Then with $r$ and $r_1$ as in Theorem~\ref{t:nonlinGNS} the following estimate holds:
\begin{equation}\label{Tq-Lr}
 \| T_{u_1,u_2} f\|_{L^{r}} \lesssim  C(\|u_1\|_{L^2},\|u_2\|_{L^2})  \| f\|_{L^2}^\frac12 \| \hat f\|_{L^{r_1}}^\frac12   
\end{equation}
\end{proposition}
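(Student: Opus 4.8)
The plan is to interpolate between two endpoint estimates for the operators $T_{u_1,u_2}$: the $L^2 \to L^2$ bound \eqref{DS-L2} already established in \cite{NRT2020}, and a new endpoint bound mapping into $L^4$ (or a space close to it), of the form
\[
\| T_{u_1,u_2} f\|_{L^{r_0}} \lesssim C(\|u_1\|_{L^2},\|u_2\|_{L^2}) \|\hat f\|_{L^{r_0}}
\]
for some $r_0$ with $2 < r_0 \le 4$; the target exponents $r$ and $r_1$ in \eqref{Tq-Lr} then arise by choosing the interpolation parameter $\theta = 1/2$, with $\frac1r = \frac12 \cdot \frac12 + \frac12 \cdot \frac1{r_0}$ matching the constraint $\frac12 + \frac1{r_1} = \frac2r$ once we identify $r_1 = r_0$. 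The reason to aim for exactly the $\theta = 1/2$ split, rather than arbitrary interpolation, is that the pointwise bound \eqref{point-calS} from \cite{NRT2020} — or rather its analogue for $T_{u_1,u_2}$ — is really an $L^4$-type statement: the maximal function bound $|\calS u(k)| \lesssim M\hat u(k)$ lives most naturally at $L^4$ because that is where the harmonic analysis in \cite{NRT2020} (Strichartz-type bounds for the non-smooth amplitude, square function estimates) is carried out. So the first step is to extract from \cite{NRT2020} the bilinear-amplitude version of that $L^4$ bound, i.e. to show $\| T_{u_1,u_2} f\|_{L^4} \lesssim C(\|u_1\|_{L^2},\|u_2\|_{L^2}) \|\hat f\|_{L^4}$, keeping careful track that the constant depends only on the $L^2$ norms of $u_1, u_2$ and not on $f$.

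The technical heart is understanding the amplitudes $a(z,k)$ and $b(z,k)$. These are products of Jost-type functions $m^j$, each of which differs from $1$ by an $L^4_z$ function whose size is controlled by $\|u_i\|_{L^2}$ via the pivotal $\dbar$-result (Theorem~\ref{t:Lq} referenced in the excerpt). Writing $a = 1 + (a-1)$ and similarly for $b$, the operator $T_{u_1,u_2}$ splits into (i) a "principal part" where the amplitude is $1$, which is exactly a (conjugated) Fourier transform composed with multiplication by $\ol{f}$ or $f$ — here one uses that $f \mapsto \widehat{\ol f}$ etc. are essentially the Fourier transform, so the $L^{r_0}$ bound in terms of $\|\hat f\|_{L^{r_0}}$ is immediate (this is the piece that forces the $\hat f$ on the right); and (ii) "error terms" where the amplitude carries a factor $(m^j - 1) \in L^4_z$. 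For the error terms the gain in integrability of the amplitude should let one absorb the loss, combining Hölder in $z$ with the $L^2$-boundedness machinery; the decay/smallness is quantified by the $L^2$ norms of $u_1, u_2$, which is what produces the constant $C(\|u_1\|_{L^2}, \|u_2\|_{L^2})$. One then interpolates the resulting $L^{r_0}$ bound with \eqref{DS-L2} by complex (Stein) interpolation, or more simply by a direct Hölder/duality argument splitting $f$, to land at the unbalanced estimate $\|f\|_{L^2}^{1/2}\|\hat f\|_{L^{r_1}}^{1/2}$.

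The main obstacle I expect is \emph{not} the interpolation step, which is formal, but rather establishing the clean $L^4 \to L^4$ (or nearby) endpoint with constant depending only on $\|u_1\|_{L^2},\|u_2\|_{L^2}$: in \cite{NRT2020} the relevant bounds are packaged as statements about $\calS u$ and $DS(u) = T_{u,u}$, and one must verify that the arguments go through verbatim for the genuinely bilinear $T_{u_1,u_2}$ with two distinct potentials, and that the $L^p$ (rather than $L^2$) version of the amplitude estimates — which were perhaps only stated at $L^2$ — actually hold. Concretely, the square-function / Carleson-measure type arguments used to prove the pointwise maximal bound \eqref{point-calS} must be re-run with the product amplitude $a, b$; the large-data nature of the problem means one cannot treat $m^j - 1$ perturbatively, so one needs the full strength of the global $\dbar$-theory to control these amplitudes in $L^4_z$ uniformly. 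Once that endpoint is in hand, choosing $r_1 = r_0$, $\theta = \tfrac12$ and reading off $r$ from \eqref{choose-r1} finishes the proof.
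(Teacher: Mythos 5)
Your plan hinges on first establishing a balanced endpoint bound $\| T_{u_1,u_2} f\|_{L^{r_0}} \lesssim C(\|u_1\|_{L^2},\|u_2\|_{L^2}) \|\hat f\|_{L^{r_0}}$ for some $r_0\neq 2$ and then interpolating with \eqref{DS-L2}. That endpoint is precisely the bound \eqref{DS-Lr}, which the paper states explicitly is an open problem (``We do not know the answer to this question''); the whole point of Proposition~\ref{p:Lp-interp} is to \emph{circumvent} it, not to derive the interpolated estimate from it. So the ``main obstacle'' you flag at the end is not a verification exercise in re-running the arguments of \cite{NRT2020} with two potentials --- it is a genuinely missing ingredient, and your proof does not go through as written. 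The structural reason the endpoint is out of reach is visible in the amplitude estimates: after the decomposition $a = 1 + (a-1)$ (which is in the right spirit --- the paper's five-piece splitting $P_1,\dots,P_5$ is exactly this expansion), every error term is controlled through $\dbar^{-1}(e_{-k}f)$, and the only available bound producing a maximal function of $\hat f$ is the $L^4$ estimate $\|\dbar^{-1}(e_{-k}f)\|_{L^4} \lesssim \|f\|_{L^2}^{1/2}(M\hat f(k))^{1/2}$, which already carries the unremovable factor $\|f\|_{L^2}^{1/2}$; there is no $L^2$ analogue with a pure maximal function on the right, since $\dbar^{-1}$ of an $L^2$ function is not in $L^2$. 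Thus the half--half split in \eqref{Tq-Lr} is not the output of an interpolation between two clean endpoints; it is forced at the level of each individual amplitude estimate.

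The paper's actual route is: (i) prove an unbalanced extension of the $L_u$-invertibility theorem, $\|L_u^{-1}f\|_{\dot H^s} \lesssim \|f\|_{\dot H^{s-1}}$ for $0<s<1$ (Theorem~\ref{t:Lq-s}), which requires a nontrivial frequency-truncation argument to gain smallness; (ii) derive $L^p$ bounds, $4\le p<\infty$, on $\dbar^{-1}(e_{-k}u)$ and on the Jost functions of the form $(M\hat u(k))^{2/p}$ times powers of $\|u\|_{L^2}$, by interpolating the $L^4$/maximal-function bound of \cite{NRT2020} against a $VMO$ bound; (iii) estimate each of $P_1,\dots,P_5$ \emph{pointwise in $k$} by $\|f\|_{L^2}^{1/2}(M\hat f(k))^{1/2}$ times products of $(M\hat u_i(k))^{\alpha}$, using H\"older in $z$, integration by parts, and $L^p$-boundedness of the Beurling transform for $P_5$; and (iv) apply H\"older in $k$ together with boundedness of the Hardy--Littlewood maximal operator on $L^{r_1}$. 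If you want to salvage your outline, you should replace the ``endpoint plus interpolation'' scheme by these pointwise-in-$k$ mixed bounds, where the interpolation is performed once and for all on the $\dbar^{-1}$ estimate rather than on the operator $T_{u_1,u_2}$ itself.
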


\begin{proof}
The proof of the proposition requires several steps, extending corresponding results in \cite{NRT2020}.

\subsection{ A \texorpdfstring{$\dbar$}{DBAR} bound}
An important role in \cite{NRT2020}
is played by the operator 
\begin{equation}\label{Lu}
   L_u w = \dbar u + u \bar w. 
\end{equation}
A key result in \cite{NRT2020} asserts that this operator is invertible in a symmetric setting:

\begin{theorem}\label{t:Lq}
Let $u \in L^2$.
Then we have
\begin{equation}
\| L_u^{-1} f \|_{\dot H^\frac12} \lesssim C(\|u\|_{L^2}) \| f\|_{\dot H^{-\frac12}}.   
\end{equation}
\end{theorem}

Here we need to prove an unbalanced extension of this result, namely

\begin{theorem}\label{t:Lq-s}
Let $u \in L^2$.
Then for any $0 < s < 1$ we have
\begin{equation}
\| L_u^{-1} f \|_{\dot H^s} \lesssim C(\|u\|_{L^2}) \| f\|_{\dot H^{s-1}}.  
\end{equation}
\end{theorem}

\begin{proof}
We will use the special case $s = \frac12$ proved in \cite{NRT2020} 
to prove the more general result. By duality it suffices to 
consider the case $s > \frac12$. 

A naive argument would be to try to apply the $s = \frac12$ case to the function $v = A^\sigma w$ where 
$A = |D|$,  $w = L_u^{-1} f$ and $\sigma = s -\frac12$. The function $v$ solves
\[
L_u v = A^\sigma f + R(u, \bar v),
\]
where the error term $R$ has the form
\[
R(u, v) =  (q- A^\sigma q A^{-\sigma}) v
\]
The argument would be concluded if we knew that 
\begin{equation}\label{good-R}
\| R(u,v) \|_{\dot H^{-\frac12}} \ll_{\|u\|_{L^2}} \|v\|_{\dot H^\frac12}
\end{equation}
But instead we only have 
\[
\| R(u,v) \|_{\dot H^{-\frac12}} \lesssim \|u\|_{L^2} \|v\|_{\dot H^\frac12}
\]
without smallness. The key to gain smallness is to 
first observe that the above inequality can be improved
to a Besov version
\begin{equation}\label{R-Besov}
\| R(u,v) \|_{\dot H^{-\frac12}} \lesssim \|u\|_{B^{0,2}_{\infty}} \|v\|_{\dot H^\frac12}
\end{equation}
This is proved using a standard Littlewood-Paley trichotomy,
and noting that there is an off-diagonal gain. 

Here we already win if $\|u\|_{B^{0,2}_{\infty}} \ll 1$.
But even if this is not the case, then the obstruction comes 
from finitely many dyadic frequencies of $u$, call them 
$j_1, \cdot, j_N$ with $N = N(\|u\|_{L^2})$. Then we decompose $u$ into a good and a bad portion,
\[
u = u_g +u_b
\]
where 
\begin{equation}
\| u_b\|_{B^{0,2}_{\infty}} \ll_{\|u\|_{L^2}}  1
\end{equation}
and 
\begin{equation}
 u_b = \sum_{k=1}^N u_{j_k}   
\end{equation}

To avoid the above difficulty, we modify the operator $A$ above in order  to avoid large contributions from these frequencies. Precisely, we redefine $A$ to be the multiplier $A(|D|)$ where its symbol is chosen to satisfy $a(0) = 0$ and
\[
a'(\xi) =
\left\{
\begin{aligned}
& 0 \qquad  \xi \in \bigcup \,  [2^{j_k -m},2^{j_k +m}]
\\
& 1 \qquad \text{otherwise}    
\end{aligned}
\right.
\]
where the parameter $m$ is chosen large enough. This also 
changes the expression for $R$ above.

We now need two ingredients:

\bigskip

(i) The estimate \eqref{R-Besov} remains valid, 
uniformly with respect to our choice of $A$ above.
This is a consequence of the uniform bound
\begin{equation}
1 \leq \frac{a(\xi)}{a(\eta)} \leq \frac{\xi}{\eta}, \qquad 0 < \eta < \xi.    
\end{equation}
This allows us to treat the contribution of $u_{g}$ perturbatively.

\bigskip

(ii) The contribution of $u_b$ is now small,
\begin{equation}
\| R(u_b,v) \|_{\dot H^{-\frac12}} \lesssim 2^{-\frac{m}2} 
\| u\|_{L^2} \|v\|_{\dot H^\frac12}
\end{equation}
This is due to the fact that the only nonzero contributions in $R$  occur only when the $u_b$ and $v$ inputs are $m$ separated. 

\bigskip

The two ingredients above allow us to choose $m$ large enough, depending only on $\|u\|_{L^2}$, so that 
\eqref{good-R} holds. This in turn implies that we can treat $R$ perturbatively, and obtain the bound
\[
\| A^{\sigma} w\|_{\dot H^\frac12} \lesssim \| A^\sigma f\|_{\dot H^{-\frac12}}
\]
To obtain the conclusion of the theorem, it suffices to observe that for the symbol $A$ we have the bound
\[
c(\|u\|_{L^2}) \xi \leq a(\xi) \leq \xi.
\]

\subsection{{Estimates on the Jost functions}}

\begin{lemma}
\label{l:dbar}
 For $u \in L^2$ and $4\leq p < \infty$
 we have
 \begin{equation}\label{dbar-}
 \| \dbar^{-1} e_{-k} u\|_{L^p}  \lesssim 
 \|u\|_{L^2}^{1-\frac2p} (M\hat u(k))^{\frac2p} 
 \end{equation}
\end{lemma}
\begin{proof}
This is obtained by interpolating between 
\begin{equation}
 \| \dbar^{-1} e_{-k} u\|_{L^4}  \lesssim 
 \|u\|_{L^2}^{\frac12} (M\hat u(k))^{\frac12} 
 \end{equation}
 which is Corollary 2.2 in \cite{NRT2020},
 and 
 \[
 \| \dbar^{-1} e_{-k} u\|_{VMO}  \lesssim 
 \|u\|_{L^2}
\]
which is immediate since $\dot H^1 \subset VMO$.
\end{proof}

\begin{lemma}\label{l:u-Lp}
Suppose that $u,f \in L^2$. Then for every $k$ 
with $M \hat f(k) < \infty$ there is an unique solution 
$w(\cdot,k) \in L^4$ to
\begin{equation}
 L_{e_{-k} u} w = e_{-k} f  
\end{equation}
Moreover, for any $4 \leq p < \infty$ we have
\begin{equation}
\|  w(\cdot,k) \|_{L^p} \lesssim_{\|u\|_{L^2}} 
\|f\|_{L^2}^{1-\frac2p} \|M\hat f(k)^\frac2p
\end{equation}
\end{lemma}

\begin{proof}
The first part is Lemma~4.1 in \cite{NRT2020}. For the second part, we peel off the leading term of $w$ and denote
\[
v = w - \dbar^{-1} (e_{-k} f)
\]
This solves
\[
L_{e_{-k u}} w = - e_{-k} u \partial^{-1}(e_{k} \bar f)
\]
Using Theorem~\ref{t:Lq-s} for $s = 1-\dfrac2p$ and Sobolev embeddings we have
\[
\|v\|_{L^p} \lesssim \|v\|_{H^s} \lesssim_{\|u\|_{L^2}}
\| u \partial^{-1}(e_{k} \bar f)\|_{\dot H^{s-1}}
\lesssim _{\|u\|_{L^2}}
\| u \partial^{-1}(e_{k} \bar f)\|_{L^{p^*}}
\]
with
\[
\frac{1}{p^*} = \frac12 + \frac1p
\]
Then we obtain
\[
\|w\|_{L^p} \lesssim _{\|u\|_{L^2}}
\|\partial^{-1}(e_{k} \bar f)\|_{L^{p}}
\]
and conclude using Lemma~\ref{l:dbar}.

\end{proof}

\begin{lemma}\label{l:m}
 Suppose $u \in L^2$, $M\hat u(k) < \infty$ and $4 \leq p < \infty$. Then for the Jost solutions constructed in Lemma~4.2 of \cite{NRT2020} we have
\begin{equation}\label{ma}
 \| m^\pm(\cdot,k)-1\|_{L^p} + \|  m^1(\cdot,k)-1\|_{L^p}
 +\|m^2(\cdot, k)\|_{L^p} 
 \lesssim_{\|u\|_{L^2}} (M\hat u(k))^\frac2p
\end{equation}
and
\begin{equation}\label{mb}
 \| \dbar m^1(\cdot,k)-1\|_{L^{p^*}} 
 \lesssim_{\|u\|_{L^2}} (M\hat u(k))^\frac2p
\end{equation}
 
\end{lemma}
\begin{proof}
    
\end{proof}
The functions $r^{\pm} = m^{\pm}-1$ solve
\[
L_{\mp e_{-k} u} r_{\pm} = \pm e_{-k} u.
\]

Then \eqref{ma} follows from Lemma~\ref{l:u-Lp}, and 
\eqref{mb} follows from \eqref{ma} and the relation
\[
\dbar m_1 = um_2.
\]
    
\end{proof}

\subsection{ The interpolated \texorpdfstring{$L^p$}{Lp} bound}

Here we prove Proposition~\ref{p:Lp-interp}. For this we recall from \cite{NRT2020} 
 the decomposition of the operator $T_{u_1,u_2}$ into five components,
\begin{align*}
T_{u_1,u_2} f  = -\frac{i}{\pi}\Big(P_1{f}(k) +
P_2{f}(k) + P_3{f}(k) + P_4{f}(k)
+ P_5{f}(k)\Big)
\end{align*}
where 
\begin{align*}
P_1{f}(k) &= \int e_k\ol{f}(\ol{
m^1_{\ol{u_2}}}-1)({m^1_{u_1}}-1)dz\\
P_2{f}(k) &=\int e_k\ol{f}(\ol{ m^1_{\ol{u_2}}}-1)dz\\
P_3{f}(k) &=\int e_k\ol{f}({m^1_{u_1}}-1)dz\\
P_4{f}(k) &=\int e_k\ol{f}dz\\
P_5{f}(k) &=-\int e_{-k}
{f}\ol{m^2_{\ol{u_2}}}{m^2_{u_1}}dz.
\end{align*}
We will separately estimate these components, in increasing order of difficulty:
\bigskip

\textbf{a) The bound for $P_4$:}
This is done by interpolation,
\[
\| P_4 f\|_{L^r} \leq \| \hat f\|_{L^2}^\frac12
\| \hat f\|_{L^{r_1}}^\frac12
\]

\bigskip

\textbf{b) The bounds for $P_3,P_2$:}
Here we use H\"older's inequality followed by Lemma~\ref{l:dbar} and 
Lemma~\ref{l:m} to estimate
\[
|P_3 f(k)| \lesssim \| \dbar^{-1} (e_k f)\|_{L^4}
\| \dbar m^1_{u_1} \|_{L^\frac43}
\lesssim_{\|u\|_{L^2}} \|f\|_{L^2}^\frac12 ( M\hat f(k))^\frac12(M\hat u(k))^\frac12
\]
which by H\"older's inequality in $k$ yields again 
\[
\| P_3 f\|_{L^r} \lesssim_{\|u\|_{L^2}} \| f\|_{L^2}^\frac12
\| \hat f\|_{L^{r_1}}^\frac12
\]
The bound for $P_2$ is similar.

\bigskip

\textbf{c) The bound for $P_1$:}
After one integration by parts we have 
\[
|P_1{f}(k)| \lesssim \int |\dbar^{-1}(e_k\ol{f})|
(|\dbar \ol{m^1_{{u_2}}}||{m^1_{u_1}}-1|
+|m^1_{\ol{u_2}}-1|\dbar {m^1_{u_1}}|
dz
\]
Estimating the three types of factors via \eqref{dbar-}, \eqref{ma} and \eqref{mb} we obtain 
\[
|P_1{f}(k)| \lesssim_{\|u_{1,2}\|_{L^2}} 
\|f\|_{L^2}^{1-\frac2{p_0}} \|(M\hat f(k))^{\frac2{p_0}}
|(M\hat u_1(k))^{\frac2{p_1}}|(M\hat u_2(k))^{\frac2{p_2}}
\]
provided that
\[
\frac{1}{p_0}+\frac1{p_1}+\frac{1}p_2 = \frac12, \qquad 4 \leq p_j < \infty. 
\]
 Here we choose $p_0 = 4$, $p_1=p_2=8$ and apply H\"older's inequality to arrive at
 \[
\|P_1\|_{L^r} \lesssim \|f\|_{L^2}^\frac12 
\| \hat f\|_{L^{r_1}}^\frac12 \| u_1\|_{L^2}^\frac14 
\| u_2\|_{L^2}^\frac14. 
 \]

\bigskip

\textbf{d) The bound for $P_5$:} Here we recall that $m^2$ 
solves
\[
\partial (e_k m^2) = e_k \bar u m^1
\]
Then we rewrite $P_5 f$ in the form 
\[
\begin{aligned}
P_5{f}(k) &=-\int e_{-k}
{f}\, \ol{ e_k m^2_{\ol{u_2}}}\, {e_k m^2_{u_1}}dz
\\
 &=-\int \dbar^{-1}(e_{-k}{f}) 
(\ol{ \partial (e_k m^2_{\ol{u_2}})}{e_k m^2_{u_1}}+
\ol{ e_k m^2_{\ol{u_2}}}{\dbar(e_k m^2_{u_1}}))
dz
\\
 &=-\int \dbar^{-1}(e_{-k}{f}) 
(  q \bar m^1_{\ol{u_2}}  m^2_{u_1}+
\ol{ e_z m^2_{\ol{u_2}}}{(\partial^{-1} \dbar)(e_z \bar q m^1_{u_1}}))
dz
\end{aligned} 
\]
Here we split $m^1 = (m^1-1)+1$ and use the $L^p$ boundedness
of the Beurling transform and H\"older's inequality to estimate
\[
|P_5{f}(k)| \lesssim \|\dbar^{-1} (e_{-k} f)\|_{L^4}
\| u_{1,2}\|_{L^2} (\|  m^2_{u_1}\|_{L^4}
+ \|  m^1_{{u_2}}\|_{L^8}  \|m^2_{u_1}\|_{L^8}
+ \|  m^2_{{u_2}}\|_{L^4} + \|m^2_{{u_2}}\|_{L^8} \| m^1_{u_1}\|_{L^8})
\]
Now we apply \eqref{ma} and \eqref{dbar-} to arrive at
\[
|P_5{f}(k)| \lesssim \| f\|_{L^2}^\frac12 (M\hat f(k))^\frac12 ( (M\hat u_1(k))^\frac12+ (M\hat u_2(k))^\frac12)
\]
Finally, by H\"older's inequality we arrive at
\[
\| P_5 f\|_{L^r} \lesssim_{\|u_{1,2}\|_{L^2}} \|f\|_{L^2}^\frac12 \| \hat f\|_{L^{r_1}}^\frac12
\]
as needed.

\subsection{ The proof of the GN inequality}

Here we use Proposition~\ref{p:Lp-interp} in order to prove 
Theorem~\ref{t:nonlinGNS}.

To estimate $\calS u$ we  consider the sequence (continuum) of data 
\[
u_{j} = \phi( 2^{-j}z) u
\]
where $\phi$ is a smooth test function which equals $1$ around $z = 0$. Then 
\begin{equation}\label{telescope}
\calS u = \lim_{j \to \infty} \calS u_j = \sum_{j \in \Z} \calS u_{j+1} - \calS u_j
\end{equation}
This is akin to a nonlinear paradifferential expansion for the nonlinear operator $S$ as a function of $\hat u$.

We will estimate the terms in the sum in $L^r$; we contend that they are essentially localized at frequency $2^j$. To prove this we consider first higher frequencies
and then lower frequencies.

For higher frequencies we neglect the difference and bound each term separately, using the linearization of $S$.
We have the Galilean symmetry
\[
S (e_h u) = Su(\cdot+h)
\]
therefore, differentiating in $h$, we get
\[
\nabla_k Su = T_{u,u} (i z q)
\]
Applying  Proposition~\ref{p:Lp-interp}
with $u=u_j$ we obtain
\begin{equation}\label{e-high}
  \| \nabla_k Su_j\|_{L^r} \lesssim_{\|u\|_{L^2} } \|\partial_k \hat u_j\|_{L^2}^\frac12 \| \partial_k \hat u_j\|_{L^{r_1}}^\frac12
\end{equation}

On the other hand for differences we directly 
apply Proposition~\ref{p:Lp-interp} 
and estimate
\begin{equation}\label{e-low}
\| S u_{j+1} - S u_j \|_{L^r} \lesssim _{\|u\|_{L^2} }
\|  u_{j+1}-u_j \|_{L^2}^\frac12 
\|  \hat u_{j+1} - \hat u_j \|_{L^{r_1}}^\frac12 
\end{equation}

To estimate $Su$ in our desired Besov space we interpret the telescopic sum in \eqref{telescope} as a Littlewood-Paley decomposition with tails, and write
\[
\begin{aligned}
\| Su\|_{B^{s,r}_{2}}^2 
\lesssim & \  \sum_j 2^{2js} \| Su_{j+1} -Su_j\|_{L^r}^2+ 2^{2j(s-1)}\| \nabla(Su_{j+1} -Su_j)\|_{L^r}^2
\\
\lesssim & \sum_j  2^{2j s}
\|  u_{j+1}-u_j \|_{L^2}
\|  \hat u_{j+1} - \hat u_j \|_{L^{r_1}} 
+ 2^{2j(s-1)}\|\partial_k \hat u_j\|_{L^2} \| \partial_k \hat u_j\|_{L^{r_1}}
\\
\lesssim & \left(\sum_j 
  \|  u_{j+1}-u_j \|_{L^2}^2
  + 2^{-2j} \|\partial_k \hat u_j\|_{L^2}^2\right)^\frac12\\
  &\quad \times 
\left( \sum_j  2^{4js}
\|  \hat u_{j+1} - \hat u_j \|_{L^{r_1}}^2+
2^{2j(2s-1)}\| \partial_k \hat u_j\|_{L^{r_1}}^2
 \right)^\frac12
\\
\approx & \  \| u\|_{L^2} \| \hat u\|_{B^{2s,r}_{2}} 
\end{aligned}
\]
where the very last bound requires $0 < s < \frac12$.
Allowing tails here (first line in the computations above) corresponds to the fact that 
$Su_{j+1}-Su_j$ are not exactly localized at frequency $2^j$, but instead have tails at other frequencies, which however decay. This can be captured
by an inequality of the form 
\[
\| P_k f_j\|_{B^{s,r}_{2}} 
\lesssim  2^{-c|j-k|} (2^{js} \| f_j\|_{L^r} + 2^{j(s-1)}\| \nabla f_j\|_{L^r})
\]
for appropriately chosen positive $c$.

The first line of the previous inequality relies on these bounds applied to $f_j = Su_{j+1}-Su_j$.
For this we have 
\[
\| P_k \sum f_j\|_{B^{s,r}_{2}} 
\lesssim  2^{-c|j-k|} (2^{js} \| f_j\|_{L^r} + 2^{j(s-1)}\| \nabla f_j\|_{L^r})
\]
which we need to square sum. On the right we have a
discrete convolution with the kernel $2^{-c|l|}$ which is summable, so the first line of the previous inequality directly follows.

\end{proof}

\subsection{The space-time version of GN}

In this section we prove the space-time version of the nonlinear GN inequality,
namely Theorem~\ref{t:xtGNS}.
The proof is a minor variation of the fixed time bound; precisely,
the proof forks from the proof of Theorem~\ref{t:nonlinGNS} at the very end. From the fixed time bounds \eqref{e-high} and \eqref{e-low} we 
directly get by H\"older's inequality 
\begin{equation}\label{e-high-xt}
  \| \nabla_k Su_j\|_{L^p L^r} \lesssim_{\|u\|_{L^2} } \|\partial_k \hat u_j\|_{L^\infty L^2}^\frac12 \| \partial_k \hat u_j\|_{L^{q_1}L^{r_1}}^\frac12
\end{equation}
\begin{equation}\label{e-low-xt}
\| S u_{j+1} - S u_j \|_{L^q L^r} \lesssim _{\|u\|_{L^2} }
\|  u_{j+1}-u_j \|_{L^\infty L^2}^\frac12 
\|  \hat u_{j+1} - \hat u_j \|_{L^{q_1} L^{r_1}}^\frac12 
\end{equation}
and then complete the proof exactly as for Theorem~\ref{t:nonlinGNS}.

\section{The Modified Novikov-Veselov Equation}
\label{sec:gwp}

The goal of this section is to prove Theorem~\ref{thm:mNV}. Our starting point is provided by the solutions to the mNV equation obtained by inverse scattering. If $\calS$ denotes the scattering map, it was shown in \cite[Proposition 5.1]{Perry2014} that if $u_0 \in S(\R^2)$, then the function 
\begin{equation}
    \label{mNV.sol.IS}
    u(t) = \calS \left(e^{-it((\diamond)^3+(\overline{\diamond})^3)} \calS (u_0)(\diamond) \right)
\end{equation}
gives a global, classical solution of the mNV equation
which is uniformly bounded in $L^2$ and belongs to $\calS$ at each time.\footnote{The difference in sign in the exponential here is due to a different normalization of the scattering transform, following \cite{NRT2020} rather than \cite{Perry2014}.}

Applying the linear Strichartz estimates to the linear NV
flow we obtain
\begin{equation}
  \| \calF\left(e^{-it((\diamond)^3+(\overline{\diamond})^3)} \calS (u_0)(\diamond)  \right)\|_{\ell^2 L^{p_1} \dot W^{\frac1{p_1},r_1}} \lesssim \| \calS (u_0)\|_{L^2}
  = \|u_0\|_{L^2}
\end{equation}
Then by the nonlinear GN bound in Theorem~\ref{t:xtGNS}
we obtain the global in time estimate

\begin{equation}\label{global-se}
\| u\|_{\ell^2 L^{p} \dot W^{\frac1{p},r}} \lesssim_{\|u_0\|_{L^2}}
  \|u_0\|_{L^2}   
\end{equation}

Now we consider $L^2$ initial data. For this we have 
two distinct strategies to obtain solutions:

\begin{enumerate}[label=(\alph*)]
\item \emph{By density:} Since the Schwartz space $S$ is dense in $L^2$ and the scattering transform $\calS$
is smooth in $L^2$, the solution operator defined by \eqref{mNV.sol.IS} extends to a smooth map 
\[
L^2 \ni u_0 \to u \in C(\R; L^2). 
\]

\item \emph{By local well-posedness:} Given any $L^2$ 
initial data $u_0$, by Theorem~\ref{thm:mNV.global} there exists a small time $T > 0$ so that for each 
$\tilde u_0$ near $U_0$ there exists a unique solution $\tilde u \in U^2_{NV}[0,T;L^2]$, depending smoothly on $u_0$. Further
this result extends for as long as the 
$\ell^2 L^{p}[0,T; \dot W^{\frac1{p},r}]
$ remains finite.
\end{enumerate}

We contend that the two solutions are one and the same, and that, in particular, the solution provided by (b) is global in time.

Suppose $u_0$ in $L^2$ and denote
\[
\|u_0\|_{L^2}^2 = E,
\]
on which the implicit constants will depend in all estimates that follow.

Let $u^{n}_0 \in S$ 
so that $u^n_0 \to u_0$ in $L^2$. Let $T_{max} \in (0,\infty]$
the maximal time up to which the solution $u$ in (b)
associated to the initial data $u_0$ has a finite $\ell^2 L^{p} \dot W^{\frac1{p},r}$ norm.
For $T < T_{max}$, both the solutions provided in (a) and in (b) are the unique limits of the Schwartz solutions $u^{n}$ in $[0,T]$, so they must agree. 
Furthermore, by (b) we have the convergence 
\[
u^n \to u \qquad \text{in} \ U^2_{NV}[0,T;L^2]
\]
This in turn implies convergence in $\ell^2 L^{p}[0,T; \dot W^{\frac1{p},r}]$, which  by \eqref{global-se}
implies the estimate
\begin{equation}
  \| u\|_{\ell^2 L^{p}[0,T; \dot W^{\frac1{p},r}]} \lesssim_E
  \|u_0\|_{L^2}    
\end{equation}
Here it is essential the the implicit constant does not depend on $T$ at all. Passing to the limit $T \to T_{max}$, we arrive at the estimate 
\begin{equation}
  \| u\|_{\ell^2 L^{p}[0,T_{max}; \dot W^{\frac1{p},r}]} \lesssim_E
  \|u_0\|_{L^2}    
\end{equation}

If $T_{max} < \infty$, this bound, by Theorem~\ref{thm:mNV.global}, implies that the local solution in (b) extends beyond $T_{max}$, which contradicts the maximality of $T_{max}$. Hence 
we must have $T_{max} = \infty$, which implies 
that the solutions defined by (a) and (b) agree globally, and that the bound \eqref{global-se} hold for all $L^2$ solutions. By the estimate \eqref{uSp},
this shows that the global solutions satisfy
\begin{equation}
\|u\|_{\dot X} \lesssim_E \|u_0\|_{L^2}    
\end{equation}
globally in time. 

We now argue that the data to solution map 
\[
L^2 \ni u_0 \to u \in \dot X
\]
is smooth. Given $u_0 \in L^2$ and $\epsilon > 0$, we consider
a partition of the time axis into finitely many subintervals $I_j$ (of which two are unbounded)
so that 
\[
\| u \|_{\ell^2 L^{p}[I_j; \dot W^{\frac1{p},r}]} \leq \epsilon.
\]
If $\epsilon$ is small enough, depending only on $E$,
then the proof of Theorem~\ref{thm:mNV.global}
shows that the data to solution map is smooth in a neighbourhood of the solution $u$. Reiterating, the desired smoothness follows globally in time, concluding the 
proof of part (iii) of the theorem.

The $H^s$ bound in part (iv) of the theorem is a direct consequence of part (c) of Theorem~\ref{thm:mNV.global}.

To prove that the solutions scatter, we recall that for $u \in \dot X$ the limits
\begin{equation}\label{asympt}
u_{\pm} = \lim_{t \to \pm \infty} S_{NV}(-t) u(t)  
\end{equation}
exist in $L^2$. By the prior argument, these limits 
are smooth as functions of $u_0 \in L^2$.  
The formula 
\begin{equation}
u_{\pm} = \widecheck{\calS(u_0)}
\end{equation}
for these limits is proved by an argument similar to that used in \cite{NRT2020} to obtain the corresponding expression for DS-II. 

It then follows from the properties of the scattering transform 
$\calS$ (see Theorem~\ref{thm:NRT-S}) that the resulting wave operators are diffeomorphisms of $L^2$. In particular, these wave operators are complete, i.e. for each $u_{+} \in L^2$ there exists a unique global $L^2$ solution $u \in \dot X$, depending smoothly on $u_+$, so that \eqref{asympt} holds, and similarly for $u_-$.

Finally, the pointwise bound \eqref{ptws-bound} follows from \eqref{mNV.sol.IS} and the pointwise bound  \eqref{point-calS} on the scattering transform $\calS$ (as part of a more general class of pseudodifferential operators) established in \cite{NRT2020}.

This concludes the proof of Theorem~\ref{thm:mNV}.

\section{The Novikov-Veselov Equation}
\label{sec:nv}

In this section, we will study the Cauchy problem for the Novikov-Veselov equation with initial data in the range of the Miura map. We recall the Cauchy problem for the NV equation:
\begin{equation}
	\label{NV.Cauchy}
	\left\{\begin{aligned}
	& q_t + \diff^3 q + \dbar^3 q - \frac34(q \dbar^{-1} \diff q)-\frac34 (q \diff^{-1} \dbar q) = 0\\
		& \left. q \right|_{t=0} = q_0
	\end{aligned}	\right.
\end{equation}
for a real valued function $q$. 

We recall the connection between the mNV and NV  equations via the Miura map 
\begin{equation}
	\label{mNV.Miura}
	\calM(u)= 2 \diff u + |u|^2,
\end{equation}
where the domain of $\calM$ is understood to consist of functions $u$ in $L^2$ which satisfy the algebraic condition
\begin{equation}
\label{u-constraint.bis}
\diff u = \overline{\diff u},
\end{equation}
This has been rigorously verified at the level of more regular solutions, see for instance the work \cite{Perry2014},  which  works with functions $u \in H^{2,1}(\R^2) \cap L^1(\R^2)$ satisfying both \eqref{u-constraint.bis} and 
\[
\int_{\R^2} u \, dx \, dy =0.
\]

In our case we have the mNV global well-posedness result at the $L^2$ level, 
which corresponds to interpreting the  Miura map as a smooth map
\begin{equation}
L^2 \ni u \Longrightarrow q = \calM(u) \in \dot H^{-1} + L^1.    
\end{equation}
This naturally leads to the study of the NV flow with initial data $q_0 \in 
\dot H^{-1} + L^1$. 

The class of functions $u \in L^2$ satisfying the constraint \eqref{u-constraint.bis} is easily seen to be invariant with respect to the mNV
flow, so applying the Miura map to a solution to the mNV problem at the above regularity will heuristically yield solutions for the NV problem. 
This in principle yields a well-posedness result for the 
NV equation, but with an important caveat, namely that 
this well-posedness result would be restricted to the range of the Miura map.  
The two key questions that need to be answered in this context are
\begin{enumerate}[label=(\roman*)]
\item Study the invertibility of the Miura map.
\item Describe the range of the Miura map as a subset of $\dot H^{-1} + L^1$.
\end{enumerate}
In the next subsection we will show that, while not surjective, the Miura map has a continuous inverse. In Section~\ref{sec:spectral} we fully describe the range of the Miura map as the set 
of potentials $q \in \dot H^{-1} + L^1$ for which the operator $H_q:= -\Delta+q$ is nonnegative. In the final subsection we apply these properties to the study of the global well-posedness for the NV problem.

\subsection{ Inverting the Miura map}
Our main result here is the following:

\begin{theorem}\label{T-Minverse}
a) The Miura map is injective from $L^2$ to $\dot H^{-1} + L^1$.

b) The inverse of the Miura map is continuous on $\calM(L^2)$.
    
\end{theorem}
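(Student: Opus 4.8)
The plan is to treat injectivity and continuity of the inverse as two separate analytic problems, both governed by the linear $\dbar$-theory developed earlier in the excerpt (Theorem~\ref{t:Lq} and Theorem~\ref{t:Lq-s}). For part (a), suppose $u_1, u_2 \in L^2$ satisfy the constraint \eqref{u-constraint.bis} and $\calM(u_1) = \calM(u_2)$, i.e. $2\diff(u_1 - u_2) + |u_1|^2 - |u_2|^2 = 0$. Writing $w = u_1 - u_2$ and expanding $|u_1|^2 - |u_2|^2 = \mathrm{Re}\,(\bar u_1 + \bar u_2) w$ after a polarization, one gets a linear equation for $w$ of the schematic form $\diff w = -\tfrac12 (\text{bounded-in-}L^2\text{ coefficients})\cdot w$, i.e. $w$ lies in the kernel of an operator of the type $L_a$ with $a \in L^2$ (after moving to the $\dbar$ picture by conjugation or by swapping the roles of $\diff$ and $\dbar$ via complex conjugation, using the constraint to make the two consistent). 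Since $w \in L^2 \subset \dot H^{-1} + \dot H^{1/2}$ and the relevant operator is injective on the natural scale-invariant space by Theorem~\ref{t:Lq} (or its unbalanced version Theorem~\ref{t:Lq-s}), one concludes $w = 0$. The one genuinely delicate point is bookkeeping the function spaces: $|u_1|^2 - |u_2|^2$ is only in $L^1$ a priori, so $w$ solves a $\dbar$-equation with an $L^1$ right-hand side for the inhomogeneous piece and an $L^2$-coefficient homogeneous piece, and one must check that the uniqueness class from the cited theorems actually contains $w$; this should follow because $w \in L^2$ and $\partial w \in \dot H^{-1} + L^1 \subset \dot H^{-1} + \dot H^{-1/2}$, placing $w$ in $\dot H^{1/2} + \dot H^{-1/2}$-type spaces where the inversion theorems apply after a suitable frequency splitting.

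For part (b), the natural approach is a compactness/closed-graph type argument combined with quantitative continuity of the relevant solution operators. Given $q_n = \calM(u_n) \to q = \calM(u)$ in $\dot H^{-1} + L^1$, we know from the identity $\|u_n\|_{L^2}^2 = \int q_n\, dx$ (the algebraic identity recalled in the introduction) that $\|u_n\|_{L^2}$ is bounded, and in fact converges to $\|u\|_{L^2}$. One then wants to show $u_n \to u$ in $L^2$. The difference $w_n = u_n - u$ again solves a linear $\dbar$-type equation, now of the form $2\diff w_n + (\text{coefficients involving } u_n, u)\, w_n = q_n - q$, so $w_n = L^{-1}(q_n - q)$ for an invertible operator $L = L_{u_n,u}$ whose inverse is bounded from $\dot H^{-1}+L^1$-type spaces to $L^2$-type spaces \emph{with a bound depending only on $\|u_n\|_{L^2}, \|u\|_{L^2}$} (uniformly, by Theorem~\ref{t:Lq-s} together with the boundedness of $L^2$-coefficient multiplication operators on the appropriate Sobolev spaces). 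Since $\|u_n\|_{L^2}$ is uniformly bounded and $q_n - q \to 0$, this gives $w_n \to 0$ in $L^2$, i.e. continuity of $\calM^{-1}$ on $\calM(L^2)$.

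The main obstacle I anticipate is \emph{not} the algebra but the functional-analytic setup: the Miura nonlinearity $|u|^2$ lands only in $L^1$, which is exactly the endpoint where $L^1 \not\subset \dot H^{-1}$ in two dimensions (as the authors themselves emphasize), so one cannot simply invert $2\diff$ and iterate in a single homogeneous Sobolev space. The fix is to carry the $\dot H^{-1} + L^1$ decomposition through the entire argument and invoke Theorem~\ref{t:Lq-s} on each piece at the appropriate regularity $s$, exploiting that the operator $L_u^{-1}$ is uniformly bounded on a whole interval of $\dot H^s$ spaces for $0 < s < 1$ as a function of $\|u\|_{L^2}$ only. A secondary technical point is the interaction with the reality constraint \eqref{u-constraint.bis}: because $\diff u$ is real, the system genuinely couples the $\diff$ and $\dbar$ equations, and one must verify that the constraint is preserved under the linearization so that the scalar $\dbar$-inversion theorems can indeed be applied componentwise. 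I would expect roughly a page of careful Littlewood–Paley splitting to make the uniform bounds rigorous, but no new ideas beyond those already assembled in Sections~\ref{sec:gns} and earlier.
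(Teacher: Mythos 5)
Your strategy---linearize the Miura map and invert the resulting first-order operator---is not the paper's approach, and as written it has a genuine gap. Subtracting the two equations gives
\[
2\,\diff w = -\bigl(\bar u_1 w + u_2 \bar w\bigr), \qquad w = u_1 - u_2,
\]
which contains \emph{both} a linear term $\bar u_1 w$ and an antilinear term $u_2\bar w$. This is not an equation of the form $L_a w = 0$: Theorems~\ref{t:Lq} and \ref{t:Lq-s} concern only the operator $\dbar w + u\bar w$ with the purely antilinear coupling, and their proofs (via the $\dbar$/Beurling structure) do not transfer to the mixed operator. Worse, even if they did, those theorems give invertibility $\dot H^{s-1}\to\dot H^{s}$ only for $0<s<1$; your kernel element $w$ lives in $L^2=\dot H^0$ and the source $|u_1|^2-|u_2|^2$ lives in $L^1$, i.e.\ exactly at the excluded endpoints. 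The obstacle you flag yourself---that $L^1\not\subset\dot H^{-1}$ in two dimensions---is not a bookkeeping issue to be absorbed by a Littlewood--Paley splitting of the right-hand side: it is the reason no bounded inverse of the linearized operator from $\dot H^{-1}+L^1$ to $L^2$ is available, and your part (b) assumes precisely such a uniform inverse. Indeed the paper remarks that the inverse Miura map is \emph{not} expected to be Lipschitz or even uniformly continuous, which is incompatible with the uniformly bounded linear solution operator your argument requires.

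The paper's proof avoids linearization altogether. It uses the factorization $H_q = L_1^*L_1 + L_2^*L_2$ with $L_j = \partial_j - \partial_j\phi$, $\phi = 2\dbar^{-1}u$, so that $\calM(u_1)=\calM(u_2)$ forces the two quadratic forms to coincide: $\|L_{u_1}v\|_{L^2}^2 = \|L_{u_2}v\|_{L^2}^2$ for all $v\in\dot H^1\cap L^\infty$. After conjugating by $e^{\psi}$ for a smooth approximation $\psi$ of $\psi^1=\dbar^{-1}u^1$, one tests with a sequence $v_n$ that is uniformly bounded, converges to $1$ a.e., and converges to $0$ in $\dot H^1$; in the limit this yields $\|\nabla(\psi^2-\psi^1)\|_{L^2}\lesssim\epsilon$ for every $\epsilon$, hence injectivity, and a diagonal version of the same computation gives continuity of the inverse. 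If you want to salvage your route, you would need an independent uniqueness theorem for the mixed operator $\diff + \bar u_1\cdot{} + u_2\,\overline{(\cdot)}$ on $L^2$ with $L^2$ coefficients, which is not supplied by anything in the paper; otherwise I recommend switching to the quadratic-form argument.
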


\

\begin{proof}

We begin with some general considerations. Let $u \in L^2$  satisfying \eqref{u-constraint.bis},
and $q = \calM(u)$.
We consider the real valued function 
\[
\phi = 2 \dbar^{-1} u \in \dot H^1
\]
so that $\partial_1 \phi = \Re u$ and
$\partial_2 \phi = \Im u$. Defining the linear operators
\[
 L_j := \partial_j - \partial_j \phi,
\]
it is then easy to see that the operator $H_q$ can be represented as
\begin{equation}\label{Hq-rep}
H_q = L_1^* L_1 + L_2^* L_2,    
\end{equation}
and the corresponding quadratic form
can be expressed as
\begin{equation}\label{Hq-rep-qf}
\langle H_q v, v\rangle= \|L_1 v\|_{L^2}^2 + \|L_2 v\|_{L^2}^2.    
\end{equation}
We remark that, with $L_u$ defined in \eqref{Lu}, we have 
\[
L_u = L_1 + i L_2.
\]
With this set-up in place, we turn our attention to the proof of the theorem.

\bigskip

a) Consider $u^1, u^2 \in L^2$ with $\partial u^1, \partial u^2$
real so that $\calM(u_1) = \calM(u_2) = q$. We want to show that 
$u_1= u_2$.

We start from the two different representations of $H_q$,
which at the level of bilinear forms shows that 
\[
\| L_{u_1} v \|_{L^2}^2 = \| L_{u_2} v\|_{L^2}^2
\]
for every real valued test function $v$, or, by density, for 
every $v \in \dot H^1 \cap L^\infty$. Writing 
\[
u^1 = \dbar \psi^1, \qquad u^2 = \dbar \psi^2, \qquad \psi^1,\psi^2 \in \dot H^1,
\]
the above identity becomes 
\[
\| (\partial_1  - \partial_1 \psi^1) v \|_{L^2}^2
+ \| (\partial_2  - \partial_2 \psi^1) v \|_{L^2}^2 = 
\| (\partial_1  - \partial_1 \psi^2) v \|_{L^2}^2
+ \| (\partial_2  - \partial_2 \psi^2) v \|_{L^2}^2
\]
If we knew say that $\psi_1$ is a nice (e.g. Schwartz) function then  this could be simplified by conjugating $w = e^{\psi_1} v$. This reduces the problem to the case when $\psi^1= 0$ with $\psi^2$  replaced by $ \psi^2-\psi^1$. So suppose that 
\[
\| \partial_1   v \|_{L^2}^2
+ \| \partial_2  v \|_{L^2}^2 = 
\| (\partial_1  - \partial_1 (\psi^2-\psi^1)) v \|_{L^2}^2
+ \| (\partial_2  - \partial_2 (\psi^2-\psi^1) v \|_{L^2}^2
\]
Then it suffices to consider a sequence $v_n \in \dot H^1$
so that 

(i) $v_n$ is uniformly bounded

(ii) $v_n \to 0$ in $\dot H^1$

(iii) $v_n \to 1$ a.e.

Such a sequence is easily seen to exist. Then passing to the limit in the previous identity we get
\[
0 = \| \partial_1 (\psi^2-\psi^1)\|_{L^2}^2+ \| \partial_2 (\psi^2-\psi^1)\|_{L^2}^2
\]
from which we conclude that $\psi^2 -\psi^1$ is constant therefore $u^1= u^2$.

\medskip 

What if $\psi^1$ is not a nice function ? Then we can still emulate the above argument. For any nice function $\psi$ we conjugate  $w = e^{\psi} v$
to arrive at 
\begin{multline*}
\| (\partial_1  - \partial_1 (\psi^1-\psi) v \|_{L^2}^2
+ \| (\partial_2  - \partial_2 (\psi^1-\psi)) v \|_{L^2}^2\\  = 
\| (\partial_1  - \partial_1 (\psi^2-\psi)) v \|_{L^2}^2
+ \| (\partial_2  - \partial_2 (\psi^2-\psi)) v \|_{L^2}^2
\end{multline*}
Given any $\epsilon > 0$, we can choose $\psi$ so that 
\begin{equation}
\| \psi^1 - \psi\|_{\dot H^1} \leq \epsilon. 
\end{equation}
Then we have
\[
\| (\partial_1  - \partial_1 (\psi^2-\psi)) v \|_{L^2}^2
+ \| (\partial_2  - \partial_2 (\psi^2-\psi)) v \|_{L^2}^2
\lesssim \| v\|_{\dot H^1}^2 + \epsilon \| v\|_{L^\infty}
\]
Using the same sequence $v_n$ as above, in the limit it follows 
that 
\begin{equation}
  \| \psi^2 - \psi\|_{\dot H^1} \lesssim \epsilon.   
\end{equation}
We conclude that 
\begin{equation}
  \| \psi^2 - \psi^1\|_{\dot H^1} \lesssim \epsilon.   
\end{equation}
But $\epsilon$ was arbitrary, so $\psi_1 = \psi_2$.

b) We use the same idea as in part (a). Suppose we have 
a sequence $u^n$ and $u$ and corresponding $q^n$ and $q$  (and also $\psi^n$ and $\psi$)
so that 
\begin{equation}
  q_n \to q \qquad \text{in }\dot H^{-1}+L^1.  
\end{equation}
Then for each $v \in \dot H^1\cap L^\infty$ we have 
\[
\lim_{n \to \infty} \| (\partial_1  - \partial_1 \psi^n) v \|_{L^2}^2
+ \| (\partial_2  - \partial_2 \psi^n) v \|_{L^2}^2 = 
\| (\partial_1  - \partial_1 \psi) v \|_{L^2}^2
+ \| (\partial_2  - \partial_2 \psi) v \|_{L^2}^2
\]
uniformly for $v$ in a bounded set.

If $\psi$ is a nice function then we conjugate by $e^{\psi}$
to arrive at 
\[
\lim_{n \to \infty} \| (\partial_1  - \partial_1 (\psi^n-\psi)) v \|_{L^2}^2
+ \| (\partial_2  - \partial_2 (\psi^n-\psi)) v \|_{L^2}^2 = 
\| \partial_1 v \|_{L^2}^2
+ \| \partial_2  v \|_{L^2}^2
\]
still uniformly for $v$ in a bounded set. Now we use the same sequence $v_m$ as above. Given $n$, we can choose $m_n > n$
so that 
\[
\| (\partial(\psi^n-\psi) v_{m_n}\|_{L^2} \lesssim \frac{1}{n}
\]
Then we set $v = v_{m_n}$ above and let $n \to \infty$ (which we can do because of the uniformity in $v$)  to obtain
\[
\lim_{n \to \infty}  \|\partial (\psi^n - \psi)\|_{L^2} = 0
\]
as needed.

Suppose now that $\psi$ is not a nice function. Then for each $\epsilon > 0$ we find $\psi_\epsilon$ which is nice 
and so that 
\[
\| \psi^\epsilon - \psi\|_{\dot H^1} \leq \epsilon. 
\]
Then we conjugate by $\psi_\epsilon$ to obtain 
\begin{multline*}
\lim_{n \to \infty} \| (\partial_1  - \partial_1 (\psi^n-\psi_\epsilon)) v \|_{L^2}^2
+ \| (\partial_2  - \partial_2 (\psi^n-\psi_\epsilon)) v \|_{L^2}^2 = \\ 
\| (\partial_1  - \partial_1 (\psi-\psi_\epsilon)) v \|_{L^2}^2
+ \| (\partial_2  - \partial_2 (\psi-\psi_\epsilon)) v \|_{L^2}^2
\end{multline*}
This is still uniformly in $v$ for fixed $\epsilon$, but not uniformly in $\epsilon$.

Inserting here $v= v_{n_{m}}$ as above (which also depends on $\epsilon$ but this is not important) we arrive at 
\[
\lim_{n \to \infty} \|\partial( \psi^n - \psi^\epsilon) \|_{L^2}
\lesssim \epsilon
\]
which implies 
\[
\limsup_{n \to \infty} \|\partial( \psi^n - \psi^\epsilon) \|_{L^2}
\lesssim \epsilon
\]
But $\epsilon$
is arbitrarily small, so we conclude that 
\[
\psi^n \to \psi \qquad \text{in }\dot H^{1},
\]
as needed.

{\bf Remark:} Apriori one expects $\|\psi_\epsilon\|_{L^\infty}$
to grow almost exponentially in $\epsilon$, so this does not imply Lipschitz (or even uniform) continuity of the inverse of the Miura map. This is as expected.
\end{proof}

\subsection{ The range of the Miura map and the Agmon--Allegretto--Piepenbrink (AAP) principle}\label{sec:spectral}

While the results in the previous subsection would suffice in order to complete a well-posedness theory for NV
in the range of the Miura map, a more complete result should include a characterization of this range.
The main goal of this subsection will be to address this question, and to connect the range of the Miura map with the 
spectral properties of the  Schr\"odinger operator
\begin{equation} \label{Lax}
H_q=-\Delta + q
\end{equation}
as an unbounded self-adjoint operator in $L^2(\R^2)$. This is expected since $H_q$ represents the Lax operator associated to the integrable NV flow. 
To get the discussion started,  is not difficult to see that for such potentials, the associated Schr\"odinger operator is nonnegative.
\begin{proposition}\label{p:positive}
Let $u \in L^2$ satisfying the constraint \eqref{u-constraint.bis},
and $q = \calM(u)$. Then $H_q$ is nonnegative.
\end{proposition}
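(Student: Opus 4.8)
The plan is to read the statement off the factorization $\eqref{Hq-rep}$ of $H_q$, which is exactly the structure that makes nonnegativity manifest, so only a short computation is needed. First I would recall the objects already in place in the proof of Theorem~\ref{T-Minverse}: since $u \in L^2$ obeys the constraint $\eqref{u-constraint.bis}$, the function $\phi = 2\dbar^{-1}u$ is real valued and lies in $\dot H^1(\R^2)$, with $\nabla\phi = (\partial_1\phi,\partial_2\phi) = (\Re u,\Im u)$. Using this, one computes $2\partial u = (\partial_1\Re u + \partial_2\Im u) + i(\partial_1\Im u - \partial_2\Re u) = \Delta\phi$, the imaginary part dropping out because mixed partials of $\phi$ commute; hence, from $\calM(u) = 2\partial u + |u|^2$ in $\eqref{mNV.Miura}$, we obtain the distributional identity $q = \Delta\phi + |\nabla\phi|^2$, with $\Delta\phi \in \dot H^{-1}$ and $|\nabla\phi|^2 \in L^1$. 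Everything is thereby reduced to showing $-\Delta + \Delta\phi + |\nabla\phi|^2 \geq 0$.

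The one genuine step is a completion of the square at the level of the quadratic form. For a test function $v \in C_0^\infty(\R^2)$ I would write
\[
\langle H_q v, v\rangle = \|\nabla v\|_{L^2}^2 + \langle \Delta\phi, |v|^2\rangle + \int_{\R^2} |\nabla\phi|^2 |v|^2 \, dx ,
\]
noting that, since $|v|^2 \in C_0^\infty \subset \dot H^1 \cap L^\infty$ and $\phi \in \dot H^1$, the middle term is an honest $\dot H^{-1}$--$\dot H^1$ pairing and integrates by parts to $-\int \nabla\phi\cdot\nabla(|v|^2)$. Expanding $\nabla(|v|^2) = \bar v\nabla v + v\nabla\bar v$ and regrouping, the right-hand side becomes exactly
\[
\langle H_q v, v\rangle = \int_{\R^2} |\nabla v - v\nabla\phi|^2 \, dx \geq 0 .
\]
Since $\nabla v - v\nabla\phi = (L_1 v, L_2 v)$ with $L_j = \partial_j - \partial_j\phi$ as in $\eqref{Hq-rep}$, this is just $\|L_1 v\|_{L^2}^2 + \|L_2 v\|_{L^2}^2$, i.e. the identity $\eqref{Hq-rep-qf}$. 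As $q$ lies only in $\dot H^{-1} + L^1$, the operator $H_q$ is to be read through this form, taken initially on $C_0^\infty(\R^2)$, so the displayed inequality is precisely the assertion $H_q \geq 0$.

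I do not expect a serious obstacle: the care needed is in the borderline two-dimensional bookkeeping rather than in the argument. I would check that $\langle q, |v|^2\rangle$ is well defined and finite for $v \in C_0^\infty$ — it is, since $\langle \Delta\phi, |v|^2\rangle$ is an $\dot H^{-1}$--$\dot H^1$ pairing and $\int |\nabla\phi|^2|v|^2 \le \|v\|_{L^\infty}^2\|\nabla\phi\|_{L^2(\operatorname{supp}v)}^2 < \infty$ — that its value is independent of the chosen splitting $q = q_0 + q_1 \in \dot H^{-1} + L^1$, and that the resulting nonnegative form is closable and defines the intended self-adjoint realization of $H_q$ (the ``suitable interpretation'' referred to in the introduction). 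I would also add the remark that, at least formally, $\psi = e^{\phi}$ is a positive solution of $H_q\psi = 0$ — the ground state of Agmon--Allegretto--Piepenbrink type behind the identity $q = \Delta\phi + |\nabla\phi|^2$ — and that the form computation above is the rigorous substitute, using only $\phi \in \dot H^1 \subset \mathrm{BMO}$ and never needing to control $e^{\phi}$ pointwise; this is exactly the circle of ideas sharpened in Theorem~\ref{t:AAP}.
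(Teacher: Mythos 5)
Your argument is correct and is essentially the paper's own proof: the paper establishes the factorization \eqref{Hq-rep}, $H_q = L_1^*L_1 + L_2^*L_2$ with $L_j = \partial_j - \partial_j\phi$ and $\phi = 2\dbar^{-1}u$ real valued (real by the constraint \eqref{u-constraint.bis}), and Proposition~\ref{p:positive} is read off from the resulting quadratic-form identity \eqref{Hq-rep-qf}. You merely spell out in more detail the completion of the square and the $\dot H^{-1}$--$\dot H^1$ pairing that the paper declares ``easy to see''.
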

\begin{proof}
If $q = \calM(u)$ then $H_q$ admits the decomposition 
\eqref{Hq-rep},
which immediately shows that the range of $\calM$ only contains potentials $q$ for which $H_q$ is nonnegative.
\end{proof}

The question is then whether the converse is true, which is a much more 
difficult question. This is where the Agmon--Allegretto--Piepenbrink (AAP) principle comes in, as it establishes an equivalence
between the spectral nonnegativity of the Schrödinger operator $H^q$ and the existence of a positive solution (or supersolution) of the associated elliptic equation. In a nutshell, the AAP principle states:
\medskip

\textbf{ The AAP principle in $\mathbb{R}^2$:}
\emph{The following are equivalent for real valued potentials $q$:
\begin{enumerate}
  \item The quadratic form associated with $H_q$ is nonnegative, i.e.,
  \[
    \int_{\mathbb{R}^2} \big( |\nabla v|^2 + q |v|^2 \big) \, dx \geq 0,
    \qquad \forall v \in C_c^\infty(\mathbb{R}^2).
  \]
  \item There exists a nontrivial positive weak solution $\psi>0$ of
  \[
    -\Delta \psi + q \psi = 0 \quad \text{in } \mathbb{R}^2.
  \]
\end{enumerate}}
\medskip
In dimension two, such a  characterization is particularly sharp because $-\Delta$
is critical with respect to Hardy-type inequalities. Such potentials are sometime called of \emph{conductivity type} due to the representation 
\[
q = \frac{\Delta \psi}{\psi},
\]
see, for example, the discussion in the introduction to \cite{MP2018}, and also the results below.

To formally close the circle and return to the Miura map, we remark that, for $\psi$ as above, the 
function $\phi = \log \psi$ solves
\begin{equation}\label{eq:phi}
\Delta \phi + |\nabla \phi|^2 = q
\end{equation}
Reinterpreting this  as an equation for $u = 2 \partial \phi$, we get nothing but
\[
\calM(u) = q,
\]
which heuristically justifies the converse to Proposition~\ref{p:positive}.

Whether the AAP principle is a theorem depends on  what is assumed about $q$.
This includes the case where $q \in L^p_u(\R^2)$ (the uniformly locally $L^p$ functions) for $p>1$ or more generally when $q$ is in the Kato class $K^2$. 
The AAP principle is closely related to work by Persson \cite{Persson1960} on the essential spectrum of elliptic operators and Agmon \cite{Agmon1979} on the exponential decay of solutions of elliptic equations $(P-\lambda)u=0$ for $\lambda$ below the essential spectrum.
 For the work of Allegretto and Piepenbrink in the 1970's and 1980's see Allegretto \cite{Allegretto74,Allegretto1979,Allegretto1981}, Piepenbrink \cite{Piepenbrink1974,Piepenbrink1977},  Allegretto-Piepenbrink, \cite{AllegrettoPiepenbrink1979}, and Moss-Piepenbrink \cite{MP1978}.
In \cite[\S C.8]{SimonSchrodinger}, Simon proved a version of the AAP principle for Schr\"{o}dinger operators in $L^2(\R^\nu)$ with potentials $q=q_+ +q_-$ for $q_+ \in K^\nu_{loc}$ and $q_- \in K^\nu$, where  $K^\nu_{loc}$ is the local Kato class and $q_+$ is the positive part of $q$ (see \cite[\S A2]{SimonSchrodinger} for a full discussion of $K^\nu$ spaces). For more recent work on the AAP principle, see for example \cite{BOP22} and references therein.

In all cases the regularity of $\psi$ is the object of Caccioppoli type inequalities, which have the form 
\begin{equation}
\int \chi^2 \frac{|\nabla \psi|^2}{\psi^2}\, dx \lesssim   \int |\nabla \chi|^2 \, dx + \int \chi^2 q \, dx  
\end{equation}
for any test function $\chi$,
 which in dimension two lead to the global bound 
\[
 \int \frac{|\nabla \psi|^2}{\psi^2} \leq \int q \, dx.
 \]
 More can be said if $q$ is  slightly better than $L^1_{loc}$. The case when $q \in L^p_{\mathrm{loc}}(\R^2)$ for some $p>1$, for instance, was considered in 
 \cite{Murata1986}.

Theorem 2.1.2 of \cite{CKFS1987} implies that, if $q \in L^p_{\mathrm{loc}}(\R^2)$ for some $p>1$, then $-\Delta +q \geq 0$ if and only if there is a positive distributional solution to $(-\Delta+q)\psi=0$. If $q$ has sufficient decay, $(-\Delta+q)\psi=0$ admits a positive distributional solution in the critical case, and a positive solution of logarithmic growth in the subcritical case (see \cite{CKFS1987}, Theorem 5.6). 
Let
$$ L^p_\rho(\R^2) = \{ f \in L^p(\R^2): (1+|x|^2)^{\rho/2} f(x) \in L^p(\R^2) \}. $$In 
\cite{MP2018} it is shown that, for $1<p<2$ and $\rho > 1 + 2/\tilde{p}$, the subcritical potentials are an open subset of $L^p_\rho(\R^2)$ and the critical potentials form the boundary of this set. In other words, in both cases $0$ is the bottom of the spectrum for $H_q$, but in the subcritical case $0$ is a regular point, while in the critical case $0$ is a resonance. Unfortunately, this line of inquiry does not go far enough such as to allow for potentials
$q \in \dot H^{-1} + L^1$.

 Another viewpoint, adopted for instance in \cite{JMV}, is to attempt to characterize all distributions $q$ so that $H_q$ is nonnegative. Restricted to dimension two, one of the results in  \cite{JMV} asserts that, if $H_q$
 is nonnegative then there exists $\Gamma \in L^2_{loc}$ so that 
 \[
q \geq \nabla \cdot \Gamma + \Gamma^2 
 \]
 This comes the closest to our requirements, but having inequality rather than equality above remains a major obstruction. This is necessary 
 e.g. for arbitrary measures $q$, 
 but we aim to remove it when $q \in \dot H^{-1} + L^1$.  

We now arrive at our main result, which provides a definitive answer to the questions above for real potentials $q \in \dot H^{-1} + L^1$.

\begin{theorem}\label{t:AAP}
The following are equivalent for  real potentials $q \in \dot H^{-1} + L^1$:

\begin{enumerate}[label=(\roman*)]
\item The operator $H_q$ is nonnegative. 

\item The equation $H_q \psi = 0$ admits a nonnegative solution $\psi$
with $\ln \psi \in \dot H^1$.

\item The potential $q$ can be represented as 
\[
q = \Delta \phi + |\nabla \phi|^2, \qquad \phi \in \dot H^1.
\]
\end{enumerate}

\end{theorem}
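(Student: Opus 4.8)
We prove the cycle (iii) $\Rightarrow$ (ii) $\Rightarrow$ (i) $\Rightarrow$ (iii); the first two implications are soft, and essentially all the work lies in (i) $\Rightarrow$ (iii), which is the genuine two-dimensional AAP statement in this scale-invariant setting.

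\emph{From (iii) to (ii) to (i).} Given a representation $q = \Delta\phi + |\nabla\phi|^2$ with $\phi\in\dot H^1(\R^2)$, set $\psi := e^\phi$. By the Moser--Trudinger inequality, $e^{c\phi}\in L^p_{\mathrm{loc}}$ for every $c\in\R$ and $p<\infty$; in particular $\psi>0$ a.e., $\psi\in L^p_{\mathrm{loc}}$, and $\nabla\psi = \psi\,\nabla\phi$ is locally in $L^r$ for every $r<2$. A direct computation gives $-\Delta\psi + q\psi = \psi\,(q - \Delta\phi - |\nabla\phi|^2) = 0$ in the weak sense, and since $\ln\psi = \phi\in\dot H^1$ this is (ii). Conversely, starting from (ii) we put $\phi := \ln\psi$, which lies in $\dot H^1$ by assumption (and in particular forces $\psi>0$ a.e.), and running the same computation backwards recovers \eqref{eq:phi}; thus (ii) $\Leftrightarrow$ (iii). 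Finally, (iii) $\Rightarrow$ (i) is the factorization already recorded in \eqref{Hq-rep}: with $L_j = \partial_j - (\partial_j\phi)$, an integration by parts --- legitimate since $\Delta\phi\in\dot H^{-1}$ is paired against $|v|^2\in\dot H^1$ and $|\nabla\phi|^2\in L^1$ against $|v|^2\in L^\infty$ --- yields $\langle H_q v,v\rangle = \|L_1 v\|_{L^2}^2 + \|L_2 v\|_{L^2}^2\ge 0$ for $v\in C_c^\infty$, hence $H_q\ge 0$ in the form sense. (This is Proposition~\ref{p:positive}, now without any $L^2$ constraint on the potential.)

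\emph{From (i) to (iii): set-up.} Assume $H_q\ge 0$. First note that, because $q\in\dot H^{-1}+L^1$, testing the form against the spread-out family $v_\varepsilon(x)=\chi(\varepsilon x)$ --- whose Dirichlet energy $\int|\nabla v_\varepsilon|^2$ is scale invariant in two dimensions, while $\|v_\varepsilon\|_{L^2}^2\to\infty$ and $\langle q,v_\varepsilon^2\rangle$ stays bounded --- forces $\inf\sigma(H_q)=0$. Now exhaust $\R^2$ by balls $B_n$. The Dirichlet form of $H_q$ on $B_n$ is nonnegative, so its generalized principal eigenvalue $\lambda_n\ge 0$ is well defined, nonincreasing, and $\lambda_n\downarrow\inf\sigma(H_q)=0$. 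Let $\psi_n>0$ be the corresponding principal Dirichlet eigenfunction of $H_q$ on $B_n$ (a preliminary mollification of $q$ on the fixed ball $B_n$, which perturbs $\lambda_n$ only by $o(1)$, reduces its construction and positivity to the classical smooth case), and set $\phi_n := \ln\psi_n$, normalized by $\int_{B_1}\phi_n\,dx = 0$. Testing the eigenvalue equation against $\psi_n^{-1}\chi^2$ produces the Caccioppoli inequality
\[
\int\chi^2\,|\nabla\phi_n|^2\,dx \;\lesssim\; \int|\nabla\chi|^2\,dx \;+\; \langle q,\chi^2\rangle \;+\; \lambda_n\!\int\chi^2\,dx,
\]
valid for all cutoffs $\chi$ supported in $B_n$. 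The crucial, sharply two-dimensional point is that one may take $\chi$ to be a logarithmic cutoff --- equal to $1$ on $B_R$, to $0$ outside a much larger ball, interpolating in $\log|x|$, with the outer radius chosen to grow slowly with $n$ so as to absorb $\lambda_n\!\int\chi^2$ --- for which the inner Dirichlet energy $\int|\nabla\chi|^2\,dx\to 0$ while $\langle q,\chi^2\rangle\to\int q\,dx$ (this limit exists precisely because $q\in\dot H^{-1}+L^1$, cf.\ Section~\ref{sec:nv}). Letting $R\to\infty$ then yields the uniform, scale invariant bound $\|\nabla\phi_n\|_{L^2}^2\lesssim 1+\int q\,dx$, so $\{\phi_n\}$ is bounded in $\dot H^1$.

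\emph{From (i) to (iii): passage to the limit, and the main obstacle.} Pass to a subsequence with $\phi_n\rightharpoonup\phi$ in $\dot H^1$; by Rellich and a diagonal argument $\phi_n\to\phi$ a.e.\ and in $L^2_{\mathrm{loc}}$, so $\psi_n=e^{\phi_n}\to\psi:=e^\phi$ a.e., and combining this with the uniform local bounds $\sup_n\|e^{p\phi_n}\|_{L^1(K)}<\infty$ (Moser--Trudinger again) upgrades it to $\psi_n\to\psi$ in $L^p_{\mathrm{loc}}$ for every $p<\infty$. It remains to pass to the limit in the weak form $\int\nabla\psi_n\cdot\nabla\varphi + \langle q,\psi_n\varphi\rangle = \lambda_n\!\int\psi_n\varphi$ against a fixed $\varphi\in C_c^\infty$: the left term equals $\int e^{\phi_n}\nabla\phi_n\cdot\nabla\varphi$ and converges because $\nabla\phi_n\rightharpoonup\nabla\phi$ in $L^2$ while $e^{\phi_n}\to e^\phi$ strongly in every $L^p_{\mathrm{loc}}$, and $\lambda_n\to 0$ disposes of the right-hand side. \textbf{The delicate step} is the potential pairing $\langle q,\psi_n\varphi\rangle$, and, equivalently, the passage to the limit in the nonlinear term $|\nabla\phi_n|^2$ in \eqref{eq:phi}: weak $\dot H^1$ convergence only gives $|\nabla\phi_n|^2\,dx\rightharpoonup|\nabla\phi|^2\,dx + \nu$ for a nonnegative defect measure $\nu$, and one must show $\nu=0$, i.e.\ rule out any concentration of Dirichlet energy. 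I expect to handle this by first conjugating away the $\dot H^{-1}$ component of $q$ --- writing $q=\Delta g + h$ with $g\in\dot H^1$, $h\in L^1$, and working with $e^{-g}\psi_n$, which solves an equation with an $L^1$ potential in the weighted space $L^2(e^{2g}\,dx)$ --- and then running a concentration-compactness dichotomy in which the localized Caccioppoli inequality on small balls, together with the absence of atoms for $q\in\dot H^{-1}+L^1$, prevents any fixed amount of energy from concentrating, forcing $\nu=0$ and $\psi_n\to\psi$ strongly in $H^1_{\mathrm{loc}}$. With the limit justified, $\psi$ solves $H_q\psi=0$ with $\ln\psi=\phi\in\dot H^1$, and unwinding \eqref{eq:phi} gives $q=\Delta\phi+|\nabla\phi|^2$, i.e.\ (iii), closing the cycle.
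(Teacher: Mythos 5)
Your treatment of the soft implications is essentially right, and your overall architecture for (i) $\Rightarrow$ (iii) (exhaustion by balls, positive local solutions, a Caccioppoli inequality with logarithmic cutoffs giving a uniform scale-invariant $\dot H^1$ bound on $\phi_n=\ln\psi_n$, then a limit) parallels the paper's. One smaller point first: in (iii) $\Rightarrow$ (ii) you assert that $-\Delta\psi+q\psi=0$ holds "in the weak sense," but for $q\in\dot H^{-1}+L^1$ and $\psi=e^\phi$ merely in $L^p_{\mathrm{loc}}$ the product $q\psi$ is not a well-defined distribution; the statement of (ii) has to be \emph{interpreted}, e.g.\ as $\Delta\psi/\psi=q$ with the left side defined by continuous extension from $\phi\in\dot H^1\cap L^\infty$, which is how the paper makes (ii) $\Leftrightarrow$ (iii) rigorous. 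Similarly, constructing a positive principal Dirichlet eigenfunction of $H_q$ on $B_n$ by "a preliminary mollification of $q$" does not by itself produce a solution for $q$: removing the mollification for fixed $n$ runs into exactly the same compactness problem you face later, so this step is either circular or must be replaced by a construction that works directly for rough $q$ (the paper does this by first treating nonnegative measures, where a capacity argument shows the form domain is well defined and approximate zero modes can be built by solving $H_q\psi_R=c_R\chi_R$ with source in an annulus).

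The decisive gap, which you yourself flag with "I expect to handle this by\dots," is the passage to the limit in $|\nabla\phi_n|^2$, i.e.\ showing the defect measure $\nu$ vanishes. The mechanism you propose does not obviously work: $\nu$ arises from failure of strong $L^2$ convergence of $\nabla\phi_n$, and such defect measures need not be atomic (Dirichlet energy in $\dot H^1(\R^2)$ can concentrate on sets of any dimension), so "absence of atoms for $q$" does not rule it out; nor is it clear how conjugating away the $\dot H^{-1}$ part of $q$ and a generic concentration-compactness dichotomy would exclude, say, a vanishing-oscillation profile $\phi_n^{hi}$ whose gradient squares converge weakly to a nonzero nonnegative measure. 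The paper closes exactly this loophole by a different and essentially nonperturbative device: if a high-frequency piece $\phi_\epsilon^{hi}$ carries energy to infinity in frequency, then in the distributional limit $\Delta\phi_\epsilon^{hi}\to 0$, so the residual potential $q^{hi}=\lim|\nabla\phi_\epsilon^{hi}|^2$ is a \emph{nonnegative Radon measure} in $\dot H^{-1}+L^1$; part A of the proof then supplies a genuine representation $q^{hi}=\Delta\phi^{hi}+|\nabla\phi^{hi}|^2$, and the \emph{continuity of the inverse Miura map} (Theorem~\ref{T-Minverse}) upgrades the convergence $\calM(\phi^{hi}_\epsilon$-data$)\to q^{hi}$ to strong $\dot H^1$ convergence of $\phi_\epsilon^{hi}$, contradicting the assumed loss of compactness. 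That injectivity/continuity result is an independent ingredient your proposal never invokes, and without it (or a substitute of comparable strength) the key step of (i) $\Rightarrow$ (iii) remains unproved.
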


Our primary interest in this article 
is in the equivalence between (i) and (iii), but we have also added (ii) in order  to relate our result to the AAP principle.

\begin{remark}
    In one dimension, the Miura map $B(r) = r' +r^2$ is a map from $L^2_{loc}(\R)$ to $H^{-1}_{loc}(\R)$. Kappeler, Perry, Shubin, and Topolov \cite{KPST}, analyzed the one-dimensional Miura map and its relation to the Schr\"{o}dinger operator $H_q = -d^2/dx^2 + q$.  They showed that, for a real-valued distribution $q \in H^{-1}_{loc}$,  there is an  equivalence analogous to the one shown in Theorem \ref{t:AAP} (see \cite[Theorem 1.1]{KPST}).
\end{remark}

\begin{remark}
One delicate matter, in the case when $q \in \dot H^{-1} + L^1$, is the interpretation of the equation $H_q \psi = 0$ in (ii). The difficulty is that $\psi$ cannot be expected to be locally in $H^1 \cap L^\infty$, so the multiplication $q \psi$ is ill defined. We bypass this issue by reinterpreting the equation as
\begin{equation}\label{conduct}
\frac{\Delta \psi}{\psi} = q,
\end{equation}
where the left hand side can be properly and uniquely interpreted as
distribution whenever $\psi$ is a.e. positive with $\log \psi \in \dot H^1$.
On the other hand this difficulty dissapears when in addition $q \geq 0$,
in which case we do expect that $\psi$ is locally  in $H^1 \cap L^\infty$, essentially by a maximum principle type argument. This case plays an important role in our proof.
\end{remark}

 Returning to our Miura map, this result directly implies the converse to Proposition~\ref{p:positive}.
 
\begin{corollary}
Let $q \in   \dot H^{-1}+ L^1$. Then $H_q$ is nonnegative iff $q \in \calM(L^2)$.
\end{corollary}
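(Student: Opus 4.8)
The plan is to read the corollary off directly from Proposition~\ref{p:positive} and the equivalence $(\mathrm{i})\Leftrightarrow(\mathrm{iii})$ of Theorem~\ref{t:AAP}, so that no genuinely new work is required beyond one elementary computation identifying a representation $q=\Delta\phi+|\nabla\phi|^2$ with an element of the range of the Miura map.

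One implication is immediate: if $q\in\calM(L^2)$, then $q=\calM(u)$ for some $u\in L^2$ obeying the constraint \eqref{u-constraint.bis}, and Proposition~\ref{p:positive} (equivalently, the factorization \eqref{Hq-rep} of $H_q$) gives $H_q\geq 0$.

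For the converse I would proceed as follows. Assume $H_q\geq 0$. By Theorem~\ref{t:AAP}, part $(\mathrm{i})\Rightarrow(\mathrm{iii})$, there is a real-valued $\phi\in\dot H^1$ with $q=\Delta\phi+|\nabla\phi|^2$. Set $u:=2\dbar\phi$. Then $u\in L^2$ since $\nabla\phi\in L^2$; moreover $\partial u = 2\partial\dbar\phi = \tfrac12\Delta\phi$ is real, so $\partial u=\overline{\partial u}$, i.e.\ the constraint \eqref{u-constraint.bis} holds and $u$ lies in the domain of $\calM$. Finally, using $\partial\dbar=\tfrac14\Delta$ and $4|\dbar\phi|^2=|\nabla\phi|^2$, one computes
\[
\calM(u)=2\partial u+|u|^2=\Delta\phi+|\nabla\phi|^2=q,
\]
which makes precise the heuristic relation indicated around \eqref{eq:phi}. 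Hence $q\in\calM(L^2)$, closing the equivalence.

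The only point requiring care within the corollary itself is to check that the $u$ manufactured from $\phi$ truly belongs to $L^2$ and satisfies the reality constraint \eqref{u-constraint.bis}; this is why the construction is organized through $\dbar\phi$, whose $\partial$-derivative is a real multiple of $\Delta\phi$. All of the substantive difficulty has been pushed upstream into Theorem~\ref{t:AAP}, namely producing $\phi\in\dot H^1$ (equivalently, a positive solution $\psi$ of $\Delta\psi/\psi=q$ with $\log\psi\in\dot H^1$) from the mere nonnegativity of the quadratic form of $H_q$ in the critical, scale-invariant space $\dot H^{-1}+L^1$. Relative to that, the corollary is a formal assembly of results already in hand.
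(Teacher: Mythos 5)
Your proposal is correct and follows exactly the route the paper intends: one direction is Proposition~\ref{p:positive}, the other is the implication (i)~$\Rightarrow$~(iii) of Theorem~\ref{t:AAP} combined with the identification $u=2\dbar\phi$, for which your verification that $u\in L^2$, that $\partial u=\tfrac12\Delta\phi$ is real (so \eqref{u-constraint.bis} holds), and that $\calM(u)=\Delta\phi+|\nabla\phi|^2=q$ is accurate. The paper treats the corollary as immediate from the theorem, so your write-up simply makes explicit the same assembly.
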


It also has another, softer consequence:

\begin{corollary}
   $\calM(L^2)$ 
is closed in $\dot H^{-1} + L^1$. 
\end{corollary}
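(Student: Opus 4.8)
The plan is to reduce the assertion to the fact that spectral nonnegativity of $H_q$ is a closed condition, leaning on the identification of $\calM(L^2)$ supplied by the preceding corollary (whose substance is Theorem~\ref{t:AAP}). That corollary gives
\[
\calM(L^2) = \bigl\{\, q \in \dot H^{-1} + L^1 \ :\ q \text{ real-valued and } H_q \geq 0 \,\bigr\},
\]
where $H_q \geq 0$ is read in the quadratic form sense: $\|\nabla v\|_{L^2}^2 + \langle q, |v|^2\rangle \geq 0$ for all $v \in C_c^\infty(\R^2)$. So it suffices to prove the right-hand side is closed in $\dot H^{-1} + L^1$.

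First I would note that reality passes to limits: convergence in $\dot H^{-1} + L^1$ implies convergence in $\mathcal{D}'(\R^2)$, and the real distributions are a closed subspace. Next, I would fix $v \in C_c^\infty(\R^2)$, set $w = |v|^2 \in \dot H^1 \cap L^\infty$, and observe that for any decomposition $q = g + f$ with $g \in \dot H^{-1}$, $f \in L^1$ one has $|\langle q, w\rangle| \leq \|g\|_{\dot H^{-1}}\|w\|_{\dot H^1} + \|f\|_{L^1}\|w\|_{L^\infty}$; taking the infimum over decompositions shows $q \mapsto \langle q, w\rangle$ is a bounded linear functional on $\dot H^{-1} + L^1$. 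Hence for each such $v$ the set $\mathcal{K}_v := \{\, q \in \dot H^{-1} + L^1 : \|\nabla v\|_{L^2}^2 + \langle q, |v|^2\rangle \geq 0 \,\}$ is closed, being the preimage of $[0,\infty)$ under a continuous affine functional. Since $\calM(L^2)$ is the intersection of $\bigcap_{v \in C_c^\infty(\R^2)} \mathcal{K}_v$ with the closed subspace of real distributions, it is closed.

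There is no genuine obstacle here: the hard analytic work is already packaged in Theorem~\ref{t:AAP}. The only point worth flagging is that this argument deliberately goes through the spectral characterization rather than the representation $q = \Delta\phi + |\nabla\phi|^2$; a direct proof from that representation would need a $\dot H^1$-convergent subsequence of the generating potentials $\phi_n$, i.e.\ a compactness statement for the inverse Miura map, which is precisely what Theorem~\ref{t:AAP} lets us sidestep.
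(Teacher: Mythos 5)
Your argument is correct and is precisely the one the paper intends: the corollary is stated as a ``softer consequence'' of Theorem~\ref{t:AAP} exactly because the spectral characterization reduces closedness to the observation that, for each fixed $v\in C_c^\infty$, the map $q\mapsto \|\nabla v\|_{L^2}^2+\langle q,|v|^2\rangle$ is continuous on $\dot H^{-1}+L^1$, so nonnegativity of the form survives limits. Your verification of that continuity (pairing against $|v|^2\in \dot H^1\cap L^\infty$ and taking the infimum over decompositions) and the remark that reality is also a closed condition fill in the details the paper leaves implicit.
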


\begin{proof}[Proof of Theorem~\ref{t:AAP}]

\textbf{(ii) $\Leftrightarrow$  (iii)}
Denoting $\phi = \log \psi$, here it suffices to show that for $\phi \in \dot H^1$ we have the relation 
\[
\frac{\Delta e^\phi}{e^\phi} = \Delta \phi + |\nabla \phi|^2.
\]
But this is straightforward when $\phi$ is a smooth positive function.
The multiplication on the left hand side is also well-defined when $\phi \in  (H^1 \cap L^\infty)_{loc}$, in which case we have both $e^\phi$ and $e^{-\phi}$ in $H^1_{loc}$, and the above relation extends by density to this setting. Finally for arbitrary 
$\phi \in \dot H^1$ we note that the 
right hand side depends smoothly in $\phi$ in the $\dot H^{-1} + L^1$ topology, and we simply define 
the fraction on the left as the unique 
continuous extension of the right hand side, noting that $\dot H^1 \cap L^\infty$ is dense in $\dot H^1$.

\medskip

\textbf{(iii) $\Rightarrow$  (i)} This is the content of Proposition~\ref{p:positive}.

\medskip

\textbf{(i)  $\Rightarrow$ (iii)}
We consider $q \in \dot H^{-1} + L^1$ so that 
$H_q$ is nonnegative. We want to show that 
we have a representation 
\begin{equation}\label{q-rep}
q = \Delta \phi + |\nabla \phi|^2,
\end{equation}
with $\phi \in \dot H^1$.
The existence of such a $\phi$  in $\dot H^1_{loc}$ is known if $q$ is a nice function (e.g. $L^p_{loc}$) from classical versions of the AAP principle, see references above.

Then a Caccioppoli type inequality gives the desired $\dot H^1$ bound for $\phi$, as follows. Given a test function $\chi$, we test \eqref{q-rep} by $\chi^2$, integrate by parts and apply Cauchy-Schwarz. This yields
\begin{equation}\label{caccioppoli-pf}
 \int \chi^2 |\nabla \phi|^2 dx \lesssim \int |\nabla \chi|^2  dx + \|q\|_{\dot H^{-1} + L^1} \| \chi\|_{\dot H^1 \cap L^\infty}.   
\end{equation}
To get the $\dot H^1$ bound for $\phi$ it suffices to consider a sequence of nonnegative test functions $\chi_R$ which as $R \to \infty$ satisfy the following properties:
\begin{enumerate}[label=(\alph*)]
\item 
\[
0 \leq \chi_R \leq 1, \qquad \chi_R = 0 \quad |x| > R
\]
\item 
\[
\chi_R \to 1 \quad a.e. 
\]
\item 
\[
\lim_{R \to \infty} \|\chi_R\|_{\dot H^1}= 0
\]
\end{enumerate}
Such a sequence is easily seen to exist. Boundedness rather than decay would suffice in (c), but the better choice will be useful later. With this choice, we arrive at
\begin{equation}\label{caccioppoli-pf1}
 \int|\nabla \phi|^2 dx \lesssim  \|q\|_{\dot H^{-1} + L^1}.
\end{equation}

The transition from nice $q$ to $q \in \dot H^{-1} + L^1$
is accomplished in two steps:
\bigskip

{\bf A. $q$ bounded non-negative Radon measure. }
Here (i) automatically holds, and our objective will be to prove 
(iii). We divide the argument into 
several modules:

\bigskip

\emph{ A1: $H_q$ as a coercive operator.}
To begin with, we consider the operator $H_q$ as a nonnegative operator
on test functions, and seek to extend its associated bilinear form 
by density to its domain 
\[
D_q \subset \dot H^1 \cap L^2(q)
\]
The delicate part in this is whether 
$H^1$ functions are even defined $q$- a.e. This is false for some nonnegative measures, e.g. any Dirac mass. This is clarified using the notion of $H^1$ capacity of a set. 

The above mentioned extension is possible  if and only if for every Borel set $E$ we have 
\begin{equation} \label{ext-thm}
\mathrm{Cap}_{H^1}(E) = 0 \implies q(E) = 0,
\end{equation}
see \cite[Chapter 2]{Mazya2011}.
Here the $\dot H^1$ capacity of a compact set is defined, see e.g. \cite{Adams-Hedberg}, as 
\[
\mathrm{Cap}_{H^1}(E)= \inf\{
\| \phi \|_{H^1}\ |\  \phi \in S,
\ \phi \geq 1 \text  { on } E \}   
\]
In particular, for measures $q$ satisfying \eqref{ext-thm}, $\dot H^1$ functions can be uniquely defined as continuous $q$-a.e.
(called the quasicontinuous extension) see \cite[Proposition 6.1.2]{Adams-Hedberg}. One can also show that $D_q$ contains $\dot H^1 \cap L^\infty$, as classical regularizations of such functions will converge pointwise $q$-a.e. to their 
quasicontinuous extension, and thus in $L^2(q)$ by dominated convergence.

While the above property \eqref{ext-thm} does not hold for arbitrary measures,  we will prove that this is indeed the case if $q \in L^1+ \dot H^1$. 
We consider a representation 
\[
q = q_1+q_2, \qquad q_1 \in L^1, \qquad q_2 \in \dot H^{-1}. 
\]
This is a local property, so without loss of generality we assume that  $q$ is in the space $L^1 + H^{-1}$  with both components localized inside the unit ball $B$, and $E \subset B$; this can be easily achieved using appropriate cutoff functions.

Let $E \subset B$ be a Borel set with $\mathrm{Cap}_{H^1}(E)=0$;
in particular it must also have Lebesque measure zero. We use an equivalent characterization of zero capacity sets, which is that for each $\epsilon > 0$ 
there exists a nonnegative function $\chi \in H^1_0(2B)$ with the following properties:

\begin{enumerate}
    \item $\chi\leq 1$ everywhere and $\chi = 1$ in a neighbourhood of $E$
    \item $\chi$ is small in $H^1$, $\| \chi\|_{H^1} \leq \epsilon$.
\end{enumerate}
These properties imply in particular that $\chi$ is pointwise small outside a small set,
\begin{equation}\label{small-set}
m(\{ \chi > \epsilon^\frac12 \}) < \epsilon
\end{equation}

We use $\chi$ to compute 
\[
q(E) \leq \int \chi dq = 
\int \chi q_1 dx + \langle \chi, q_2 \rangle  \lesssim \epsilon + \int_{\{ \chi > \epsilon^\frac12 \}} |q_1|dx
\]
Letting $\epsilon \to 0$ on the right 
and using \eqref{small-set} we obtain $q(E) = 0$ as needed.

\bigskip
\emph{ A2: Approximate zero modes for $H_q$.}
We begin our construction of  a nonnegative zero mode for $H_q$ by successively solving a sequence of elliptic problems
\[
H_q \psi_R = c_R \chi_R
\]
where for each $R$, $\chi_R$ is a nonnegative bump function supported 
in $B_{2R} \setminus B_R$. Such solutions exist in $D_q$ and are unique in by Lax-Milgram, and are also nonnegative in view of the 
standard variational characterization.
We normalize the constant $c_R$ so that 
\begin{equation} \label{psi-unif}
\|\psi_R\|_{L^2(B)} = 1.
\end{equation}
Here without loss of generality we choose the unit ball $B$ so that $q(B) > 0$, which in turn guarantees that we have the Poincare type inequality 
\begin{equation}
\int_B \psi^2 dx \lesssim \int_B |\nabla \psi|^2 dx + \int_B \psi^2 dq 
\end{equation} 
We remark that the functions $\psi_R \in H^1_{loc}$ are subharmonic and nonnegative in $B_R$, so they are also bounded say in $B_{R/2}$.

To summarize, for each $R$
we have obtained nonnegative 
functions $\psi_R \in H^1 \cap L^\infty(B_{R/2})$ which solve 
$H_q \psi_R = 0$ in the same ball,
with the normalization \eqref{psi-unif}. This is all the information we carry forward. 

\bigskip

\emph{A3: Uniform bounds for nonnegative local zero modes.}
In order to obtain a global zero mode $\psi$ for $H_q$ as the local limit 
of $\psi_R$ on a subsequence, we need 
bounds for $\psi_R$ which are uniform in $R$. For clarity we drop the $R$ subscript and simply assume that we have a nonnegative solution $\psi$ to $H_q \psi = 0$ in a ball $B_R$, with the normalization \eqref{psi-unif}.
 We do this using again Caccioppoli 
type inequalities.

Ideally we would like to test the equation with a function of the form 
$\dfrac{\chi^2}{\psi}$ where the test function $\chi$ is for now arbitrary. But we do not know that $\psi$ stays away from zero,  so we penalize this 
and replace it by $\dfrac{\chi^2}{\psi+\epsilon}$ with a small parameter $\epsilon$. This belongs to $H^1 \cap L^\infty \subset D_q$. We obtain
\begin{equation}\label{caccipoli}
\begin{aligned}
0 = & \ \int \nabla \psi \cdot \nabla  \frac{\chi^2}{\psi+\epsilon}\, dx + 
\int \frac{ \psi \chi^2}{\psi+\epsilon}dq 
 \\ 
 =&\ - \int   \frac{|\nabla \psi|^2\chi^2}{(\psi+\epsilon)^2}\, dx +
\int 2\chi \frac{\nabla \psi \cdot \nabla \chi}{\psi+\epsilon}  
+ \int \frac{ \psi \chi^2}{\psi+\epsilon}dq 
\end{aligned}
\end{equation}
By Cauchy-Schwartz this yields
\[
\int   \frac{|\nabla \psi|^2\chi^2}{(\psi+\epsilon)^2}\, dx \lesssim 
\int |\nabla \chi|^2 \, dx + \int \chi^2\,  dq
\]
Choosing  $\chi$ to be a bump function
adapted to $B_r$, the right hand side stays bounded and we arrive at 
\[
\| \log (\psi+\epsilon)\|_{\dot H^1(B_r)} \lesssim 1
\]
The normalization \eqref{psi-unif}
allows us to bound also the $L^2$ norm, 
first in $B$,
\[
\| \log (\psi+\epsilon)\|_{L^2(B)} \lesssim 1
\]
and then by Hardy in $B_r$, 
\[
\| \log (\psi+\epsilon)\|_{L^2(B_r)} \lesssim r
\]
This suffices in order to pass to the limit $\epsilon \to 0$ weakly in $H^1$
and strongly in $L^2$ sense and obtain 
$\log \psi \in H^1(B_r)$, with 
the  universal bound  
\begin{equation}\label{caccipoli+}
\| \log \psi\|_{\dot H^1(B_r)} \lesssim 1.
\end{equation}
and in particular $\psi > 0$ a.e.
A more subtle point here is that, since $q$ vanishes on capacity zero sets,
we also have $\psi > 0$ $q$-a.e.

This in turn implies that we have 
strong convergence, 
\[
\log (\psi+\epsilon) \to \log \psi \qquad \text{in}  \quad H^1(B_r).
\]
From here, by Trudinger-Moser type inequalities it follows that 
\begin{equation}\label{estr1}
 \| \psi\|_{L^2(B_r)} \lesssim_r 1
\end{equation}
Then, by localized energy estimates 
we get 
\begin{equation}\label{estr2}
 \| \nabla \psi\|_{L^2(B_{r/2})} \lesssim_r 1
\end{equation}
Finally, since $\psi$ is nonnegative and subharmonic in $B_r$, it also follows that we have a local uniform bound
\begin{equation}\label{estr3}
\| \psi\|_{L^\infty(B_{r/2})} \lesssim_r 1.
\end{equation}

\bigskip

\emph{A4: The nonnegative zero mode $\psi$ for $H_q$.}
 We seek to obtain $\psi$ as the local limit, on a subsequence, of the functions $\psi_R$ as $R\to \infty$.
 The estimates in the previous step show that the bounds \eqref{estr1}, \eqref{estr2} and \eqref{estr3} hold uniformly for $\psi_R$ if $r \ll R$.

 Then we consider the localized functions $\chi_{<r} \psi_R$ which solve 
\[
H_q (\chi_{<r} \psi_R) = -2 \nabla \chi_{<r} \nabla \psi_R - \Delta \chi_{<r} \psi_R. 
\]
As $R \to \infty$, the right hand sides are uniformly bounded in $L^2$
so they converge on a subsequence in $H^{-1}$. We conclude that the functions $\chi_{<r} \psi_R$ converge on a subsequence in $D_q$. Applying this for a sequence of scales $r_n \to \infty$, we obtain in the limit 
a nonnegative function $(\psi \in H^1 \cap L^\infty)_{loc} \subset D_{q,loc}$ which solves $H_q \psi = 0$
and $\|\psi\|_{L^2(B)} = 1$.

The earlier energy estimates in \eqref{caccipoli+} apply to our zero mode for every $r$, so we obtain the global bound 
\begin{equation}
\| \nabla \log \psi\|_{L^2} \lesssim 1. \end{equation}

In particular the identity 
in \eqref{caccipoli} can be applied with a sequence of cutoffs $\chi_R$ as discussed earlier, which converges to $1$ pointwise and to $0$ in $\dot H^1$.
In the limit as $R \to \infty$ we obtain the identity 
\[
\int |\nabla \log (\psi+\epsilon)|^2 \, dx = \int \frac{\psi}{\psi+\epsilon}\, dq
\]
which by dominated convergence yields, as $\epsilon \to 0$,
\begin{equation}
\int |\nabla \log (\psi)|^2 \, dx = \int 1 \, dq.
\end{equation}
We note that in particular this is part of property  (ii).

\bigskip

\emph{The proof of (iii).}
Our objective here is to show that the 
function $\phi = \log \psi$ solves \eqref{eq:phi}. This is obvious formally, but the direct computation
is not rigorous at this low regularity 
since $\psi$ is not guaranteed to stay away from zero locally. So instead we compute an equation for $\log(\psi+\epsilon)$, which is locally bounded and in $H^1$. Then we have
\[
\Delta \log (\psi+\epsilon)
= \frac{\Delta \psi}{\psi+\epsilon} 
-\frac{|\nabla \psi|^2}{(\psi+\epsilon)^2} = \frac{\psi}{\psi+\epsilon} q - |\nabla \log(\psi+\epsilon)|^2
\]
qr equivalently
\[
\Delta \log (\psi+\epsilon) + 
|\nabla \log(\psi+\epsilon)|^2 = \frac{\psi}{\psi+\epsilon} q 
\]
As proved earlier, $\log (\psi+\epsilon)
$ converges to $\log \psi$ locally in $H^1$, so we can pass to the limit on the left in the sense of distributions.

On the other hand on the right, we can use dominated convergence, taking advantage ofthe fact, established earlier, that $\psi > 0$ $q$-a.e.
Hence \eqref{eq:phi} follows, which concludes the proof of (iii) for nonnegative $q$.

\bigskip 

{\bf B. General $q$ in $ \dot H^{-1} + L^1$.}
For $q$ which is merely in $\dot H^{-1} + L^1$ we consider its regularizations $q_\epsilon$ on the $\epsilon$ scale. By averaging the operators $H_{q_{\epsilon}}$ are still nonnegative  
(see e.g. Step 1 of the proof  of \cite[Theorem 5.1]{JMV}).
The mollified functions $q_\epsilon$ are also smooth, and thus 
admit smooth representations as in \eqref{q-rep},
\[
q_\epsilon = \Delta \phi_\epsilon + |\nabla \phi_\epsilon|^2.
\]
By the Cacciopolli inequality above, the functions $\phi_\epsilon$  are uniformly bounded in $\dot H^1$. Using this property and testing again with $\chi_R^2$
we obtain, as $R \to \infty$, the identity
\[
\| \phi_\epsilon \|_{\dot H^{1}}^2 
= \int q_\epsilon
\lesssim 
\|q_\epsilon \|_{\dot H^{-1} + L^1}
\]
At this point, the argument in \cite{JMV}
continues by taking a weak limit on a subsequence (after normalizing constants). 
Instead, we will show that on a subsequence we have a strong limit
\[
\psi = \lim \psi_\epsilon \qquad in \qquad \dot H^1
\]
If that holds then the proof is concluded. For a direct application of the Riesz-Kolmogorov theorem, one needs two properties:
\begin{enumerate}[label=(\Alph*)]
\item Tightness at infinity,
\begin{equation}
\lim_{R \to \infty} \sup \| \nabla \phi_\epsilon \|_{L^2(|x|> R)} = 0     
\end{equation}
\item Equicontinuity,
\begin{equation}
\lim_{\lambda  \to \infty} \sup \| P_{> \lambda} \nabla \phi_\epsilon \|_{L^2} = 0     
\end{equation}
\end{enumerate}

We can establish part A by using  test functions $\chi_R$ as above. We then have 
\[
\int (1-\chi_R) |\nabla \psi_\epsilon|^2 dx
= - \int \nabla \chi_r \cdot \nabla \psi_\epsilon
+ \int (1-\chi_R) q_\epsilon 
\]
where the right hand side must go to zero as $R \to \infty$, uniformly in $\epsilon$.

However part B is more delicate. We argue by contradiction. 
If equicontinuity does not hold, then on a subsequence we must have a low-high decomposition,
\[
\psi_{\epsilon} = \psi_\epsilon^{lo} + \psi_\epsilon^{hi} 
\]
so that the first component converges in $\dot H^1$
\[
\lim_{\epsilon \to 0} \psi_\epsilon^{lo} = \phi^{lo}
\]
while the second component runs to infinity in frequency, with
\begin{equation}\label{to-infty0}
\lim_{\epsilon \to 0} \| \phi_\epsilon^{hi}\|_{\dot H^1} > 0, 
\end{equation}
while
\begin{equation}\label{to-infty}
\lim_{\epsilon \to 0} P_{< \lambda} \phi_\epsilon^{hi} = 0, \qquad \lambda > 0
\end{equation}
Then it is easily verified that the interaction term decays,
\[
\| \nabla \phi_\epsilon^{lo} \nabla \phi_\epsilon^{hi}\|_{L^1+\dot H^{-1}} = 0
\]
so we are left with 
\[
\lim_{\epsilon \to 0} \Delta \phi_\epsilon^{hi} + |\nabla \phi_\epsilon^{hi}|^2 = q^{hi} : = q - (\Delta \phi^{lo} + |\nabla \phi^{lo}|^2) 
\]
Here $\Delta \phi_\epsilon^{hi}$ converges to $0$ in the sense of distributions, so we conclude that in the  sense of distributions we have
$q^{hi} =  \lim_{\epsilon \to 0} |\nabla \phi_\epsilon^{hi}|^2 \geq 0$. In other words, $q^{hi} \in L^1+\dot H^1$ is a nonnegative Radon measure.

But from  part A of the proof we know that for such $q^{hi}$ we do have a representation 
\[
q^{hi} = \Delta \phi^{hi} + |\nabla \phi^{hi}|^2
\]
By the continuity of the inverse of the Miura map in Theorem~\ref{T-Minverse},
it then follows that $\phi_\epsilon^{hi} \to \phi^{hi}$
in $\dot H^1$. This contradicts \eqref{to-infty0}-\eqref{to-infty}, and completes the proof of this step.
\end{proof}

\subsection{\texorpdfstring{$\dot H^{-1} + L^1$ solutions for NV}{H-1 + L1 solutions for NV}}
Finally, we turn our attention to Theorem~\ref{thm:NV}.

\begin{proof}[Proof of Theorem~\ref{thm:NV}]
For any NV initial data $q_0 \in \calM(L^2)$ we define the associated data $u_0$ to mNV by inverting $\calM$,
\[
u_0 = \calM^{-1} q_0.
\]
Let $u$ be the global mNV solution with this initial data.
Then we define the solution $q$ to NV to be 
\[
q(t) = \calM u(t).
\]
By the continuity properties of $\calM$ and its inverse
it follows that $q \in C(\R,\dot H^{-1}+ L^1)$ and in addition the data to solution map is continuous in the same topology.

We next show that this solution operator is the unique continuous extension of the classical solution operator for more regular data. For this, we need to verify that $L^2 \cap \calM(L^2)$
is dense in $\calM(L^2)$.  This is easily achieved by regularizing $u_0$ instead, approximating it with $H^1$ functions, whose Miura map then belongs to $L^2$.

Lastly, we show that the solutions $q$ to NV satisfy the Strichartz bounds \eqref{NV-Str}. For this we use in the Miura map  the corresponding Strichartz bounds for mNV, which hold by Theorem~\ref{thm:mNV}. We obviously have
\[
\| \partial u\|_{DS^p} \lesssim \|u\|_{S^p},
\]
so it remains to consider the quadratic term, where 
it suffices to estimate
\begin{equation}\label{DSp-u2}
 \| |u|^2\|_{DS^p} \lesssim \|u\|_{\ell^2 S^{2p}}^2.   
\end{equation}
Here the pairs of Strichartz exponents are denoted by $(p,q)$
respectively $(2p,q_1)$ where it is easily seen that 
\begin{equation}\label{qq1}
\frac{2}{q_1}= \frac{1}{q}+ \frac12.
\end{equation}
To prove \eqref{DSp-u2} we consider the standard Littlewood-Paley trichotomy, writing
\[
|u|^2 = \sum_{2\mu < \lambda} u_\mu \bu_\lambda + u_\lambda \bu_\mu 
+  \sum_{\mu \approx \lambda} u_\mu \bu_\lambda: = q_1 + q_2
\]

To bound $q_1$ we note that the summands are frequency localized 
at frequency $\lambda$, therefore using standard square function bounds and Bernstein's inequality we have
\[
\begin{aligned}
\| q_1\|_{DS^p}^2 \lesssim & \ \|q_1\|_{\ell^2 DS^p}^2
\\
\lesssim & \ \sum_\lambda \lambda^{-2+\frac{1}p} \| \sum_{2\mu < \lambda}  u_\mu \bu_\lambda + u_\lambda \bu_\mu\|_{L^p L^q}^2
\\
\lesssim & \ \sum_\lambda \lambda^{-2+\frac{1}p} 
\|u_\lambda \|_{L^{2p} L^{q}}^2
( \sum_{2\mu < \lambda}  \|u_\mu \|_{L^{2p} L^\infty})^2
\\
\lesssim & \ \sum_\lambda \lambda^{-2+\frac{1}{p} + 2(\frac{1}{q_1}-\frac{1}q)} 
\|u_\lambda \|_{L^{2p} L^{q_1}}^2
( \sum_{2\mu < \lambda} \mu^{ \frac{2}{q_1}}  \|u_\mu \|_{L^{2p} L^{q_1}})^2
\\
\lesssim & \ \sum_\lambda \lambda^{-2+\frac{1}{2p} + 2(\frac{1}{q_1}-\frac{1}q)} 
\|u_\lambda \|_{S^{2p}}^2
( \sum_{2\mu < \lambda} \mu^{-\frac{1}{2p} + \frac{2}{q_1}}  \|u_\mu \|_{S^{2p}})^2
\end{aligned}
\]
In the last expression the sum of the exponents of $\lambda$ and $\mu$ vanishes and the exponent of $\mu$ is positive, so we arrive at 
\[
\| q_1\|_{DS^p}^2 \lesssim \sum_\lambda \|u_\lambda \|_{S^{2p}}^2
\| u\|_{\ell^2 S^{2p}}^2 \lesssim \| u\|_{\ell^2 S^{2p}}^4, 
\]
as needed.

To bound $q_1$ we note that the summands are frequency localized 
at frequency $\lesssim \lambda$, therefore using  Bernstein's inequality we have
\[
\begin{aligned}
\| q_2\|_{DS^p} 
\lesssim & \ \sum_{\mu \approx \lambda} \| |D|^{\frac{1}{p}-1} (u_\mu \bu_\lambda)\|_{L^p L^{q}}
\\
\lesssim & \ \sum_{\mu \approx \lambda} \| |D|^{\frac{1}{p}} (u_\mu \bu_\lambda)\|_{L^p L^{\frac{q_1}2}}
\\
\lesssim & \ \sum_{\mu \approx \lambda} \mu^{\frac{1}{2p}} \|u_\mu\|_{L^{2p}L^{q_1}}\lambda^{\frac{1}{2p}} \|u_\lambda\|_{L^{2p}L^{q_1}}
\\
\lesssim & \| u\|_{S^{2p}}^2
\end{aligned}
\]

\end{proof}

\bibliographystyle{amsplain}
\bibliography{mNV.bib}	
	
\end{document}